\DeclarePairedDelimiter\norm{\lvert}{\rvert}
\DeclarePairedDelimiter\inner{\langle}{\rangle}
\newcommand{\Char}{\mathrel{{\rm char}}} 
\newcommand{\divides}[1][]{
\def\DDD{
\mathrel #1\vert
}
\def\DTT{
\mathrel #1\vert
}
\def\DSS{
\,\mathrel \vert \,
}
\def\DSC{
\mathrel \vert 
}
\mathchoice{\DDD}{\DTT}{\DSS}{\DSC}
}
\newcounter{intro}
		\newtheorem{introthm}[intro]{Theorem}
		\newtheorem{thm}[equation]{Theorem}
		\newtheorem{lem}[equation]{Lemma}
		\newtheorem{cor}[equation]{Corollary}
		\newtheorem{prop}[equation]{Proposition}
		\newtheorem{conj}[equation]{Conjecture}
\theoremstyle{remark}
		\newtheorem{rem}[equation]{Remark}
\theoremstyle{definition}
		\newtheorem{exam}[equation]{Example}
\title{An Analog of Nilpotence Arising from Supercharacter Theory}
\author{Shawn T. Burkett}
\address{Department of Mathematical Sciences, Kent State University, Kent,
Ohio 44240, U.S.A.} \email{sburket1@kent.edu}
\date{\today}
\subjclass[2010]{20C15,20D15}
\keywords{supercharacter theory, nilpotent groups, algebra groups}
\begin{document}
\maketitle
\begin{abstract}
The goal of this paper is to generalize several group theoretic concepts such as the center and commutator subgroup, central series, and ultimately nilpotence to a supercharacter theoretic setting, and to use these concepts to show that there can be a strong connection between the structure of a group and the structure of its supercharacter theories. We then use these concepts to show that the upper and lower annihilator series of $J$ can be described in terms of certain central series for the algebra group $G=1+J$ defined by $\mathsf{S}$, when $\mathsf{S}$ is the algebra group supercharacter theory defined by Diaconis--Isaacs.
\end{abstract}
\section{Introduction}In \cite{ID07}, Diaconis and Isaacs defined supercharacter theories of a finite group $G$, objects which approximate the irreducible characters of $G$ while preserving much of the duality between the sets of irreducible characters and conjugacy classes. Their work generalizes that of Andr\'{e} \cite{CA95,CA02} and Yan \cite{NY01}, which aimed to build a suitable set of characters for the groups $\mathrm{UT}_n(\mathbb{F}_q)$ of unipotent upper triangular matrices over finite fields. The problem of classifying the irreducible characters of these groups is well known to be \enquote{wild}; however, their work produced mutually orthogonal characters constant on unions of conjugacy classes, the values of which can be computed using combinatorial formulae. Using supercharacter theory, Diaconis and Isaacs generalized these results to a larger class of groups, called $\mathbb{F}_q$-algebra groups, groups that arise from finite dimensional, associative, nilpotent $\mathbb{F}_q$-algebras. 

Since this time, supercharacter theory has been used to study very specific families of groups, very often related to algebra groups. Examples include the work of Diaconis, Marberg, and Thiem on supercharacter theories of pattern groups \cite{NTPD09,NTEM09}, the work of Andrews, Diaconis--Isaacs, and Marberg on the supercharacter theories of algebra groups  \cite{CA08} and, more generally, the work of Andr\'{e} and Nicol\'{a}s on supercharacter theories of adjoint groups of radical rings \cite{CA08}. Outside of the realm of algebra groups, Hendrickson classified the supercharacter theories of cyclic groups of prime power order \cite{AH08}, and Lewis--Wynn gave several results describing the structure of supercharacter theories of semi-extraspecial groups and groups possessing a Camina pair \cite{camina}. However, the possibility for using supercharacter theory as a tool for studying arbitrary finite groups seems to be relatively unexplored. In this paper, we begin a framework for studying finite groups via their supercharacter theories, by generalizing several classical concepts to this coarser setting. 

As defined in \cite{ID07}, a supercharacter theory $\mathsf{S}$ of a group $G$ is a pair $(\mathcal{X}_{\mathsf{S}},\mathcal{K}_{\mathsf{S}})$, where $\mathcal{X}_{\mathsf{S}}$ is a partition of the set $\mathrm{Irr}(G)$ of irreducible characters of $G$, and $\mathcal{K}_{\mathsf{S}}$ is a partition of $G$ satisfying
\begin{itemize}\openup 5pt
\item $\{1\}\in\mathcal{K}_{\mathsf{S}}$;
\item $\norm{\mathcal{X}_{\mathsf{S}}}=\norm{\mathcal{K}_{\mathsf{S}}}$;
\item for each $X\in\mathcal{X}_{\mathsf{S}}$, there is a character $\xi^{}_X$ whose constituents lie in $X$ which is constant on the parts of $\mathcal{K}_{\mathsf{S}}$.
\end{itemize}
It is shown in \cite{ID07} that each part of $\mathcal{K}_{\mathsf{S}}$ is a union of conjugacy classes, $\{\mathds{1}\}\in\mathcal{X}_{\mathsf{S}}$, and each character $\xi_X$ is (up to a scalar) the character $\sigma_X=\sum_{\psi\in X}\psi(1)\psi$. 

We will write $\mathrm{Cl}(\mathsf{S})$ instead of $\mathcal{K}_{\mathsf{S}}$, and call its elements $\mathsf{S}$-{\bf classes}. We often refer to $\mathsf{S}$-classes as {\bf superclasses} if there is no ambiguity. For $g\in G$, we will let $\mathrm{cl}_{\mathsf{S}}(g)$ denote the $\mathsf{S}$-class containing $G$. Similarly, the set $\{\sigma_X:X\in\mathcal{X}_{\mathsf{S}}\}$ will be denoted $\mathrm{Ch}(\mathsf{S})$, and its elements will be called $\mathsf{S}$-{\bf characters} or {\bf supercharacters}. For any character $\chi$ of $G$, we will let $\mathrm{Irr}(\chi)$ denote the set of irreducible constituents of $\chi$. We also let $\mathrm{SCT}(G)$ denote the set of all supercharacter theories of $G$.

Let $\preccurlyeq$ be the refinement order on partitions, and let $\vee$ be the mutual coarsening operation on partitions. Then
\[(\mathcal{X}\vee\mathcal{Y},\mathcal{K}\vee\mathcal{L})\]
is a supercharacter theory of $G$ for any supercharacter theories $\mathsf{S}=(\mathcal{X},\mathcal{K})$ and $\mathsf{T}=(\mathcal{Y},\mathcal{L})$ of $G$ \cite[Proposition 3.3]{AH12}. We write $\mathsf{S}\vee\mathsf{T}$ to denote this supercharacter theory. From \cite[Corollary 3.4]{AH12}, we have that every $\mathsf{S}$-class is  union of $\mathsf{T}$-classes if and only if every $\mathsf{S}$-character is a sum of $\mathsf{T}$-characters. This fact allows one to extend $\preccurlyeq$ to a partial order on $\mathrm{SCT}(G)$ by defining $\mathsf{S}\preccurlyeq\mathsf{T}$ if $\mathrm{Cl}(\mathsf{S})\preccurlyeq\mathrm{Cl}(\mathsf{T})$. In this event, we say that $\mathsf{T}$ is finer than $\mathsf{S}$, and that $\mathsf{S}$ is coarser than $\mathsf{T}$. Also, $\mathrm{SCT}(G)$ has a minimal element $\mathsf{m}(G)$, where the $\mathsf{m}(G)$-classes are just the usual conjugacy classes of $G$, and the $\mathsf{m}(G)$-characters are the characters $\chi(1)\chi$ for $\chi\in\mathrm{Irr}(G)$. In particular, $\mathrm{SCT}(G)$ is a lattice, and we let $\wedge$ denote its meet operation. The maximal element of $\mathrm{SCT}(G)$ is the supercharacter theory with  superclasses $\{1\}$ and $G\setminus\{1\}$, and supercharacters $\mathds{1}$ and $\rho_G-\mathds{1}$, where $\rho_G$ denotes the regular character of $G$. We denote this supercharacter theory by $\mathsf{M}(G)$.

A subgroup $N\le G$ is called $\mathsf{S}$-{\bf normal} if $N$ is the union of some $\mathsf{S}$-classes of $G$. Equivalently $N$ is $\mathsf{S}$-normal if and only if $N$ arises as the kernel of some sum of $\mathsf{S}$-characters \cite[Proposition 2.1]{EM11}. We will use the notation $N\lhd_{\mathsf{S}}G$ to denote that $N$ is an $\mathsf{S}$-normal subgroup of $G$. Hendrickson \cite[Proposition 6.4]{AH12} showed  that there is an induced supercharacter theory $\mathsf{S}_N$ on $N$ and $\mathsf{S}^{G/N}$ on $G/N$ whenever $N\lhd_{\mathsf{S}}G$, where
\begin{align*}
\mathrm{Ch}(\mathsf{S}_N)&=\bigl\{\sigma_{\mathrm{Irr}(\mathrm{Res}^G_N(\chi))}:\chi\in\mathrm{Ch}(\mathsf{S})\bigr\}\\
\mathrm{Cl}(\mathsf{S}_N)&=\bigl\{K\in\mathrm{Cl}(\mathsf{S}):K\subseteq N\bigr\}
\end{align*}
and
\begin{align*}
\mathrm{Ch}(\mathsf{S}^{G/N})&=\bigl\{\mathrm{Def}^G_{G/N}(\chi):\chi\in\mathrm{Ch}(\mathsf{S})\bigr\}\\
\mathrm{Cl}(\mathsf{S}^{G/N})&=\bigl\{\pi(K):K\in\mathrm{Cl}(\mathsf{S})\bigr\},
\end{align*}
where $\pi:G\to G/N$ is the canonical projection. 

\begin{introthm}\label{degreediv}
Let $\mathsf{S}$ be a supercharacter theory of $G$, and let $N$ be $\mathsf{S}$-normal. The following hold.
\begin{enumerate}[label={\bf(\arabic*)}]\openup5pt
\item For each $\chi\in\mathrm{Ch}(\mathsf{S})$ and $\psi\in\mathrm{Ch}(\mathsf{S}_N)$ satisfying $\inner{\psi,\mathrm{Res}^G_N(\chi)}>0$, $\psi(1)$ divides $\chi(1)$. In particular, $\mathrm{Res}^G_N(\chi)$ is a positive integer multiple of an $\mathsf{S}_N$ character.
\item For each $g\in G$, $\norm{\mathrm{cl}_{\mathsf{S}^{G/N}}(gN)}$ divides $\norm{\mathrm{cl}_{\mathsf{S}}(g)}$.
\end{enumerate}
\end{introthm}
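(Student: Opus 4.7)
I would treat (1) and (2) as complementary duality statements: for (1), expand $\mathrm{Res}^G_N(\chi)$ in the supercharacter basis of $\mathrm{cf}(\mathsf{S}_N)$ and upgrade the coefficient to a positive integer via Clifford's theorem; for (2), dualize by applying a fiber-sum operation to superclass indicator functions.

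For (1), note that $\mathrm{Res}^G_N(\chi) \in \mathrm{cf}(\mathsf{S}_N)$ since $\chi$ is $\mathsf{S}$-class and every $\mathsf{S}_N$-class is contained in a single $\mathsf{S}$-class. The set $\{\sigma_Y : Y \in \mathcal{X}_{\mathsf{S}_N}\}$ is linearly independent (pairwise disjoint constituents) and of the correct cardinality $\dim\mathrm{cf}(\mathsf{S}_N)$, so it forms a basis. Let $Y_0 := \mathrm{Irr}(\mathrm{Res}^G_N(\chi))$, which is a single part of $\mathcal{X}_{\mathsf{S}_N}$ by the definition of $\mathrm{Ch}(\mathsf{S}_N)$. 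Expanding $\mathrm{Res}^G_N(\chi)$ in the $\sigma_Y$-basis and comparing the multiplicity of any $\eta \in Y$ forces $c_Y = 0$ for $Y \neq Y_0$, so $\mathrm{Res}^G_N(\chi) = c\sigma_{Y_0}$ for some $c \geq 0$, and any $\psi \in \mathrm{Ch}(\mathsf{S}_N)$ with $\inner{\psi, \mathrm{Res}^G_N(\chi)} > 0$ must equal $\sigma_{Y_0}$.

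The crux of (1) is showing $c$ is a positive integer. Writing $\chi = \sigma_X = \sum_{\varphi \in X}\varphi(1)\varphi$ and fixing any $\eta \in Y_0$, Clifford's theorem gives $\eta(1) \mid \varphi(1)$ whenever $\inner{\mathrm{Res}^G_N(\varphi), \eta} > 0$. Dividing the identity
\[
c\,\eta(1) = \inner{\eta, \mathrm{Res}^G_N(\chi)} = \sum_{\varphi \in X}\varphi(1)\inner{\mathrm{Res}^G_N(\varphi), \eta}
\]
by $\eta(1)$ then writes $c$ as a sum of products of nonnegative integers, and $c > 0$ because $\eta \in Y_0$ forces at least one nonzero summand. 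Then $\chi(1) = c\psi(1)$ gives $\psi(1) \mid \chi(1)$.

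For (2), set $K = \mathrm{cl}_\mathsf{S}(g)$, $\bar K = \pi(K)$, and consider the fiber-sum map $F \colon \mathrm{cf}(G) \to \mathrm{cf}(G/N)$ defined by $(F\phi)(hN) = \sum_{g \in hN}\phi(g)$. Then $(F\mathds{1}_K)(hN) = |K \cap hN|$ and $|K| = \sum_{\bar h \in \bar K}(F\mathds{1}_K)(\bar h)$, so it suffices to show $F\mathds{1}_K$ is constant on $\bar K$. Since $\mathds{1}_K \in \mathrm{cf}(\mathsf{S})$ and this space has basis $\{\sigma_X\}$, it is enough to show $F\sigma_X \in \mathrm{cf}(\mathsf{S}^{G/N})$ for each $X$; but a direct computation gives $F\sigma_X = |N|\,\mathrm{Def}^G_{G/N}(\sigma_X)$, which lies in $\mathrm{cf}(\mathsf{S}^{G/N})$ because $\mathrm{Def}^G_{G/N}(\sigma_X) \in \mathrm{Ch}(\mathsf{S}^{G/N})$ by the definition of $\mathsf{S}^{G/N}$. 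Hence $F\mathds{1}_K$, supported in $\bar K$ and constant there, equals $d\mathds{1}_{\bar K}$ for some positive integer $d$, giving $|K| = d|\bar K|$. I expect the main obstacle to be the integrality step in (1): the character equality $\mathrm{Res}^G_N(\chi) = c\sigma_{Y_0}$ by itself only forces $c\,\eta(1) \in \mathbb{Z}$ for each $\eta \in Y_0$, so Clifford's theorem does essential work in lifting this to $c \in \mathbb{Z}$.
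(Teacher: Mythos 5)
Your proof is correct, and both halves take a genuinely different route from the paper's. For part (1), the paper first quotes Hendrickson for the fact that $\mathrm{Res}^G_N(\chi)=\alpha\psi$ with $\alpha=\chi(1)/\psi(1)\in\mathbb{Q}$, then identifies $\alpha$ by computing $\mathrm{Ind}^G_N\mathrm{Res}^G_N(\chi)$ in two different ways (an explicit Clifford-theoretic expansion on one side, the auxiliary lemma evaluating $\mathrm{Ind}_N^G(\psi)$ on the other) and only at the end observes that $\alpha=\sum_{\xi}t_1e_{1,\xi}^2\in\mathbb{Z}$. You arrive at the same formula $c=\sum_{\varphi}t_\varphi e_\varphi^2$ in one step by deriving $\mathrm{Res}^G_N(\chi)=c\,\sigma_{Y_0}$ from the basis expansion and pairing against a single fixed $\eta\in Y_0$; this bypasses the induction--restriction computation and the auxiliary lemma entirely, at the cost of not producing the (independently informative) supercharacter decomposition of $\mathrm{Ind}^G_N\mathrm{Res}^G_N(\chi)$ that the paper obtains along the way. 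For part (2), the paper partitions $\mathrm{cl}_{\mathsf{S}}(g)$ into its fibers over cosets of $N$ and outsources the key fact --- that all nonempty fibers have the same size --- to an external citation; your fiber-sum map $F$ proves exactly that fact from first principles via the identity $F\sigma_X=\norm{N}\,\mathrm{Def}^G_{G/N}(\sigma_X)$ together with the duality axiom that deflated supercharacters are constant on the classes $\pi(K)$, so your argument is self-contained where the paper's is not. The only point to polish is that $\mathrm{Def}^G_{G/N}(\sigma_X)$ is the zero function when no constituent of $\sigma_X$ contains $N$ in its kernel, so it is not literally an element of $\mathrm{Ch}(\mathsf{S}^{G/N})$ in that case; it still lies in the span of $\mathrm{Ch}(\mathsf{S}^{G/N})$, which is all you use, so the argument goes through unchanged.
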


Given a supercharacter theory $\mathsf{S}$ of $G$, we define analogs of the center and commutator subgroup of $G$ as follows. The analog $\mathbf{Z}(\mathsf{S})$ of the center consists of the elements of $G$ that lie in $\mathsf{S}$-classes of size one, and the fact that this set forms a group is well known. For $H\le G$, we let $[H,\mathsf{S}]$ be the subgroup generated by all elements of the form $g^{-1}k$, where $g\in H$ and $k$ lies in the same $\mathsf{S}$-class as $g$. We show that $H$ is $\mathsf{S}$-normal if and only if $[H,\mathsf{S}]\le H$, and in this event $[H,\mathsf{S}]$ is $\mathsf{S}$-normal as well.

We use these subgroups to give a new type of central series in the following way. We say that a series 
\[G=N_1\ge N_2\ge\dotsb\ge N_{r+1}=1\]
of $\mathsf{S}$-normal subgroups is an $\mathsf{S}$-{\bf central series} if
\[N_i/N_{i+1}\le\mathbf{Z}(\mathsf{S}^{G/N_{i+1}})\]
for all $1\le i\le r$. Observe that the name is appropriate since $\mathbf{Z}(\mathsf{S}^{G/N_{i+1}})$ is a central subgroup of $G/N_{i+1}$. We can also define direct analogs of the lower and upper central series of $G$. We define the {\bf lower} $\mathsf{S}$-central series by \[\gamma_i(\mathsf{S})=[\gamma_{i-1}(\mathsf{S}),\mathsf{S}],\] where $\gamma_1(\mathsf{S})=G$ and the {\bf upper} $\mathsf{S}$-central series by $\zeta_0(\mathsf{S})=1$ and \[\zeta_i(\mathsf{S})/\zeta_{i-1}(\mathsf{S})=\mathbf{Z}(\mathsf{S}^{G/\zeta_{i-1}(\mathsf{S})}).\] When they terminate appropriately, these are in fact $\mathsf{S}$-central series.

In the event that $G$ has an $\mathsf{S}$-central series, we will say that $G$ is $\mathsf{S}$-{\bf nilpotent}. Such groups are necessarily nilpotent, but being $\mathsf{S}$-nilpotent for some supercharacter theory $\mathsf{S}$ is more restrictive. 
\begin{introthm}\label{cc}
Let $\mathsf{S}$ be a supercharacter theory of $G$ for which $G$ is $\mathsf{S}$-nilpotent. Then $\chi(1)$  divides $\norm{G:\ker(\chi)}$ for each $\chi\in\mathrm{Ch}(\mathsf{S})$, and $\norm{\mathrm{cl}_{\mathsf{S}}(g)}$ divides $\norm{G}$ for each $g\in G$.
\end{introthm}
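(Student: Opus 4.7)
I would prove both statements by induction on $\norm{G}$, with the base case $G=1$ trivial. The core inductive step begins by choosing a minimal nontrivial $\mathsf{S}$-normal subgroup $N$ of $G$; the first task is to show $N\le\mathbf{Z}(\mathsf{S})$. Since $[N,\mathsf{S}]$ is $\mathsf{S}$-normal and contained in $N$ (by the $\mathsf{S}$-normality criterion stated just before Theorem~\ref{cc}), minimality of $N$ forces $[N,\mathsf{S}]\in\{1,N\}$; the alternative $[N,\mathsf{S}]=N$ would give $N\subseteq\gamma_k(\mathsf{S})$ for every $k$, contradicting termination of the lower $\mathsf{S}$-central series. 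Once $N\le\mathbf{Z}(\mathsf{S})$ is in hand, the theory $\mathsf{S}_N$ has only singleton classes, so $\mathsf{S}_N=\mathsf{m}(N)$ and its supercharacters are the linear characters of the abelian group $N$; Theorem~\ref{degreediv}(1) then gives $\mathrm{Res}^G_N(\chi)=\chi(1)\eta$ for each $\chi\in\mathrm{Ch}(\mathsf{S})$ and some $\eta=\eta_\chi\in\mathrm{Irr}(N)$. Decomposing $\chi=\sigma_X$ into irreducible constituents shows each $\psi\in X$ also satisfies $\psi|_N=\psi(1)\eta$, whence
\[\chi(gn)=\eta(n)\chi(g)\qquad\text{for all }g\in G\text{ and }n\in N.\]

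\emph{Part (2).} The case $g\in N$ is immediate since $\norm{\mathrm{cl}_\mathsf{S}(g)}=1$. For $g\notin N$, the product identity above shows that $g\sim_{\mathsf{S}} g'$ implies $gn\sim_{\mathsf{S}}g'n$ (both sides take the same value on every supercharacter), so right multiplication by $N$ is a well-defined action on $\mathrm{Cl}(\mathsf{S})$. Setting $K=\mathrm{cl}_\mathsf{S}(g)$ and $T=\mathrm{Stab}_N(K)\le N$, an orbit-counting argument shows every fiber of the map $K\twoheadrightarrow\mathrm{cl}_{\mathsf{S}^{G/N}}(gN)$ has size $\norm{T}$, giving $\norm{K}=\norm{T}\cdot\norm{\mathrm{cl}_{\mathsf{S}^{G/N}}(gN)}$. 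Since $\norm{T}$ divides $\norm{N}$ and the inductive hypothesis applied to the $\mathsf{S}^{G/N}$-nilpotent group $G/N$ (of smaller order) gives $\norm{\mathrm{cl}_{\mathsf{S}^{G/N}}(gN)}\mid\norm{G/N}$, one concludes $\norm{K}\mid\norm{N}\cdot\norm{G/N}=\norm{G}$.

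\emph{Part (1).} First reduce to $\chi$ faithful: if $\ker(\chi)\ne 1$, then $G/\ker(\chi)$ is $\mathsf{S}^{G/\ker(\chi)}$-nilpotent of smaller order and $\chi$ deflates to a faithful supercharacter of the same degree, so induction gives $\chi(1)\mid\norm{G:\ker(\chi)}$. Assume $\chi$ is faithful; then $\ker(\eta)=N\cap\ker(\chi)=1$, so $\eta$ is a faithful linear character of the (cyclic) group $N$. The block $X$ with $\chi=\sigma_X$ is contained in $\mathrm{Irr}(G|\eta)$, which is itself a union of $\mathsf{S}$-blocks (by the same restriction argument applied blockwise), and hence $\mathrm{Ind}^G_N(\eta)=\sum_j\sigma_{X_j}$ decomposes into $\mathsf{S}$-supercharacters with $\sum_j\sigma_{X_j}(1)=\norm{G:N}$ and $\chi=\sigma_{X_{j_0}}$ for some $j_0$.

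\emph{Main obstacle.} The crux is to show $\chi(1)=\sigma_{X_{j_0}}(1)$ divides $\norm{G:N}$, whence $\chi(1)\mid\norm{G}$. One cannot simply identify $\chi$ with $\mathrm{Ind}^G_N(\eta)$, since small examples (e.g.\ $G=\mathbb{Z}/8$ with the $\{\pm 1\}$-orbit supercharacter theory) show that $\chi$ may be only a proper constituent. I expect this divisibility to follow from a rigidity forced by $\mathsf{S}$-nilpotence: either an equal-degree phenomenon making all $\sigma_{X_j}(1)$ coincide (via a symmetry permuting the blocks $X_j$ inside $\mathrm{Irr}(G|\eta)$), or a Burnside-style algebraic-integer computation on the supercharacter orthogonality relation $\sum_K\norm{K}\norm{\sigma_X(K)}^2/\sigma_X(1)^2=\norm{G}/\sigma_X(1)$, using the $N$-orbit structure from Part~(2) to group $\mathsf{S}$-classes and the identity $\chi(gn)=\eta(n)\chi(g)$ to reduce the sum to integer factors dividing $\norm{N}$ and $\norm{G/N}$.
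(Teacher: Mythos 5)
Your treatment of the class-size statement is sound and essentially matches the paper's: the paper proves that $\norm{\mathrm{cl}_{\mathsf{S}}(g)}/\norm{\mathrm{cl}_{\mathsf{S}_{G/N}}(gN)}$ divides $\norm{N}$ for any $\mathsf{S}$-normal $N\le\mathbf{Z}(\mathsf{S})$ (Proposition~\ref{otherclassdivide}) via exactly your orbit/stabilizer count, and then inducts through $\mathbf{Z}(\mathsf{S})$ itself (minimality of $N$ is not needed). Likewise your reduction of the $\norm{G:\ker(\chi)}$ statement to the faithful case by deflating to $G/\ker(\chi)$, which is again $\mathsf{S}_{G/\ker(\chi)}$-nilpotent, is precisely the paper's use of Proposition~\ref{baer}.

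The gap is the one you flagged yourself: you never prove that $\chi(1)$ divides $\norm{G}$. Descending through a minimal central $\mathsf{S}$-normal subgroup $N$ leaves you needing $\sigma_{X_{j_0}}(1)\mid\norm{G:N}$ for one block of $\mathrm{Ind}_N^G(\eta)$, and neither of your proposed rescues is carried out; as you observe, there is no visible group action permuting the blocks $X_j$ inside $\mathrm{Irr}(G\mid\eta)$ when $N$ is merely central, so the "equal-degree phenomenon" is unsubstantiated. The paper avoids this by running the degree induction through the commutator rather than the center: set $H=[G,\mathsf{S}]<G$, obtain $\psi(1)\mid\norm{H}$ for the $\mathsf{S}_H$-constituent $\psi$ of $\mathrm{Res}^G_H(\chi)$ from the inductive hypothesis (using $H$ is $\mathsf{S}_H$-nilpotent), and then show $\chi(1)/\psi(1)$ divides $\norm{G:H}$ (Proposition~\ref{actiondivide}). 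The point is that relative to $H=[G,\mathsf{S}]$ the symmetry you were hoping for genuinely exists: $\mathrm{Ch}(\mathsf{S}/H)=\mathrm{Irr}(G/H)$ is a group of linear characters acting on $\mathrm{Ch}(\mathsf{S})$ by multiplication (Lemma~\ref{linearaction}), the constituents of $\mathrm{Ind}_H^G(\psi)$ form the $\Lambda$-orbit of $\mathrm{Irr}(\chi)$ under this action, and computing the degree of that set of constituents in two ways yields $\chi(1)=\norm{\mathrm{Stab}_{\Lambda}(\chi)}\psi(1)$ with $\norm{\mathrm{Stab}_{\Lambda}(\chi)}$ dividing $\norm{G:H}$. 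To salvage your outline, replace your minimal central $N$ by $[G,\mathsf{S}]$ in Part (1) and supply this orbit computation; Theorem~\ref{degreediv}(1) alone, which you invoke, only gives integrality of $\chi(1)/\psi(1)$, not the required divisibility into $\norm{G:N}$.
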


We remark here that it is not known if a converse of Theorem~\ref{cc} holds in general, but we will discuss this possibility more in Section~\ref{nilpsection}.  

Of particular interest are $p$-groups, as $\mathsf{S}$-nilpotence has a particularly nice characterization in this case. Specifically, a converse of Theorem~\ref{cc} holds in this case.
\begin{introthm}\label{intropgp}
Let $G$ be a $p$-group, and let $\mathsf{S}$ be a supercharacter theory of $G$. The following are equivalent:
\begin{enumerate}[label = {\bf(\arabic*)}]\openup 5pt
\item the group $G$ is $\mathsf{S}$-nilpotent;
\item the degree of $\chi$ divides $\norm{G}$ for every $\chi\in\mathrm{Ch}(\mathsf{S})$;
\item the size of $\mathrm{cl}_{\mathsf{S}}(g)$ divides $\norm{G}$ for every $g\in G$.
\end{enumerate}
\end{introthm}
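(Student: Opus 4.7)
The implications $(1)\Rightarrow(2)$ and $(1)\Rightarrow(3)$ follow from Theorem~\ref{cc}: $(3)$ is stated there, and $(2)$ follows since $|G\!:\!\ker(\chi)|$ divides $|G|$ when $G$ is a $p$-group. The substantive content is the two converses.

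For $(3)\Rightarrow(1)$, I would argue by induction on $|G|$. The key step is to produce a nontrivial element of $\mathbf{Z}(\mathsf{S})$ when $G>1$. Starting from the superclass equation
\[
|G| \;=\; 1+\sum_{K\in\mathrm{Cl}(\mathsf{S}),\,K\neq\{1\}}|K|,
\]
hypothesis $(3)$ forces each $|K|$ to be a $p$-power. Reducing modulo $p$ shows that not every summand can be divisible by $p$, so some $K\neq\{1\}$ satisfies $|K|=1$; this gives the required nontrivial element of $\mathbf{Z}(\mathsf{S})$. Setting $Z=\mathbf{Z}(\mathsf{S})$, Theorem~\ref{degreediv}(2) ensures that $(3)$ is inherited by $\mathsf{S}^{G/Z}$. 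The induction hypothesis then yields an $\mathsf{S}^{G/Z}$-central series on $G/Z$, which pulls back through the canonical projection to a chain of $\mathsf{S}$-normal subgroups of $G$; appending $Z\ge 1$ produces an $\mathsf{S}$-central series for $G$.

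For $(2)\Rightarrow(1)$, I would attempt the parallel induction. The decomposition $|G|=\sum_X\sigma_X(1)$ combined with $(2)$ yields a nontrivial linear $\mathsf{S}$-character $\lambda$, giving a proper $\mathsf{S}$-normal subgroup $\ker(\lambda)$ with cyclic $p$-group quotient; however, this does not directly expose an element of $\mathbf{Z}(\mathsf{S})$. The natural route is therefore to establish $(2)\Rightarrow(3)$ and reduce to the previous part. This I would attack via the column-orthogonality relation
\[
\sum_{X\in\mathcal{X}_{\mathsf{S}}}\frac{|\sigma_X(g)|^2}{\sigma_X(1)} \;=\; \frac{|G|}{|\mathrm{cl}_{\mathsf{S}}(g)|},
\]
using the elementary fact that a $p$-power value of $\sigma_X(1)=\sum_{\psi\in X}\psi(1)^2$ forces all $\psi\in X$ to share a common ($p$-power) degree and $|X|$ itself to be a $p$-power.

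The main obstacle is precisely this arithmetic bridge. The individual summands $|\sigma_X(g)|^2/\sigma_X(1)$ are in general real algebraic but not rational, yet their sum is the rational number $|G|/|\mathrm{cl}_{\mathsf{S}}(g)|$; one must track $p$-adic information carefully enough to conclude that this sum is a $p$-power integer. Once this is secured, $(2)\Rightarrow(3)\Rightarrow(1)$ closes the loop.
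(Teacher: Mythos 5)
Your treatment of $(1)\Rightarrow(2),(3)$ and of $(3)\Rightarrow(1)$ is correct and is essentially the paper's argument: the superclass equation modulo $p$ forces $\mathbf{Z}(\mathsf{S})>1$, the hypothesis passes to $\mathsf{S}^{G/Z}$ because quotient superclass sizes divide the originals, and induction builds the upper $\mathsf{S}$-central series.

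The implication $(2)\Rightarrow(1)$, however, is not proved. You correctly extract from $\norm{G}=\sum_\chi\chi(1)$ that there are nontrivial linear $\mathsf{S}$-characters, but then abandon that route because it ``does not directly expose an element of $\mathbf{Z}(\mathsf{S})$,'' and instead propose to prove $(2)\Rightarrow(3)$ via column orthogonality --- a step you yourself flag as an unresolved ``arithmetic bridge.'' That bridge is indeed not something you can cross cheaply (the summands $\norm{\sigma_X(g)}^2/\sigma_X(1)$ carry no obvious $p$-adic control), and the paper never attempts it. The missing idea is to descend the \emph{lower} $\mathsf{S}$-central series rather than ascend the upper one: the degree equation
\[
\norm{G}=\norm{G:[G,\mathsf{S}]}+\sum_{\chi\in\mathrm{Ch}(\mathsf{S}\mid[G,\mathsf{S}])}\chi(1)
\]
together with $(2)$ (each nonlinear $\chi(1)$ is a $p$-power $>1$) shows $p$ divides $\norm{G:[G,\mathsf{S}]}$, so $[G,\mathsf{S}]<G$. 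Hypothesis $(2)$ is then inherited by $H=[G,\mathsf{S}]$ with the theory $\mathsf{S}_H$, because Theorem~\ref{degreediv}{\bf(1)} makes every $\mathsf{S}_H$-character degree divide some $\mathsf{S}$-character degree, hence a $p$-power, hence a divisor of $\norm{H}$. Iterating, the lower $\mathsf{S}$-central series strictly descends to $1$, which is condition {\bf(2)} of Proposition~\ref{nilclassification}. Replacing your column-orthogonality detour with this induction closes the gap; as written, the proposal establishes only $(1)\Leftrightarrow(3)$ and $(1)\Rightarrow(2)$.
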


In particular, if $G$ is an $\mathbb{F}_q$-algebra group and $\mathsf{S}$ is the {\it double orbit} supercharacter theory of Diaconis--Issacs, then it follows from \cite[Corollary 3.2]{ID07} that $G$ is $\mathsf{S}$-nilpotent. We will show that if $G=1+J$ is an $\mathbb{F}_q$-algebra group, then the terms in the lower and upper $\mathsf{S}$-central series are ideal subgroups of $G$, that is, subgroups of the form $1+I$, where $I$ is an ideal of $J$. Moreover, we show that the associated ideals of the lower and upper $\mathsf{S}$-central series are the terms of the lower and upper annihilator series of $J$, respectively. 

\begin{introthm}\label{idealsubs}
Let $\mathsf{S}$ be the double orbit supercharacter theory for the algebra group $G$, and assume $\norm{G}=q^n$. For each $0\le m\le n$, $G$ has an $\mathsf{S}$-normal ideal subgroup of order $q^m$.
\end{introthm}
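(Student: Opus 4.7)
The plan is to construct the required ideal subgroups directly from a chain of two-sided ideals in $J$, relying on two essentially independent facts: that ideal subgroups are automatically $\mathsf{S}$-normal in the Diaconis--Isaacs double orbit theory, and that a finite-dimensional nilpotent $\mathbb{F}_q$-algebra admits a full flag of two-sided ideals.

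First I would verify the $\mathsf{S}$-normality claim. If $I$ is a two-sided ideal of $J$, then $1+I$ is a subgroup of $G$, because the nilpotence of $J$ makes $(1+a)^{-1}=1-a+a^2-\dotsb$ terminate inside $1+I$. For $\mathsf{S}$-normality, recall that in the double orbit supercharacter theory the superclass of $1+j$ is $\bigl\{1 + hjk : h,k \in G\bigr\}$, with the products computed in the unital overalgebra $\mathbb{F}_q \oplus J$. Since $G \subseteq \mathbb{F}_q \oplus J$ and $I$ is stable under left and right multiplication by every element of $\mathbb{F}_q \oplus J$, the element $hjk$ lies in $I$ whenever $j \in I$. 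Hence $1+I$ is a union of $\mathsf{S}$-classes, i.e., $\mathsf{S}$-normal.

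Next I would produce, for each $0 \le m \le n$, a two-sided ideal of $J$ of $\mathbb{F}_q$-dimension $m$. The idea is to refine the descending filtration $J \supseteq J^2 \supseteq \dotsb \supseteq J^r = 0$ layer by layer. Because $J \cdot J^i + J^i \cdot J \subseteq J^{i+1}$, each quotient $J^i/J^{i+1}$ is annihilated by $J$ on both sides, so every $\mathbb{F}_q$-subspace of $J^i/J^{i+1}$ is automatically a sub-$J$-bimodule. The correspondence theorem then yields an ideal of $J$ of every intermediate dimension between $\dim_{\mathbb{F}_q} J^{i+1}$ and $\dim_{\mathbb{F}_q} J^i$. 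Since $\dim_{\mathbb{F}_q} J = n$ (as $|G|=|J|=q^n$), splicing these refinements gives a chain of ideals $0 = I_0 \subset I_1 \subset \dotsb \subset I_n = J$ with $\dim_{\mathbb{F}_q} I_m = m$, and the subgroups $1 + I_m$ are then the required $\mathsf{S}$-normal ideal subgroups of order $q^m$.

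I do not foresee any substantive obstacle: the $\mathsf{S}$-normality of ideal subgroups and the existence of a full ideal flag are both close to tautological once the correct framework is in place. The one point meriting care is in the first step, where one must confirm that the double orbit of $j \in I$, computed in $\mathbb{F}_q \oplus J$, truly lies in $I$ rather than merely in $\mathbb{F}_q \oplus I$; this is immediate from expanding $(1+g')j(1+h')$ with $g',h' \in J$ and using that $I$ is a two-sided ideal of $J$.
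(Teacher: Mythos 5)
Your proof is correct, and it reaches the conclusion by a route that is parallel to, but independent of, the paper's. The paper filters $J$ by the \emph{upper annihilator series}: it invokes Theorem~\ref{upperloweralgebra} to identify $\zeta_i(\mathsf{S})=1+\mathrm{Ann}_i(J)$, locates $q^m$ between $\norm{\zeta_i(\mathsf{S})}$ and $\norm{\zeta_{i+1}(\mathsf{S})}$, and chooses an $\mathbb{F}_q$-subspace $M/\mathrm{Ann}_i(J)$ of $\mathrm{Ann}_{i+1}(J)/\mathrm{Ann}_i(J)$ of the right size; since that quotient lies in $\mathrm{Ann}(J/\mathrm{Ann}_i(J))$, the subspace $M$ is an ideal and $1+M\lhd_{\mathsf{S}}G$ by Lemma~\ref{algebragps}. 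You instead filter by the lower series $J\supseteq J^2\supseteq\dotsb$ and refine each layer $J^i/J^{i+1}$, which is likewise a trivial $J$-bimodule, so every intermediate $\mathbb{F}_q$-subspace is again an ideal of $J$. The underlying mechanism --- between consecutive terms of an annihilator series, every $\mathbb{F}_q$-subspace is an ideal --- is identical in the two arguments, as is the normality verification: your first paragraph reproves Lemma~\ref{algebragps}(1), and your closing remark about computing $(1+g')j(1+h')$ inside $\mathbb{F}_q\oplus J$ is exactly the right point to check (note also that an ideal of $J$ which is an $\mathbb{F}_q$-subspace is automatically an ideal of $A=\mathbb{F}_q+J$, so your subgroups are ideal subgroups in the paper's sense). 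What your version buys is independence from the supercharacter-theoretic machinery: you need neither the $\mathsf{S}$-central series nor Theorem~\ref{upperloweralgebra}, only elementary ring theory plus the direct superclass computation. What the paper's version buys is that it realizes the flag as a refinement of the series $\zeta_i(\mathsf{S})$ already shown to consist of $\mathsf{S}$-normal ideal subgroups, which ties the statement to the central-series framework that is the subject of the section.
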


This work was a part of the author's PhD thesis at the University of Colorado Boulder under the supervision of Nat Thiem. I would to thank Dr. Thiem for his numerous helpful conversations and suggestions.
\section{$\mathsf{S}$-normality}\label{snormality}

In this section, we give an overview of supernormality, as well as the supercharacter theories $\mathsf{S}_N$ and $\mathsf{S}^{G/N}$ defined by Hendrickson (see \cite{AH12}). 

Let $N$ be $\mathsf{S}$-normal. We will set the following notation. Let $\mathsf{S}$ be a supercharacter theory of $G$, and let $N$ be $\mathsf{S}$-normal. We will let 
\begin{align*}
\mathrm{Ch}(\mathsf{S}\mid N)&=\{\chi\in\mathrm{Ch}(\mathsf{S}): N\nsubseteq\ker(\chi)\}\\
\shortintertext{and}
\mathrm{Ch}(\mathsf{S}/N)&=\{\chi\in\mathrm{Ch}(\mathsf{S}): N\subseteq\ker(\chi)\}.
\end{align*}
For a character $\chi$ of $G$ afforded by a $G$-module $V$, we let $\mathrm{Def}^G_{G/N}(\chi)$ denote the character afforded by the $G/N$-module consisting of the $N$-fixed points of $V$. In the event that $N\le\ker(\chi)$, $\mathrm{Def}^G_{G/N}(\chi)(gN)=\chi(g)$, and we make no distinction between the two. That is, we typically identity characters of $G$ that contain $N$ in their kernels with characters of $G/N$. 

Let $H$ be a normal subgroups of $G$, and let $N$ be a normal subgroup of $G$ contained in $H$. Since $\mathrm{Def}^H_{H/N}\mathrm{Res}^G_H(\chi)=\mathrm{Res}^{G/N}_{H/N}\mathrm{Def}^G_{G/N}(\chi)$ for any $\chi\in\mathrm{Irr}(G/N)$, it follows that 
\[(\mathsf{S}_N)^{H/N}=(\mathsf{S}^{G/N})_{H/N}\]
for any $\mathsf{S}$-normal subgroups $N\le H$. Consequently, we shall henceforth write $\mathsf{S}_{H/N}$ to denote the supercharacter theory induced by $\mathsf{S}$ on the quotient $H/N$.

As with normal subgroups, $\mathsf{S}$-normal subgroups respect quotient structures. 
\begin{lem}\label{latticeisothm}
Let $N$ be $\mathsf{S}$-normal in $G$, and let $M\le G$ contain $N$. Then $M$ is $\mathsf{S}$-normal if and only if $M/N$ is $\mathsf{S}_{G/N}$-normal
\end{lem}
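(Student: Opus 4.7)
The plan is to prove both directions by directly manipulating the partition descriptions of the induced supercharacter theory $\mathsf{S}_{G/N}$. Recall that $\mathrm{Cl}(\mathsf{S}_{G/N}) = \{\pi(K) : K \in \mathrm{Cl}(\mathsf{S})\}$ where $\pi : G \to G/N$ is the natural projection, so $M/N$ being $\mathsf{S}_{G/N}$-normal simply means $M/N$ is a union of sets of the form $\pi(K)$ with $K$ an $\mathsf{S}$-class of $G$. The symmetry between saturation of $M$ by $\mathsf{S}$-classes in $G$ and saturation of $M/N$ by their images in $G/N$ is what drives the equivalence.

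For the forward direction, suppose $M$ is $\mathsf{S}$-normal and write $M = \bigcup_i K_i$ with each $K_i \in \mathrm{Cl}(\mathsf{S})$. Applying $\pi$ gives $M/N = \pi(M) = \bigcup_i \pi(K_i)$, exhibiting $M/N$ as a union of $\mathsf{S}_{G/N}$-classes.

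For the backward direction, assume $M/N = \bigcup_i \pi(K_i)$ with $K_i \in \mathrm{Cl}(\mathsf{S})$. Since $N \subseteq M$, we have $M = \pi^{-1}(M/N) = \bigcup_i \pi^{-1}(\pi(K_i)) = \bigcup_i K_i N$, so it suffices to show that each set of the form $KN$ with $K \in \mathrm{Cl}(\mathsf{S})$ is a union of $\mathsf{S}$-classes. For any $h \in KN$ we have $\pi(h) \in \pi(K)$, and since $\pi(K)$ is the $\mathsf{S}_{G/N}$-class of $hN$, the $\mathsf{S}$-class $\mathrm{cl}_{\mathsf{S}}(h)$ satisfies $\pi(\mathrm{cl}_{\mathsf{S}}(h)) = \pi(K)$, so $\mathrm{cl}_{\mathsf{S}}(h) \subseteq \pi^{-1}(\pi(K)) = KN$. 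Thus $KN$ is $\mathsf{S}$-normal, and $M$, being a union of such sets, is a union of $\mathsf{S}$-classes.

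There is no real obstacle here beyond being careful about the pullback step: the key observation, which uses $\mathsf{S}$-normality of $N$ only insofar as it guarantees that $\mathsf{S}_{G/N}$ is well defined, is that $\pi^{-1}(\pi(K))$ equals $KN$ and is itself a union of $\mathsf{S}$-classes because any $\mathsf{S}$-class that meets a fiber of $\pi$ over $\pi(K)$ must project onto the single $\mathsf{S}_{G/N}$-class $\pi(K)$ and hence lie entirely in $KN$.
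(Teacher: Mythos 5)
Your proof is correct and follows essentially the same route as the paper's (which compresses the whole argument into the single observation that, because $N\le M$, the union $\bigcup_{m\in M}\mathrm{cl}_{\mathsf{S}}(m)$ escapes $M$ exactly when its image under $\pi$ escapes $M/N$). Your version just makes explicit the two facts the paper leaves implicit --- that $\pi^{-1}(\pi(K))=KN$ and that the sets $\pi(K)$ partition $G/N$ because $N$ is $\mathsf{S}$-normal --- and you correctly pinpoint where that hypothesis is used.
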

\begin{proof}
Let $\pi:G\to G/N$ be the canonical homomorphism. Then $\bigcup_{m\in M}\pi(\mathrm{cl}_{\mathsf{S}}(m))$ contains no elements outside of $M/N$ if and only if $\bigcup_{m\in M}\mathrm{cl}_{\mathsf{S}}(m)$ contains no elements outside of $M$. 
\end{proof}
We conclude this section with a proof of Theorem~\ref{degreediv}. We will need the following result, which appears as \cite[Lemma 4.2]{AH12}. We include here a simple proof using Clifford's Theorem. 
\begin{lem}\label{cliffres}
Let $\mathsf{S}$ be a supercharacter theory of $G$, and let $N$ be $\mathsf{S}$-normal. For each $\psi\in\mathrm{Ch}(\mathsf{S}_N)$, we have
\[\mathrm{Ind}_N^G(\psi)=\sigma_{\mathrm{Irr}(G\mid \psi)}.\]
\end{lem}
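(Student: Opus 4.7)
The plan is to compute each inner product $\langle \mathrm{Ind}_N^G(\psi),\chi\rangle$ for $\chi\in\mathrm{Irr}(G)$ via Frobenius reciprocity, and show directly that this inner product equals $\chi(1)$ when $\chi$ lies over $\psi$ and zero otherwise. Since $\psi$ is an $\mathsf{S}_N$-character, there is some $X\in\mathcal{X}_{\mathsf{S}}$ for which $\psi=\sigma_Y$ with $Y=\mathrm{Irr}(\mathrm{Res}^G_N(\sigma_X))=\bigcup_{\chi'\in X}\mathrm{Irr}(\mathrm{Res}^G_N(\chi'))$. The first key observation is that $Y$ is a $G$-invariant subset of $\mathrm{Irr}(N)$: by Clifford's theorem, each $\mathrm{Irr}(\mathrm{Res}^G_N(\chi'))$ is a single $G$-orbit, so $Y$ is a union of $G$-orbits.

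Next, I would fix $\chi\in\mathrm{Irr}(G)$ and compute
\[
\langle\mathrm{Ind}_N^G(\psi),\chi\rangle=\langle\psi,\mathrm{Res}^G_N(\chi)\rangle=\sum_{\theta\in Y}\theta(1)\,\langle\theta,\mathrm{Res}^G_N(\chi)\rangle.
\]
By Clifford's theorem, $\mathrm{Res}^G_N(\chi)=e(\theta_1+\dotsb+\theta_t)$ with $\theta_1,\dotsc,\theta_t$ forming a single $G$-orbit of common degree, and $\chi(1)=et\theta_1(1)$. If $\chi\in\mathrm{Irr}(G\mid\psi)$, then some $\theta_i$ lies in $Y$, and by $G$-invariance of $Y$ all of $\theta_1,\dotsc,\theta_t$ do; the right-hand side then collapses to $t\cdot\theta_1(1)\cdot e=\chi(1)$. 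Conversely, if $\chi\notin\mathrm{Irr}(G\mid\psi)$, no $\theta_i$ lies in $Y$, so the sum is zero. Summing over $\chi\in\mathrm{Irr}(G)$ yields $\mathrm{Ind}_N^G(\psi)=\sum_{\chi\in\mathrm{Irr}(G\mid\psi)}\chi(1)\chi=\sigma_{\mathrm{Irr}(G\mid\psi)}$.

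The only step that requires any care is the $G$-invariance of $Y$, which relies on the description of $\mathcal{X}_{\mathsf{S}_N}$ together with Clifford's theorem for \emph{irreducible} characters; I expect this to be the main conceptual input, while the rest is a routine manipulation of Frobenius reciprocity.
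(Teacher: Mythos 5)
Your proof is correct and follows essentially the same route as the paper's: Frobenius reciprocity reduces the computation to $\inner{\psi,\mathrm{Res}^G_N(\chi)}$, and Clifford's theorem collapses the sum to $\chi(1)$. The only difference is that you make explicit the $G$-invariance of $\mathrm{Irr}(\psi)$ (needed to know that the whole Clifford orbit lies in $\mathrm{Irr}(\psi)$), a point the paper's proof uses silently when it writes $\inner{\psi,\eta}=\eta(1)$ for every $\eta$ in the orbit.
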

\begin{proof}
Let $\xi\in X=\mathrm{Irr}(G\mid\psi)$ and let $\vartheta$ be in irreducible constituent of $\mathrm{Res}^G_N(\xi)$. By Clifford's Theorem, 
\[\mathrm{Res}^G_N(\xi)=e\sum_{\eta\in\mathcal{O}}\eta,\,\ \text{where}\ \,e=\inner{\vartheta,\mathrm{Res}^G_N(\xi)}\ \,\ \text{and}\ \,\mathcal{O}=\mathrm{orb}_G(\vartheta).\]
By Frobenius reciprocity, we have
\begin{align*}\inner{\mathrm{Ind}_N^G(\psi),\xi}&=\inner{\psi,\mathrm{Res}^G_N(\xi)}\\
&=e\sum_{\eta\in \mathcal{O}}\inner{\psi,\eta}=e\sum_{\eta\in \mathcal{O}}\eta(1)=\xi(1),
\end{align*}
as desired.
\end{proof}

\begin{proof}[Proof of Theorem~\ref{degreediv}]
First, we prove {\bf(1)}.

If $N\le\ker(\chi)$, then $\psi=\mathds{1}_N$ and the result is obvious. So assume that $N\nsubseteq\ker(\chi)$. 

Write $X=\mathrm{Irr}(\chi)$, and let $\{\psi_i\mathrel{:}1\le i\le m\}$ be a set of distinct representatives of the $G$-orbits of the irreducible constituents of $\mathrm{Res}^G_N(\chi)$. For each $i$, write
\[C_i=\mathrm{Irr}(G\mid\psi_i)\cap X\]and note that $X$ is the disjoint union of the $C_i$. For each $1\le i\le m$ and for each $\vartheta\in\mathrm{Irr}(G\mid\psi_i)$, write
\[e_{i,\vartheta}=\inner*{\psi_i,\mathrm{Res}^G_N(\vartheta)}\ \,\text{and}\ \,t_i=\norm{\mathrm{orb}_G(\psi_i)}.\]Fix $i$ and note that $t_ie_{i,\vartheta}=\vartheta(1)/\psi_i(1)$ for each $\vartheta\in\mathrm{Irr}(G\mid\psi_i)$ by Clifford's Theorem. Then for each $\xi\in\mathrm{Irr}(G\mid\psi_i)$, we have
\begin{align*}
\inner*{\mathrm{Ind}^G_N\mathrm{Res}^G_N(\sigma_{C_i}),\xi}&=\sum_{\vartheta\in C_i}\vartheta(1)\inner*{\mathrm{Res}^G_N(\vartheta),\mathrm{Res}^G_N(\xi)}\\
&=\sum_{\vartheta\in C_i}\vartheta(1)t_ie_{i,\vartheta} e_{i,\xi} =\sum_{\vartheta\in C_i}\vartheta(1)(\vartheta(1)/\psi_i(1)) e_{i,\xi}=\frac{\sigma_{C_i}(1)e_{i,\xi}}{\psi_i(1)}.
\end{align*}
Hence, we have
\begin{align*}\inner*{\mathrm{Ind}^G_N\mathrm{Res}^G_N(\sigma_{C_i}),\xi/\xi(1)}&=\frac{\sigma_{C_i}(1)e_{i,\xi}}{\psi_i(1)\xi(1)}\\
&=\frac{\sigma_{C_i}(1)e_{i,\xi}\psi_i(1)}{\xi(1)\psi_i(1)^2}=\frac{\sigma_{C_i}(1)}{t_i\psi_i(1)^2},
\end{align*}
and so it follows that
\[\mathrm{Ind}^G_N\mathrm{Res}^G_N(\chi)=\sum_{i=1}^m\frac{\sigma_{C_i}(1)}{t_i\psi_i(1)^2}\sigma_{\mathrm{Irr}(G\mid\psi_i)}.\]
From \cite[Definition 6.3]{AH12}, we have that $\mathrm{Res}^G_N(\chi)=\alpha\psi$, where $\alpha$ is the rational number $\alpha=\chi(1)/\psi(1)$. Also, by Lemma~\ref{cliffres}, we have
\[\mathrm{Ind}_N^G(\psi)=\sigma_{\mathrm{Irr}(G\mid\psi)}=\sum_{i=1}^m\sigma_{\mathrm{Irr}(G\mid\psi_i)}.\]
Therefore, we have
\[\mathrm{Ind}^G_N\mathrm{Res}^G_N(\chi)=\alpha\mathrm{Ind}^G_N(\psi)=\alpha\sum_{i=1}^m\sigma_{\mathrm{Irr}(G\mid\psi_i)}.\]
From this observation, it follows that 
\[\alpha=\frac{\sigma_{C_1}(1)}{t_1\psi_1(1)^2}=\frac{\sigma_{C_2}(1)}{t_2\psi_2(1)^2}=\dotsb=\frac{\sigma_{C_m}(1)}{t_m\psi_m(1)^2}.\]
Finally, note that for $\xi\in C_1$, 
\[\alpha_\xi\coloneqq\frac{\xi(1)^2}{t_1\psi_1(1)^2}=\frac{(\xi(1)/\psi_1(1))^2}{t_1}=\frac{(t_1e_{1,\xi})^2}{t_1}=t_1e_{1,\xi}^2\in\mathbb{Z},\]
and so 
\[\frac{\chi(1)}{\psi(1)}=\alpha=\sum_{\xi\in C_1}\alpha_\xi\in\mathbb{Z},\]
as desired.

Now, we prove {\bf(2)}.

Let $g\in G\setminus N$, where $N\lhd_{\mathsf{S}}G$. Let $\{g_1,\dotsc,g_t\}$ be a set of distinct coset representatives of the elements of $\mathrm{cl}_{\mathsf{S}_{G/N}}(g)$. Define $N_i=\{n\in N\mid g_in\in \mathrm{cl}_{\mathsf{S}}(g)\}$ for each $i$, and note that $\mathrm{cl}_{\mathsf{S}}(g)$ is the disjoint union of the sets $g_iN_i$, $1\le i\le t$.
It follows from \cite[Lemma 1.2]{LM90} that $\norm{N_i}=\norm{N_j}$ for all $i,j$. Hence $\norm{\mathrm{cl}_{\mathsf{S}_{G/N}}(gN)}=t$, which is a divisor of $t\norm{N_1}=\norm{\mathrm{cl}_{\mathsf{S}}(g)}$. 
\end{proof}

\section{Central elements and commutators}\label{centralcomm}
In this section, we prove several properties of the subgroups $\mathbf{Z}(\mathsf{S})$ and $[G,\mathsf{S}]$. Many of these will seem very familiar as they are direct generalizations of well-known results.

Let $\mathsf{S}\in\mathrm{SCT}(G)$. We define the subgroup $\mathbf{Z}(\mathsf{S})$, by

\[\mathbf{Z}(\mathsf{S})=\{g\in G:\norm{\mathrm{cl}_{\mathsf{S}}(g)}=1\}.\]
The fact that $\mathbf{Z}(\mathsf{S})$ is a subgroup, and hence an $\mathsf{S}$-normal subgroup, follows from the fact that products of $\mathsf{S}$-classes are unions of $\mathsf{S}$-classes. This fact was observed by Hendrickson in \cite{AH08}, as was the following result.

\begin{lem}[{\normalfont cf. \cite[Lemma 2.3]{AH08}}]\label{classaction}
Let $z\in\mathbf{Z}(\mathsf{S})$ and let $g\in G$. Then $z\mathrm{cl}_{\mathsf{S}}(g)=\mathrm{cl}_{\mathsf{S}}(zg)$.
\end{lem}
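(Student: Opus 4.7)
The plan is to reduce the statement directly to the fact highlighted just before the lemma, namely that products of $\mathsf{S}$-classes are unions of $\mathsf{S}$-classes. Since $z\in\mathbf{Z}(\mathsf{S})$, the singleton $\{z\}$ is itself an $\mathsf{S}$-class, so the setwise product $z\mathrm{cl}_{\mathsf{S}}(g)=\{z\}\cdot\mathrm{cl}_{\mathsf{S}}(g)$ is a union of $\mathsf{S}$-classes. Because this product contains $zg$, it must in particular contain the entire $\mathsf{S}$-class $\mathrm{cl}_{\mathsf{S}}(zg)$, which gives the inclusion $\mathrm{cl}_{\mathsf{S}}(zg)\subseteq z\mathrm{cl}_{\mathsf{S}}(g)$.

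For the reverse inclusion, the cleanest move is to apply the same observation again, but now to $z^{-1}$ and $zg$ in place of $z$ and $g$. Since $\mathbf{Z}(\mathsf{S})$ is a subgroup of $G$, we have $z^{-1}\in\mathbf{Z}(\mathsf{S})$, so $\{z^{-1}\}$ is an $\mathsf{S}$-class and $z^{-1}\mathrm{cl}_{\mathsf{S}}(zg)$ is a union of $\mathsf{S}$-classes. It contains $z^{-1}(zg)=g$, hence contains $\mathrm{cl}_{\mathsf{S}}(g)$. Left multiplying by $z$ yields $\mathrm{cl}_{\mathsf{S}}(zg)\supseteq z\mathrm{cl}_{\mathsf{S}}(g)$, completing the proof.

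There is no real obstacle here: the whole content is absorbed into the two facts already in hand, that $\mathbf{Z}(\mathsf{S})$ is a subgroup and that products of $\mathsf{S}$-classes decompose into $\mathsf{S}$-classes. The only thing to be slightly careful about is to avoid the temptation to argue by cardinality (observing that $|z\mathrm{cl}_{\mathsf{S}}(g)|=|\mathrm{cl}_{\mathsf{S}}(g)|$), since this does not immediately give equality with $\mathrm{cl}_{\mathsf{S}}(zg)$ unless one already knows the two sides are comparable. Using $z^{-1}$ symmetrically sidesteps this issue entirely.
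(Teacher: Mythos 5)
Your proof is correct. The paper itself gives no argument for this lemma---it is quoted from Hendrickson's work---so there is nothing to compare against, but your derivation is exactly the standard one: it rests only on the two facts the paper has already recorded (that products of $\mathsf{S}$-classes are unions of $\mathsf{S}$-classes, and that $\mathbf{Z}(\mathsf{S})$ is a subgroup, so $z^{-1}$ is again central), and the two inclusions obtained by applying the observation to $z$ and to $z^{-1}$ combine to give equality without any cardinality argument.
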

This lemma allows us to prove the following result, which strengthens Theorem~\ref{degreediv} in the case that $N\le \mathbf{Z}(\mathsf{S})$. 

\begin{prop}\label{otherclassdivide}
Let $N$ be an $\mathsf{S}$-normal subgroup of $G$ satisfying $N\le \mathbf{Z}(\mathsf{S})$. Then $\norm{\mathrm{cl}_{\mathsf{S}}(g)}/\norm{\mathrm{cl}_{\mathsf{S}_{G/N}}(gN)}$ divides $\norm{N}$ for each $g\in G$.
\end{prop}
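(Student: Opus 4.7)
The plan is to leverage the decomposition of $\mathrm{cl}_{\mathsf{S}}(g)$ that already appeared in the proof of Theorem~\ref{degreediv}(2) and then identify the ``fiber'' set $N_1$ with a subgroup of $N$ using the extra hypothesis $N\le \mathbf{Z}(\mathsf{S})$ together with Lemma~\ref{classaction}.

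First, choose coset representatives $g_1,\dotsc,g_t$ of the distinct cosets of $N$ meeting $\mathrm{cl}_{\mathsf{S}}(g)$, so that $t = \norm{\mathrm{cl}_{\mathsf{S}_{G/N}}(gN)}$, and set $N_i = \{n\in N : g_i n\in \mathrm{cl}_{\mathsf{S}}(g)\}$. As in the proof of Theorem~\ref{degreediv}(2), $\mathrm{cl}_{\mathsf{S}}(g)$ is the disjoint union of the sets $g_i N_i$ and the $\norm{N_i}$ are all equal to a common value $\norm{N_1}$; in particular
\[
\frac{\norm{\mathrm{cl}_{\mathsf{S}}(g)}}{\norm{\mathrm{cl}_{\mathsf{S}_{G/N}}(gN)}} = \norm{N_1}.
\]
Thus it suffices to show that $\norm{N_1}$ divides $\norm{N}$, and the natural route is to prove that $N_1$ is actually a subgroup of $N$.

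The key step is the following reformulation of $N_1$. Since $g_1\in \mathrm{cl}_{\mathsf{S}}(g)$, we may replace $g$ with $g_1$ and write $N_1 = \{n\in N : g_1 n \in \mathrm{cl}_{\mathsf{S}}(g_1)\}$. Now here is where the assumption $N\le \mathbf{Z}(\mathsf{S})$ enters: for any $n\in N$, Lemma~\ref{classaction} gives $\mathrm{cl}_{\mathsf{S}}(n g_1) = n\,\mathrm{cl}_{\mathsf{S}}(g_1)$. Since $g_1$ and $n$ commute (as $n$ is central in $G$), this says $\mathrm{cl}_{\mathsf{S}}(g_1 n) = n\,\mathrm{cl}_{\mathsf{S}}(g_1)$, and hence
\[
N_1 = \bigl\{n\in N : n\,\mathrm{cl}_{\mathsf{S}}(g_1) = \mathrm{cl}_{\mathsf{S}}(g_1)\bigr\}.
\]
This is manifestly the stabilizer of the subset $\mathrm{cl}_{\mathsf{S}}(g_1)\subseteq G$ under the left multiplication action of $N$, and is therefore a subgroup of $N$. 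By Lagrange's theorem, $\norm{N_1}$ divides $\norm{N}$, which completes the proof.

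I do not expect any real obstacle here; the only subtle point is to notice that Lemma~\ref{classaction} applies to every $n\in N$ precisely because $N\le \mathbf{Z}(\mathsf{S})$, and that this is exactly what upgrades the set $N_1$ (which in the general setting of Theorem~\ref{degreediv}(2) need not be closed under multiplication) into an honest subgroup.
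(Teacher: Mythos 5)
Your proof is correct and takes essentially the same route as the paper: both arguments rest on Lemma~\ref{classaction} making $N$ act on $\mathrm{Cl}(\mathsf{S})$ by left translation, and both identify the ratio $\norm{\mathrm{cl}_{\mathsf{S}}(g)}/\norm{\mathrm{cl}_{\mathsf{S}_{G/N}}(gN)}$ with $\norm{\mathrm{Stab}_N(\mathrm{cl}_{\mathsf{S}}(g))}$. The only cosmetic difference is that the paper double-counts $\norm{N\,\mathrm{cl}_{\mathsf{S}}(g)}$ directly, whereas you reuse the fiber decomposition from the proof of Theorem~\ref{degreediv}(2) and then observe that the fiber $N_1$ is exactly that stabilizer subgroup.
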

\begin{proof}
Let $K\in\mathrm{Cl}(\mathsf{S})$. Note that $N$ acts on $\mathrm{Cl}(\mathsf{S})$ by Lemma~\ref{classaction}. Also note that for every $n,n'\in N$, we have $nK=n'K$ if and only if we have $n^{-1}n'\in\mathrm{Stab}_N(K)$. It follows that
\[\norm{NK}=\norm{\{nK: n\in N\}}\norm{K}=\norm{\mathrm{orb}_N(K)}\norm{K}.\]
But also, for every $k,k'\in K$, we have $Nk=Nk'$ if and only if we have $\pi(k)=\pi(k')$, where $\pi:G\to G/N$ is the canonical projection, so
\[\norm{NK}=\norm{N}\norm{\{Nk: k\in K\}}=\norm{N}\norm{\pi(K)}.\]
Hence
\[\norm{K}=\frac{\norm{N}\norm{\pi(K)}}{\norm{\mathrm{orb}_N(K)}}=\norm{\pi(K)}\norm{\mathrm{Stab}_N(K)},\]
and the result follows.
\end{proof}

Next we will establish some character theoretic properties of $\mathbf{Z}(\mathsf{S})$, but first we need the following generalization of column orthogonality. 

\begin{thm}\label{column}Let $\mathsf{S}$ be a supercharacter theory of $G$. For each $g,h\in G$, we have
\[ \sum_{\chi\in\mathrm{Ch}(\mathsf{S})}\frac{\chi(g)\overline{\chi(h)}}{\chi(1)}=\frac{\norm{G}}{\norm{\mathrm{cl}_{\mathsf{S}}(g)}}\]
if $h\in\mathrm{cl}_{\mathsf{S}}(g)$, and is 0 otherwise.
\end{thm}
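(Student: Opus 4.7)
The approach is to first establish a ``row orthogonality'' for supercharacters analogous to the usual orthogonality of irreducible characters, and then derive the desired column orthogonality by the standard unitary matrix trick, exploiting the fact that the supercharacter table is square.

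First, I would compute $\inner{\sigma_X,\sigma_Y}$ for $X,Y\in\mathcal{X}_{\mathsf{S}}$ directly from the formula $\sigma_X=\sum_{\psi\in X}\psi(1)\psi$. Since distinct parts $X\ne Y$ in $\mathcal{X}_{\mathsf{S}}$ are disjoint and the irreducible characters are orthonormal, this gives
\[\inner{\sigma_X,\sigma_Y}=\delta_{X,Y}\sum_{\psi\in X}\psi(1)^2=\delta_{X,Y}\,\sigma_X(1).\]
Using that each $\sigma_X$ is constant on the parts of $\mathrm{Cl}(\mathsf{S})$, I would rewrite this inner product as a sum over superclasses and obtain the row orthogonality relation
\[\sum_{K\in\mathrm{Cl}(\mathsf{S})}\frac{\norm{K}}{\norm{G}}\sigma_X(g_K)\overline{\sigma_Y(g_K)}=\delta_{X,Y}\,\sigma_X(1),\]
where $g_K$ is any chosen representative of $K$.

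Next, I would use the crucial fact from the definition of a supercharacter theory that $\norm{\mathcal{X}_{\mathsf{S}}}=\norm{\mathcal{K}_{\mathsf{S}}}=n$, so that the supercharacter table is a genuine square matrix. Enumerating $\mathrm{Ch}(\mathsf{S})=\{\chi_1,\dotsc,\chi_n\}$ and $\mathrm{Cl}(\mathsf{S})=\{K_1,\dotsc,K_n\}$, define the $n\times n$ matrix $U$ by
\[U_{ij}=\chi_i(g_{K_j})\sqrt{\frac{\norm{K_j}}{\norm{G}\chi_i(1)}}.\]
The row orthogonality derived above is exactly the statement $UU^{\ast}=I$, so $U$ is unitary, and hence $U^{\ast}U=I$ as well.

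Expanding $U^{\ast}U=I$ at entry $(j,k)$ and using that each $\chi_i$ is constant on each superclass, so that $\chi_i(g)\overline{\chi_i(h)}=\chi_i(g_{K_k})\overline{\chi_i(g_{K_j})}$ for any $g\in K_k$ and $h\in K_j$, yields
\[\frac{\sqrt{\norm{K_j}\norm{K_k}}}{\norm{G}}\sum_{i=1}^{n}\frac{\chi_i(g)\overline{\chi_i(h)}}{\chi_i(1)}=\delta_{jk},\]
from which the theorem is immediate: taking $j=k$ (so $h$ lies in $\mathrm{cl}_{\mathsf{S}}(g)=K_k$) gives $\norm{G}/\norm{K_k}$, while $j\ne k$ gives $0$. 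The only point requiring care is that the normalization $\sqrt{\chi_i(1)}$ makes sense, which is immediate from $\chi_i(1)=\sum_{\psi\in X_i}\psi(1)^2>0$; beyond this bookkeeping the argument is routine, so I do not anticipate any real obstacle.
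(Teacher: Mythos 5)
Your proposal is correct and takes essentially the same approach as the paper: both derive row orthogonality of the supercharacters from the orthonormality of the irreducible characters together with the disjointness of the parts of $\mathcal{X}_{\mathsf{S}}$, and then pass to column orthogonality by inverting the square supercharacter table (a left inverse of a square matrix is a right inverse). The only difference is cosmetic --- you absorb the class-size and degree normalizations into a single unitary matrix $U$, whereas the paper keeps the diagonal matrices $D$ and $T$ separate.
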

\begin{proof}Let $\mathrm{Ch}(\mathsf{S})=\{\chi_1,\chi_2,\dotsc,\chi_\ell\}$, let $\{g_1,g_2,\dotsc,g_\ell\}$ be representatives for the distinct $\mathsf{S}$-classes of $G$, and write $K_i=\mathrm{cl}_{\mathsf{S}}(g_i)$ for each $i$. Let $A=(\chi_i(g_{j}))$, let $D=(\delta_{i,j}\norm{K_i})$ and let $A^\dagger$ denote the conjugate transpose of $A$. Then since
\[\inner{\chi_i,\chi_j}=\sum_{\substack{\xi\in \mathrm{Irr}(\chi_i)\\\vartheta\in \mathrm{Irr}(\chi_j)}}\xi(1)\vartheta(1)\inner{\xi,\vartheta}=\delta_{i,j}\chi_i(1),\]
we have
\[ADA^\dagger=\norm{G}T^2,\ \,\text{where}\ \,T=\left(\delta_{i,j}\sqrt{\chi_i(1)}\right).\]
It follows that \[1=Y(ADA^\dagger)Y=(YA)D(YA)^\dagger=D(YA)^\dagger(YA),\ \text{where}\ \,Y=\frac{1}{\sqrt{\norm{G}}}T^{-1}.\]
Rewriting, we obtain
\[1=D(YA)^\dagger(YA)=\left(\frac{\norm{K_j}}{\norm{G}} \sum_{k=1}^\ell\frac{\overline{\chi_k(g_{i})}\chi_k(g_{j})}{\chi_k(1)}\right)\]
which gives
\[ \sum_{k=1}^\ell\frac{\overline{\chi_k(g_{i})}\chi_k(g_{j})}{\chi_k(1)}=\delta_{i,j}\frac{\norm{G}}{\norm{K_j}},\]
as required.
\end{proof}

\begin{lem}\label{centerrestriction}Let $\mathsf{S}$ be a supercharacter theory of $G$, and let $g\in\mathbf{Z}(\mathsf{S})$. For each $\chi\in\mathrm{Ch}(\mathsf{S})$, there exists $\vartheta_\chi\in\mathrm{Irr}(\mathbf{Z}(\mathsf{S}))$ such that $\mathrm{Res}^G_{\mathbf{Z}(\mathsf{S})}(\chi)=\chi(1)\vartheta_\chi$.\end{lem}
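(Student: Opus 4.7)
The plan is to show that the restriction of every $\mathsf{S}$-character to $\mathbf{Z}(\mathsf{S})$ takes values of maximum possible absolute value, and then extract a single linear character from the equality case of the triangle inequality.

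First I would observe that $\mathbf{Z}(\mathsf{S}) \le \mathbf{Z}(G)$: if $g \in \mathbf{Z}(\mathsf{S})$ then $\mathrm{cl}_{\mathsf{S}}(g) = \{g\}$, but this set is a union of ordinary conjugacy classes, forcing the $G$-class of $g$ to equal $\{g\}$. In particular, $\mathbf{Z}(\mathsf{S})$ is abelian and, by standard theory, for each $\psi \in \mathrm{Irr}(G)$ there is a linear character $\lambda_\psi$ of $\mathbf{Z}(G)$ with $\psi(z) = \psi(1)\lambda_\psi(z)$ for every $z \in \mathbf{Z}(G)$. Writing $\chi = \sigma_X = \sum_{\psi \in X} \psi(1)\psi$, this gives
\[
\chi(z) = \sum_{\psi \in X} \psi(1)^2\, \lambda_\psi(z) \qquad (z \in \mathbf{Z}(\mathsf{S})),
\]
and in particular $|\chi(z)| \le \sum_{\psi \in X} \psi(1)^2 = \chi(1)$ by the triangle inequality.

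Next I would apply Theorem~\ref{column} at $g = h = z \in \mathbf{Z}(\mathsf{S})$, which yields
\[
\sum_{\chi \in \mathrm{Ch}(\mathsf{S})} \frac{|\chi(z)|^2}{\chi(1)} = \frac{\norm{G}}{\norm{\mathrm{cl}_{\mathsf{S}}(z)}} = \norm{G}.
\]
On the other hand, $\sum_{\chi \in \mathrm{Ch}(\mathsf{S})} \chi(1) = \sum_{\psi \in \mathrm{Irr}(G)} \psi(1)^2 = \norm{G}$, so combining with the bound $|\chi(z)|^2/\chi(1) \le \chi(1)$ from the previous step forces equality termwise: $|\chi(z)| = \chi(1)$ for every $\chi \in \mathrm{Ch}(\mathsf{S})$ and every $z \in \mathbf{Z}(\mathsf{S})$.

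Finally, the equality case of the triangle inequality applied to $\chi(z) = \sum_{\psi \in X} \psi(1)^2 \lambda_\psi(z)$ (a positive combination of unit complex numbers summing in absolute value to $\chi(1)$) tells me that the values $\lambda_\psi(z)$ for $\psi \in X$ all coincide. Hence, restricting to $\mathbf{Z}(\mathsf{S})$, the linear characters $\lambda_\psi|_{\mathbf{Z}(\mathsf{S})}$ are equal for all $\psi \in X$; call the common value $\vartheta_\chi \in \mathrm{Irr}(\mathbf{Z}(\mathsf{S}))$. Then $\mathrm{Res}^G_{\mathbf{Z}(\mathsf{S})}(\chi) = \sum_{\psi \in X} \psi(1)^2 \vartheta_\chi = \chi(1) \vartheta_\chi$, as required. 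The only slightly delicate step is combining column orthogonality with the triangle-inequality bound to extract termwise equality; everything afterward is essentially bookkeeping.
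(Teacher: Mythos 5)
Your proof is correct, but it takes a genuinely different route from the paper's. The paper works inside the superclass algebra $\mathcal{A}=\mathbb{C}\text{-}\mathrm{span}\{\widehat{K}:K\in\mathrm{Cl}(\mathsf{S})\}\subseteq\mathbf{Z}(\mathbb{C}G)$: writing each element of $\mathcal{A}$ in the basis $\{f_\xi\}$ dual to the supercharacters, the coordinate functional $\omega_\chi$ is an algebra homomorphism, and since every $z\in\mathbf{Z}(\mathsf{S})$ lies in $\mathcal{A}$ (its superclass is a singleton), the map $z\mapsto\chi(z)/\chi(1)=\omega_\chi(z)$ is multiplicative on $\mathbf{Z}(\mathsf{S})$ and is therefore the desired linear character $\vartheta_\chi$. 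You instead reduce to $\mathbf{Z}(\mathsf{S})\le\mathbf{Z}(G)$, invoke the classical fact that each $\psi\in\mathrm{Irr}(G)$ restricts to $\psi(1)\lambda_\psi$ on the center, and then use Theorem~\ref{column} at $g=h=z$ together with $\sum_{\chi}\chi(1)=\norm{G}$ and the bound $\norm{\chi(z)}\le\chi(1)$ to force termwise equality $\norm{\chi(z)}=\chi(1)$; the equality case of the triangle inequality then collapses all the $\lambda_\psi$, $\psi\in\mathrm{Irr}(\chi)$, to a common linear character on $\mathbf{Z}(\mathsf{S})$. All steps check out, and there is no circularity since Theorem~\ref{column} is established before this lemma. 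What each approach buys: the paper's argument is structural, identifies $\vartheta_\chi$ explicitly as $\omega_\chi\vert_{\mathbf{Z}(\mathsf{S})}$, and makes no use of $\mathbf{Z}(\mathsf{S})\le\mathbf{Z}(G)$ or of column orthogonality; yours is more elementary in its toolkit (ordinary character theory of the center plus an averaging/extremal argument) and, as a byproduct, proves $\norm{\chi(z)}=\chi(1)$ for all $\chi$, i.e.\ the containment $\mathbf{Z}(\mathsf{S})\le\bigcap_\chi\mathbf{Z}(\chi)$ of Proposition~\ref{centercharacter}{\bf(3)} --- in effect you run in the forward direction the same column-orthogonality computation that the paper uses there for the reverse containment.
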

\begin{proof}
Write $\mathcal{A}=\mathbb{C}$-$\mathrm{span}\{ \widehat{K}:K\in\mathrm{Cl}(\mathsf{S})\}$. By \cite[Theorem 2.2]{ID07},  $\mathcal{A}$ is a subalgebra of $\mathbf{Z}(\mathbb{C}{G})$, and $\{f_\xi:\xi\in\mathrm{Ch}(\mathsf{S})\}$ forms a basis for $\mathcal{A}$, where
\[f_\xi=\frac{1}{\norm{G}}\sum_{g\in G}\overline{\xi(g)}g\]
for each $\xi\in\mathrm{Ch}(\mathsf{S})$; i.e., $f_\xi$ is the sum of the primitive central idempotents $\frac{\vartheta(1)}{\norm{G}}\sum_{g\in G}\overline{\vartheta(g)}g$ for $\vartheta\in\mathrm{Irr}(\xi)$. For each $z\in\mathrm{A}$, write
\[z=\sum_{\xi\in\mathrm{Ch}(\mathsf{S})}\omega_\xi(z)f_\xi.\]
Then if $\mathfrak{X}$ is any representation affording $\chi$, we have $\chi(z)=\mathrm{tr}(\mathfrak{X}(z))=\omega_\chi(z)\chi(1)$. Since $\omega_\chi$ is an algebra homomorphism (see the proof of \cite[Theorem 2.4]{ID07}) and $\mathbf{Z}(\mathsf{S})\subseteq\mathcal{A}$, it follows that $\omega_\chi\vert_{\mathbf{Z}_{\mathsf{S}}(G)}$ is a group homomorphism $G\to\mathbb{C}^\times$. So $\mathrm{Res}^G_N(\chi)=\chi(1)\vartheta_\chi$, where $\vartheta_\chi=\omega_\chi\vert_{\mathbf{Z}_{\mathsf{S}}(G)}$ is a linear character of $\mathbf{Z}(\mathsf{S})$.
\end{proof}

Using Theorem~\ref{column} and Proposition~\ref{centerrestriction}, we are ready to prove the following result, which is a direct generalization of \cite[Lemma 2.27, Corollary 2.28]{MI76}.
\begin{prop}\label{centercharacter}
Let $\mathsf{S}$ be a supercharacter theory of $G$, let $\chi\in\mathrm{Ch}(\mathsf{S})$ and write $Z=\mathbf{Z}(\chi)$. Then the following hold:
\begin{enumerate}[label={\bf(\arabic*)}]\openup 5pt
\item$Z\lhd_{\mathsf{S}}G$;
\item$\mathrm{Res}^G_Z(\chi)=\chi(1)\vartheta_\chi$ for some $\vartheta_\chi\in\mathrm{Irr}(Z)$;
\item$\mathbf{Z}(\mathsf{S})=\bigcap_{\chi\in\mathrm{Ch}(S)}\mathbf{Z}(\chi)$;
\item$Z/\ker(\chi)=\mathbf{Z}(\mathsf{S}_{G/\ker(\chi)})$, and is cyclic.
\end{enumerate}
\end{prop}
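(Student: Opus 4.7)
The plan is to treat the parts in order. Part (1) is essentially a definitional check: $\norm{\chi}$ is constant on $\mathsf{S}$-classes because $\chi$ is, so $Z = \{g\in G : \norm{\chi(g)} = \chi(1)\}$ is automatically a union of $\mathsf{S}$-classes; combined with the classical fact that $\mathbf{Z}(\chi)$ is a subgroup, this yields $Z \lhd_{\mathsf{S}} G$. Parts (2) and (3) are character-theoretic computations that feed into the main work of (4).

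For (2), I would decompose $\chi = \sum_{\psi \in X}\psi(1)\psi$ with $X = \mathrm{Irr}(\chi)$ and apply the triangle inequality to $\norm{\chi(g)} \le \sum_{\psi}\psi(1)\norm{\psi(g)} \le \sum_{\psi}\psi(1)^2 = \chi(1)$. At $g \in Z$, equality throughout forces $\norm{\psi(g)} = \psi(1)$ for each $\psi \in X$ (so $Z \subseteq \mathbf{Z}(\psi)$) and forces the ratios $\psi(g)/\psi(1)$ to coincide across $\psi$. Each such ratio equals the value at $g$ of the linear character $\lambda_\psi$ on $\mathbf{Z}(\psi)$ supplied by classical theory, so their common value $\vartheta_\chi(g)$ assembles into a single linear character of $Z$, giving $\mathrm{Res}^G_Z(\chi) = \chi(1)\vartheta_\chi$.

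For (3), Lemma~\ref{centerrestriction} immediately gives the inclusion $\mathbf{Z}(\mathsf{S}) \subseteq \bigcap_\chi \mathbf{Z}(\chi)$. For the reverse, I would apply Theorem~\ref{column} at $h = g$: the hypothesis $g \in \bigcap_\chi\mathbf{Z}(\chi)$ reduces the left-hand side to $\sum_{\chi \in \mathrm{Ch}(\mathsf{S})} \chi(1) = \sum_{\psi \in \mathrm{Irr}(G)}\psi(1)^2 = \norm{G}$, forcing $\norm{\mathrm{cl}_{\mathsf{S}}(g)} = 1$.

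For (4), write $K = \ker(\chi) \subseteq Z$. The character $\vartheta_\chi$ from (2) satisfies $\vartheta_\chi(g) = 1$ iff $\chi(g) = \chi(1)$ iff $g \in K$, so $\vartheta_\chi$ embeds $Z/K$ into $\mathbb{C}^\times$ and $Z/K$ is cyclic. The inclusion $\mathbf{Z}(\mathsf{S}_{G/K}) \subseteq Z/K$ follows by applying (3) to $\mathsf{S}_{G/K}$: the deflation $\tilde{\chi}$ is an $\mathsf{S}_{G/K}$-character whose classical center is $Z/K$. The reverse inclusion is the main obstacle: for $g \in Z$, every $h \in \mathrm{cl}_{\mathsf{S}}(g)$ lies in $Z$ by (1) and satisfies $\chi(h) = \chi(g)$, so (2) gives $\vartheta_\chi(hg^{-1}) = 1$, putting $hg^{-1}$ in $K$; projection to $G/K$ then yields $\mathrm{cl}_{\mathsf{S}_{G/K}}(gK) = \{gK\}$. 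The classical analogue $\mathrm{cl}_G(g) \subseteq g\ker(\psi)$ for irreducible $\psi$ is automatic, but $\mathsf{S}$-classes can be substantially larger than conjugacy classes, so here one genuinely needs the uniform linearity of $\chi$ on $Z$ produced in (2).
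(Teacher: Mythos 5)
Your proof is correct and follows essentially the same route as the paper's: \textbf{(1)}, \textbf{(3)}, and \textbf{(4)} rest on the identical key steps (constancy of $\chi$ on $\mathsf{S}$-classes, Lemma~\ref{centerrestriction} together with Theorem~\ref{column} at $h=g$, and the linear character $\vartheta_\chi$ with $\ker(\vartheta_\chi)=\ker(\chi)$ used to collapse $\mathrm{cl}_{\mathsf{S}}(g)$ modulo $\ker(\chi)$). The only deviation is cosmetic: in \textbf{(2)} you reprove the classical fact $\mathrm{Res}^G_{\mathbf{Z}(\chi)}(\chi)=\chi(1)\vartheta_\chi$ from scratch via the triangle inequality on the irreducible decomposition, where the paper simply cites \cite[Lemma 2.27]{MI76}.
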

\begin{proof}
First, note that {\bf(2)} and the fact that $Z\le G$ follow from \cite[Lemma 2.27 (b,c)]{MI76}, which asserts the same facts for any character $\chi$ of $G$. Since $\chi$ is constant on $\mathsf{S}$-classes, $Z$ is a union of $\mathsf{S}$-classes, which proves {\bf(1)}. 

Next we prove {\bf(3)}.  Let $\chi\in\mathrm{Ch}(\mathsf{S})$. For $g\in\mathbf{Z}(\mathsf{S})$, we have $\chi(g)=\chi(1)\vartheta_\chi(g)$ for some $\vartheta_\chi\in\mathrm{Irr}(\mathbf{Z}(\mathsf{S}))$ by Proposition~\ref{centerrestriction}. Since $\norm{\vartheta_\chi(g)}=1$, it follows that $\mathbf{Z}(\mathsf{S})\le\mathbf{Z}(\chi)$.

Suppose that $g\in\mathbf{Z}(\chi)$ for each $\chi\in\mathrm{Ch}(\mathsf{S})$. By Theorem~\ref{column}, we have
\begin{align*}\frac{\norm{G}}{\norm{\mathrm{cl}_{\mathsf{S}}(g)}}&= \sum_{\chi\in\mathrm{Ch}(\mathsf{S})}\frac{\norm{\chi(g)}^2}{\chi(1)}= \sum_{\chi\in\mathrm{Ch}(\mathsf{S})}\frac{\chi(1)^2}{\chi(1)}=\sum_{\chi\in\mathrm{Ch}(\mathsf{S})}\chi(1)=\norm{G}.
\end{align*}
Hence $\norm{\mathrm{cl}_{\mathsf{S}}(g)}=1$, and so $g\in\mathbf{Z}(\mathsf{S})$. This finishes the proof of {\bf(3)}.

Finally we prove {\bf(4)}.  Let $\chi\in\mathrm{Ch}(\mathsf{S})$.  The fact that $Z/\ker(\chi)$ is cyclic follows from \cite[Lemma 2.27 (d)]{MI76}, so we need only prove that $Z/\ker(\chi)$ coincides with $\mathbf{Z}(\mathsf{S}_{G/\ker(\chi)})$. Since $\ker(\chi)\le\ker(\psi)\le\mathbf{Z}(\psi)$ for each $\psi\in\mathrm{Ch}(\mathsf{S}/\ker(\chi))$, we have
\begin{align*}\mathbf{Z}(\mathsf{S}_{G/\ker(\chi)})&=\bigcap_{\psi\in\mathrm{Ch}(\mathsf{S}/\ker(\chi))}\mathbf{Z}(\mathrm{Def}^G_{G/\ker(\chi)}(\psi))\\
&=\bigcap_{\psi\in\mathrm{Ch}(\mathsf{S}/\ker(\chi))}\mathbf{Z}(\psi)/\ker(\chi)\le\mathbf{Z}(\chi)/\ker(\chi).\end{align*}

To show the other containment, let $g\in\mathbf{Z}(\chi)$, and $k\in\mathrm{cl}_{\mathsf{S}}(g)$. Let $\vartheta_\chi\in\mathrm{Irr}(\mathbf{Z}(\chi))$ be such that $\mathrm{Res}^G_Z(\chi)=\chi(1)\vartheta_\chi$.  Since $\chi(g)=\chi(k)$, it follows that $\vartheta_\chi(g)=\vartheta_\chi(k)$; hence $g^{-1}k\in\ker(\vartheta_\chi)=\ker(\chi)$. Thus $g\ker(\chi)=k\ker(\chi)$, and it follows that $g\ker(\chi)\in\mathbf{Z}(\mathsf{S}_{G/\ker(\chi)})$, which completes the proof of {\bf(4)}.
\end{proof}

Let $H\le G$. Recall that the subgroup $[H,\mathsf{S}]$ is defined by
\[[H,\mathsf{S}]=\inner{g^{-1}k:g\in H,\ k\in\mathrm{cl}_{\mathsf{S}}(g)}.\]
By definition, $[G,\mathsf{S}]$ is a normal subgroup of $G$ containing $[G,G]$. Less obviously, $[G,\mathsf{S}]$ is also $\mathsf{S}$-normal. To prove this, we will need the following lemma.
\begin{lem}\label{lattice}
Let $\mathsf{S}$ be a supercharacter theory of $G$, and let $H,N$ be $\mathsf{S}$-normal. Then $H\cap N$ and $HN$ are both $\mathsf{S}$-normal.
\end{lem}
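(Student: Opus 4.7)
The plan is to verify the condition directly: that is, to show that for any $g$ in $H\cap N$ (respectively $HN$), the $\mathsf{S}$-class $\mathrm{cl}_{\mathsf{S}}(g)$ is contained in $H\cap N$ (respectively $HN$). The two ingredients I would rely on are (i) that $\mathsf{S}$-normal subgroups are in particular normal (they are unions of conjugacy classes), so $HN$ is automatically a subgroup, and (ii) the fact, noted earlier and attributed to \cite{ID07}, that the product of two $\mathsf{S}$-classes is a union of $\mathsf{S}$-classes.

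For $H\cap N$, the argument is essentially immediate. Given $g\in H\cap N$, the $\mathsf{S}$-normality of $H$ forces $\mathrm{cl}_{\mathsf{S}}(g)\subseteq H$, and the $\mathsf{S}$-normality of $N$ forces $\mathrm{cl}_{\mathsf{S}}(g)\subseteq N$, so $\mathrm{cl}_{\mathsf{S}}(g)\subseteq H\cap N$. Thus $H\cap N$ is a union of $\mathsf{S}$-classes.

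For $HN$, I would start by writing an arbitrary $g\in HN$ as $g=hn$ with $h\in H$ and $n\in N$. By ingredient (ii), the product set $\mathrm{cl}_{\mathsf{S}}(h)\,\mathrm{cl}_{\mathsf{S}}(n)$ is a union of $\mathsf{S}$-classes; since it contains $g=hn$, it contains the whole class $\mathrm{cl}_{\mathsf{S}}(g)$. On the other hand, $\mathrm{cl}_{\mathsf{S}}(h)\subseteq H$ and $\mathrm{cl}_{\mathsf{S}}(n)\subseteq N$ by $\mathsf{S}$-normality of $H$ and $N$, so $\mathrm{cl}_{\mathsf{S}}(h)\,\mathrm{cl}_{\mathsf{S}}(n)\subseteq HN$. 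Combining these inclusions yields $\mathrm{cl}_{\mathsf{S}}(g)\subseteq HN$, which is what we wanted.

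There isn't really a main obstacle here; the lemma is essentially a direct consequence of the definition of $\mathsf{S}$-normality together with the closure of $\mathrm{Cl}(\mathsf{S})$ under class multiplication. The only minor subtlety is remembering that one needs $HN$ to actually be a subgroup before speaking about its being $\mathsf{S}$-normal, which is why invoking normality of $H$ and $N$ at the outset is worthwhile.
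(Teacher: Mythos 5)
Your proof is correct and takes essentially the same approach as the paper: the $HN$ case is argued identically, via the fact that a product of $\mathsf{S}$-classes is a union of $\mathsf{S}$-classes together with $H$ and $N$ each being unions of $\mathsf{S}$-classes. The only cosmetic difference is in the $H\cap N$ case, where the paper invokes the characterization of $\mathsf{S}$-normal subgroups as intersections of kernels of sums of $\mathsf{S}$-characters, while you argue directly that $\mathrm{cl}_{\mathsf{S}}(g)$ lands in both $H$ and $N$; both are one-line observations.
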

\begin{proof}
Since $H$ and $N$ are intersections of kernels of $\mathsf{S}$-characters, so is $H\cap N$. 

For each $h\in H$ and $n\in N$, the set $\mathrm{cl}_{\mathsf{S}}(h)\mathrm{cl}_{\mathsf{S}}(n)$ is a union of $\mathsf{S}$-classes. Hence so is $HN$, as $H$ and $N$ are both unions of $\mathsf{S}$-classes.
\end{proof}
\begin{prop}\label{commnormal}
Let $\mathsf{S}$ be a supercharacter theory of $G$. Then $[G,\mathsf{S}]$ is $\mathsf{S}$-normal.
\end{prop}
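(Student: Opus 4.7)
The plan is to give a direct element-chasing argument, showing that any element of $[G,\mathsf{S}]$ has its entire $\mathsf{S}$-class contained in $[G,\mathsf{S}]$, so that $[G,\mathsf{S}]$ is automatically a union of $\mathsf{S}$-classes.

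The observation driving everything is that the definition
\[
[G,\mathsf{S}]=\inner{g^{-1}k:g\in G,\ k\in\mathrm{cl}_{\mathsf{S}}(g)}
\]
already records, for each pair $(x,y)$ with $y\in\mathrm{cl}_{\mathsf{S}}(x)$, that $x^{-1}y$ is literally one of the generating elements of $[G,\mathsf{S}]$. So once I know $x\in[G,\mathsf{S}]$, I get $y=x(x^{-1}y)\in[G,\mathsf{S}]$ for free.

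Concretely, the proof I would write is essentially one line: let $x\in[G,\mathsf{S}]$, and let $y\in\mathrm{cl}_{\mathsf{S}}(x)$. By the defining generating set, $x^{-1}y\in[G,\mathsf{S}]$, and therefore $y=x(x^{-1}y)\in[G,\mathsf{S}]$. Hence $\mathrm{cl}_{\mathsf{S}}(x)\subseteq[G,\mathsf{S}]$, which is exactly the condition that $[G,\mathsf{S}]$ is a union of $\mathsf{S}$-classes, i.e.\ $\mathsf{S}$-normal.

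There is no real obstacle here; the nontrivial content has been packed into the choice of generators. I do not expect to need Lemma~\ref{lattice} or any character-theoretic machinery for this particular statement (those tools would enter if one instead tried to identify $[G,\mathsf{S}]$ with the intersection of the kernels of the linear $\mathsf{S}$-characters, which is a longer route that also works but is unnecessary). The only thing to double-check when writing it up is that the identity element is admissible as a generator (it arises from $k=g$), so the argument does not break for $x=1$.
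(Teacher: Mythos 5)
Your argument is correct, and it is genuinely different from — and considerably more elementary than — the proof in the paper. The key point you exploit is that the generating set of $[G,\mathsf{S}]$ ranges over \emph{all} $g\in G$, so for any $x\in[G,\mathsf{S}]$ and $y\in\mathrm{cl}_{\mathsf{S}}(x)$ the element $x^{-1}y$ is itself a generator, whence $y=x(x^{-1}y)$ lies in the subgroup; this immediately exhibits $[G,\mathsf{S}]$ as a union of $\mathsf{S}$-classes, which is exactly the definition of $\mathsf{S}$-normality for a subgroup. The paper instead decomposes $[G,\mathsf{S}]$ as a product $\prod_{K}\inner{C_K}$, where $C_K=\bigcup_{g\in K}\{g^{-1}k:k\in K\}$ is the support of $\widehat{K^{-1}}\widehat{K}$ and hence a union of $\mathsf{S}$-classes; it then invokes Wielandt's result that the subgroup generated by such a class union is $\mathsf{S}$-normal, together with the lemma that products of $\mathsf{S}$-normal subgroups are $\mathsf{S}$-normal. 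That route yields extra structural information (the factorization of $[G,\mathsf{S}]$ into $\mathsf{S}$-normal pieces indexed by superclasses, tied to the superclass algebra), but for the bare statement your one-line argument suffices. Note also that your argument adapts without change to the paper's Corollary on $[N,\mathsf{S}]$ for $N\lhd_{\mathsf{S}}G$, since in that case $[N,\mathsf{S}]\le N$, so any $x\in[N,\mathsf{S}]$ is again an admissible first coordinate for a generator. Your remark about the identity element (arising from $k=g$) is correct and harmless.
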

\begin{proof}
Let $I=\{g^{-1}k:g\in G,\ k\in\mathrm{cl}_{\mathsf{S}}(g)\}$.  Define $C_K=\bigcup_{g\in K}\{g^{-1}k:k\in K\}$ for each $K\in\mathrm{Cl}(\mathsf{S})$. Notice that $C_K$ is precisely the set of elements of $G$ that appear in $\widehat{K^{-1}}\widehat{K}$, which is a union of $\mathsf{S}$-classes, say $C_{K,i}$, $1\le i\le \ell_K$. By \cite[Proposition 23.6]{HW64}, we have that $\inner{C_{K,i}}\lhd_{\mathsf{S}}G$ for each $K$ and $i$. The result follows since
\[[G,\mathsf{S}]=\inner{I}=\prod_{K\in\mathrm{Cl}(\mathsf{S})}\inner{C_K}=\prod_{K\in\mathrm{Cl}(\mathsf{S})}\prod_{i=1}^{\ell_K}\inner{C_{K,i}},\]
which is $\mathsf{S}$-normal by Lemma~\ref{lattice}.
\end{proof}
\begin{cor}\label{snormalcommutator}
Let $N\lhd_{\mathsf{S}}G$. Then $[N,\mathsf{S}]=\inner{g^{-1}k:g\in N,\ k\in\mathrm{cl}_{\mathsf{S}}(g)}$ is $\mathsf{S}$-normal.
\end{cor}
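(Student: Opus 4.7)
The plan is to mimic the proof of Proposition~\ref{commnormal} verbatim, but restrict the generating set to $\mathsf{S}$-classes that lie inside $N$. Since $N$ is $\mathsf{S}$-normal, for every $g\in N$ the entire class $\mathrm{cl}_{\mathsf{S}}(g)$ is contained in $N$. So the defining generating set
\[
I_N=\{g^{-1}k:g\in N,\ k\in\mathrm{cl}_{\mathsf{S}}(g)\}
\]
decomposes as the union of the sets $C_K=\bigcup_{g\in K}\{g^{-1}k:k\in K\}$ as $K$ runs over those $\mathsf{S}$-classes of $G$ with $K\subseteq N$, i.e., over $\mathrm{Cl}(\mathsf{S}_N)$.

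Exactly as in the proof of Proposition~\ref{commnormal}, each $C_K$ is the support of $\widehat{K^{-1}}\widehat{K}$ in $\mathbb{C}G$, which is a union of $\mathsf{S}$-classes of $G$, say $C_{K,1},\dots,C_{K,\ell_K}$, and by \cite[Proposition 23.6]{HW64} each $\langle C_{K,i}\rangle$ is an $\mathsf{S}$-normal subgroup of $G$. Then
\[
[N,\mathsf{S}]=\langle I_N\rangle=\prod_{\substack{K\in\mathrm{Cl}(\mathsf{S})\\K\subseteq N}}\prod_{i=1}^{\ell_K}\langle C_{K,i}\rangle,
\]
which is $\mathsf{S}$-normal by a repeated application of Lemma~\ref{lattice}. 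There is no real obstacle; the only substantive point is the observation in the first paragraph that $\mathsf{S}$-normality of $N$ lets one organise the generators of $[N,\mathsf{S}]$ by $\mathsf{S}$-classes lying in $N$, after which the argument of Proposition~\ref{commnormal} transfers with no change.
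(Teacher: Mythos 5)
Your argument is correct and is exactly the derivation the paper intends: the corollary is stated without proof as an immediate consequence of Proposition~\ref{commnormal}, and your key observation---that $\mathsf{S}$-normality of $N$ forces $\mathrm{cl}_{\mathsf{S}}(g)\subseteq N$ for $g\in N$, so the generating set is organised by the classes $K\in\mathrm{Cl}(\mathsf{S})$ with $K\subseteq N$---is precisely what makes the proof of that proposition transfer verbatim. Nothing further is needed.
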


Moreover, we can prove that $N\lhd_{\mathsf{S}}G$ if and only if $[N,\mathsf{S}]\le N$. To do this, we define for any $H\le G$ the subgroup $H^\mathsf{S}$ to be the intersection of all $\mathsf{S}$-normal subgroups of $G$ containing $H$.

\begin{prop}\label{sclosure}
Let $H\le G$ and $\mathsf{S}$ be a supercharacter theory of $G$. Then $H^\mathsf{S}=H[H,\mathsf{S}]$.
\end{prop}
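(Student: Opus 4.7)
The strategy is to prove two inclusions, showing that $H[H,\mathsf{S}]$ is itself $\mathsf{S}$-normal (and contains $H$), while simultaneously establishing that any $\mathsf{S}$-normal subgroup containing $H$ must also contain $[H,\mathsf{S}]$.

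The easy direction is $H[H,\mathsf{S}] \subseteq H^{\mathsf{S}}$. Let $N$ be any $\mathsf{S}$-normal subgroup containing $H$. Since $N$ is a union of $\mathsf{S}$-classes, for each $h \in H \subseteq N$ we have $\mathrm{cl}_{\mathsf{S}}(h) \subseteq N$. Thus for any $g \in H$ and $k \in \mathrm{cl}_{\mathsf{S}}(g)$, both $g$ and $k$ lie in $N$, so $g^{-1}k \in N$. Hence $[H,\mathsf{S}] \subseteq N$, giving $H[H,\mathsf{S}] \subseteq N$, and intersecting over all such $N$ yields $H[H,\mathsf{S}] \subseteq H^{\mathsf{S}}$.

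For the reverse inclusion, the plan is to exhibit $H[H,\mathsf{S}]$ as an $\mathsf{S}$-normal subgroup of $G$; then by minimality $H^{\mathsf{S}} \subseteq H[H,\mathsf{S}]$. The cleanest route is to identify
\[H[H,\mathsf{S}] = \Bigl\langle \bigcup_{h \in H} \mathrm{cl}_{\mathsf{S}}(h) \Bigr\rangle.\]
One inclusion is immediate: $H$ and each $g^{-1}k$ with $g \in H$, $k \in \mathrm{cl}_{\mathsf{S}}(g)$ lie in $\langle \bigcup_h \mathrm{cl}_{\mathsf{S}}(h)\rangle$. Conversely, every $k \in \mathrm{cl}_{\mathsf{S}}(h)$ equals $h(h^{-1}k) \in H[H,\mathsf{S}]$.

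Once this identification is in hand, $\mathsf{S}$-normality of $H[H,\mathsf{S}]$ follows from two standard facts, both of which are available in the paper: products of $\mathsf{S}$-classes are unions of $\mathsf{S}$-classes (used in the proof of Proposition~\ref{commnormal}), and the set of $\mathsf{S}$-classes is closed under inversion (from $\chi(g^{-1}) = \overline{\chi(g)}$ for all $\chi \in \mathrm{Ch}(\mathsf{S})$). Therefore $\bigcup_{h \in H} \mathrm{cl}_{\mathsf{S}}(h)$ is a union of $\mathsf{S}$-classes closed under inversion, and the subgroup it generates is obtained by taking finite products, so it too is a union of $\mathsf{S}$-classes, hence $\mathsf{S}$-normal.

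The main (and essentially only) obstacle is verifying that $\langle \bigcup_{h \in H} \mathrm{cl}_{\mathsf{S}}(h)\rangle$ really is a subgroup and not merely a subset, i.e., that the generating set contains inverses. Since $H$ itself is closed under inversion and inversion permutes $\mathsf{S}$-classes, one has $\mathrm{cl}_{\mathsf{S}}(h)^{-1} = \mathrm{cl}_{\mathsf{S}}(h^{-1})$, so the generating set is indeed inversion-closed. After that, assembling the two inclusions gives $H^{\mathsf{S}} = H[H,\mathsf{S}]$.
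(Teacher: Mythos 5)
Your first inclusion is fine, and your plan for the second one (show that $\bigl\langle \bigcup_{h\in H}\mathrm{cl}_{\mathsf{S}}(h)\bigr\rangle$ is $\mathsf{S}$-normal via closure of products of superclasses, then invoke minimality of $H^{\mathsf{S}}$) is sound and is exactly what the paper implicitly uses. The gap is in the claimed identification $H[H,\mathsf{S}]=\bigl\langle\bigcup_{h\in H}\mathrm{cl}_{\mathsf{S}}(h)\bigr\rangle$. What your two observations actually give is only the sandwich
\[
\bigcup_{h\in H}\mathrm{cl}_{\mathsf{S}}(h)\ \subseteq\ H[H,\mathsf{S}]\ \subseteq\ \Bigl\langle \bigcup_{h\in H}\mathrm{cl}_{\mathsf{S}}(h)\Bigr\rangle .
\]
To promote the left-hand containment to a containment of the \emph{generated subgroup} inside $H[H,\mathsf{S}]$, you need to know that the product set $H[H,\mathsf{S}]$ is itself a subgroup, i.e.\ closed under multiplication and inversion. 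A product of two subgroups need not be a subgroup, and since $H$ is an arbitrary subgroup this is not automatic; a word $k_1k_2\cdots k_n=h_1x_1h_2x_2\cdots h_nx_n$ (with $x_i=h_i^{-1}k_i$) can only be rewritten in the form $hx$ with $h\in H$, $x\in[H,\mathsf{S}]$ if you can push the $h_i$'s past the $x_j$'s, which requires $H$ to normalize $[H,\mathsf{S}]$. You never verify this, and it is precisely the point the paper spends the first half of its proof on.

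The fix is the paper's opening step: $[H,\mathsf{S}]\lhd G$. For $h\in H$, $k\in\mathrm{cl}_{\mathsf{S}}(h)$ and $g\in G$, write
\[
g^{-1}h^{-1}kg=\bigl(h^{-1}g^{-1}hg\bigr)^{-1}\,h^{-1}g^{-1}kg,
\]
and observe that $g^{-1}hg$ and $g^{-1}kg$ both lie in $\mathrm{cl}_{\mathsf{S}}(h)$ because $\mathsf{S}$-classes are unions of conjugacy classes; hence both factors are (inverses of) generators of $[H,\mathsf{S}]$. With $[H,\mathsf{S}]$ normal in $G$, the set $H[H,\mathsf{S}]$ is a subgroup, your identification with $\bigl\langle\bigcup_{h\in H}\mathrm{cl}_{\mathsf{S}}(h)\bigr\rangle$ becomes valid, and the rest of your argument closes the proof. (Minor remark: once this is in place, your inversion-closure discussion is not strictly necessary, since the generating set already contains $1$ and normality handles the rest, but it does no harm.)
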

\begin{proof}
First, we show that $[H,\mathsf{S}]\lhd G$. It suffices to show that $g^{-1}h^{-1}kg$ lies in $[H,\mathsf{S}]$ for all $h\in H$, $k\in \mathrm{cl}_{\mathsf{S}}(h)$ and $g\in G$. Let $h\in H$, $k\in \mathrm{cl}_{\mathsf{S}}(h)$ and $g\in G$. Then, we have
\[g^{-1}h^{-1}kg=g^{-1}h^{-1}ghh^{-1}g^{-1}kg=\left(h^{-1}g^{-1}hg\right)^{-1}h^{-1}g^{-1}kg.\]
Since $h,k\in \mathrm{cl}_{\mathsf{S}}(h)$ and $\mathrm{cl}_{\mathsf{S}}(h)$ is a union of $G$-classes, $g^{-1}hg$ and $g^{-1}kg$ also lie in $\mathrm{cl}_{\mathsf{S}}(h)$. It follows that $g^{-1}h^{-1}kg\in[H,\mathsf{S}]$ and so $[H,\mathsf{S}]\lhd G$. Thus $H[H,\mathsf{S}]\le G$ and clearly $H[H,\mathsf{S}]\le H^{\mathsf{S}}$. 

For the reverse containment, note that we have
\[H^{\mathsf{S}}=\inner{\mathrm{cl}_{\mathsf{S}}(h): h\in H}\le H[H,\mathsf{S}].\]
The result follows.
\end{proof}
\begin{cor}
Let $\mathsf{S}$ be a supercharacter theory of $G$. A subgroup $H\le G$ is $\mathsf{S}$-normal if and only if $[H,\mathsf{S}]\le H$.
\end{cor}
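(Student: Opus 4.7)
The plan is to deduce both directions as easy consequences of Proposition~\ref{sclosure}, using the observation that $H^{\mathsf{S}}$ is always $\mathsf{S}$-normal. For the latter, note that the intersection of any family of $\mathsf{S}$-normal subgroups is a union of $\mathsf{S}$-classes (each member being such a union), hence is itself $\mathsf{S}$-normal. In particular $H^{\mathsf{S}}\lhd_{\mathsf{S}}G$, and $H$ is $\mathsf{S}$-normal if and only if $H=H^{\mathsf{S}}$.

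Given this, I would first suppose $H\lhd_{\mathsf{S}}G$. Then $H$ itself lies in the collection of $\mathsf{S}$-normal subgroups containing $H$, so $H^{\mathsf{S}}=H$. Proposition~\ref{sclosure} gives $H=H[H,\mathsf{S}]$, which forces $[H,\mathsf{S}]\le H$.

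Conversely, suppose $[H,\mathsf{S}]\le H$. Then $H[H,\mathsf{S}]=H$, so Proposition~\ref{sclosure} yields $H^{\mathsf{S}}=H$. Since $H^{\mathsf{S}}$ is $\mathsf{S}$-normal, so is $H$.

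No real obstacle is expected here; the only subtlety worth flagging is the auxiliary observation that $H^{\mathsf{S}}$ is $\mathsf{S}$-normal, which is immediate from the fact that being $\mathsf{S}$-normal is equivalent to being a union of $\mathsf{S}$-classes and is therefore closed under arbitrary intersection (this generalizes Lemma~\ref{lattice} beyond the two-subgroup case but requires no new idea).
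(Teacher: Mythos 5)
Your argument is correct and is precisely the deduction the paper intends: the corollary is stated without proof as an immediate consequence of Proposition~\ref{sclosure}, and your two directions (via $H^{\mathsf{S}}=H[H,\mathsf{S}]$ together with the observation that $H^{\mathsf{S}}$ is $\mathsf{S}$-normal and equals $H$ exactly when $H$ is $\mathsf{S}$-normal) fill in exactly that reasoning. The auxiliary point you flag --- that an arbitrary intersection of $\mathsf{S}$-normal subgroups is $\mathsf{S}$-normal because each is a union of $\mathsf{S}$-classes --- is sound and is implicitly used by the paper as well.
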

The next result describes exactly which $\mathsf{S}$-characters are linear. 
\begin{prop}\label{gsproperty}
Let $\mathsf{S}$ be a supercharacter theory of $G$, and let $N$ be $\mathsf{S}$-normal. Then $[G,\mathsf{S}]\le N$ if and only if $\mathbf{Z}(\mathsf{S}_{G/N})=G/N$.
\end{prop}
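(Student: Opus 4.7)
The plan is to unpack both sides directly from definitions; the equivalence should fall out with essentially no machinery, so the main task is bookkeeping rather than any substantive obstacle.

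First I would recall the description of the quotient supercharacter theory: the $\mathsf{S}_{G/N}$-class of $gN$ is precisely $\pi(\mathrm{cl}_{\mathsf{S}}(g))$, where $\pi\colon G\to G/N$ is the canonical projection. Therefore $\mathbf{Z}(\mathsf{S}_{G/N})=G/N$ is equivalent to saying that for every $g\in G$, the image $\pi(\mathrm{cl}_{\mathsf{S}}(g))$ is a single coset, i.e.\ that $kN=gN$ whenever $k\in\mathrm{cl}_{\mathsf{S}}(g)$. Rewriting, this says $g^{-1}k\in N$ for every $g\in G$ and every $k\in\mathrm{cl}_{\mathsf{S}}(g)$.

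For the forward direction, assume $[G,\mathsf{S}]\le N$. By definition $[G,\mathsf{S}]$ is generated by the elements $g^{-1}k$ with $k\in\mathrm{cl}_{\mathsf{S}}(g)$, so in particular each such generator lies in $N$. By the reformulation above, this immediately gives $\mathbf{Z}(\mathsf{S}_{G/N})=G/N$. For the converse, assume $\mathbf{Z}(\mathsf{S}_{G/N})=G/N$. Then by the same reformulation every generator $g^{-1}k$ of $[G,\mathsf{S}]$ lies in $N$, so $[G,\mathsf{S}]\le N$.

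No step here looks delicate; the only place worth pausing is the description of the superclasses of $\mathsf{S}_{G/N}$, which is exactly Hendrickson's formula $\mathrm{Cl}(\mathsf{S}^{G/N})=\{\pi(K):K\in\mathrm{Cl}(\mathsf{S})\}$ quoted in the introduction. Once that is cited, both implications are just rewriting the condition $\pi(\mathrm{cl}_{\mathsf{S}}(g))=\{gN\}$ as $g^{-1}\mathrm{cl}_{\mathsf{S}}(g)\subseteq N$, which matches the generating set of $[G,\mathsf{S}]$ on the nose.
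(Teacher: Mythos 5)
Your argument is correct and is essentially the same as the paper's: both reduce $\mathbf{Z}(\mathsf{S}_{G/N})=G/N$ to the condition $g^{-1}k\in N$ for all $g\in G$ and $k\in\mathrm{cl}_{\mathsf{S}}(g)$ via the description $\mathrm{cl}_{\mathsf{S}_{G/N}}(gN)=\pi(\mathrm{cl}_{\mathsf{S}}(g))$, and then observe that this is exactly the condition that the generating set of $[G,\mathsf{S}]$ lies in the subgroup $N$. No gaps.
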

\begin{proof}
Let $\pi:G\to G/N$ be the canonical projection. Then $\mathbf{Z}(\mathsf{S}_{G/N})=G/N$ if and only if $\pi(g)=\pi(k)$ for every $g\in G$ and $k\in\mathrm{cl}_{\mathsf{S}}(g)$. This happens if and only if $g^{-1}k\in N$ for all $g\in G$ and $k\in\mathrm{cl}_{\mathsf{S}}(g)$, which is equivalent to the condition $[G,\mathsf{S}]\le N$. 
\end{proof}
As a corollary to this result, one has that an $\mathsf{S}$-character $\chi$ is linear if and only if $[G,\mathsf{S}]\le\ker(\chi)$. So the supercharacters of the deflated theory $\mathsf{S}_{G/[G,\mathsf{S}]}$ can naturally be considered as the set of linear $\mathsf{S}$-characters. The set of these characters, denoted $\mathrm{Ch}(\mathsf{S}/[G,\mathsf{S}])$, coincides with $\mathrm{Irr}(G/[G,\mathsf{S}])$ and hence forms a group. Moreover, this group acts on $\mathrm{Ch}(\mathsf{S})$. More generally, so does $\mathrm{Ch}(\mathsf{S}/N)$ does for any $N\lhd_{\mathsf{S}}G$ satisfying $[G,\mathsf{S}]\le N$.
\begin{lem}\label{linearaction}
Let $\mathsf{S}$ be a supercharacter theory of $G$, and let $N$ be an $\mathsf{S}$-normal subgroup of $G$ satisfying $[G,\mathsf{S}]\le N$. Then $\chi\lambda$ is an $\mathsf{S}$-character for each $\lambda\in\mathrm{Ch}(\mathsf{S}/N)$ and $\chi\in\mathrm{Ch}(\mathsf{S})$. In particular, the group $\mathrm{Ch}(\mathsf{S}/N)=\mathrm{Irr}(G/N)$ acts on $\mathrm{Ch}(\mathsf{S})$.
\end{lem}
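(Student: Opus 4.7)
The plan is to show that multiplication by $\lambda$ permutes the partition $\mathcal{X}_{\mathsf{S}}$ of $\mathrm{Irr}(G)$, so that writing $\chi=\sigma_{X_0}$ yields $\chi\lambda=\sigma_{\lambda X_0}\in\mathrm{Ch}(\mathsf{S})$. First observe that $\lambda$ is itself constant on $\mathsf{S}$-classes: since $[G,\mathsf{S}]\le N\le\ker(\lambda)$, for each $g\in G$ and $k\in\mathrm{cl}_{\mathsf{S}}(g)$ we have $g^{-1}k\in[G,\mathsf{S}]\le\ker(\lambda)$, whence $\lambda(g)=\lambda(k)$. Since $\chi\in\mathrm{Ch}(\mathsf{S})$ is also constant on $\mathsf{S}$-classes, so is the character $\chi\lambda$. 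Thus $\chi\lambda$ belongs to the $\ell$-dimensional space of $\mathsf{S}$-class functions, a space for which $\{\sigma_Y:Y\in\mathcal{X}_{\mathsf{S}}\}$ is a basis.

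Next, write $\chi\lambda=\sum_Y a_Y\sigma_Y$ and compute the coefficients. Since $\lambda$ is linear, the map $\psi\mapsto\lambda\psi$ is a degree-preserving bijection of $\mathrm{Irr}(G)$, and expanding $\chi=\sigma_{X_0}$ gives $\chi\lambda=\sum_{\psi\in X_0}\psi(1)(\lambda\psi)$, so the multiplicity of an irreducible $\xi$ in $\chi\lambda$ equals $\xi(1)$ if $\xi\in\lambda X_0$ and $0$ otherwise. Taking the inner product with any $\xi\in Y$ gives $a_Y=[\xi\in\lambda X_0]$ (Iverson bracket). Since $a_Y$ is a single number, this forces every part $Y$ to satisfy either $Y\subseteq\lambda X_0$ or $Y\cap\lambda X_0=\varnothing$; consequently, $\lambda X_0$ is a disjoint union of parts, say $\lambda X_0=Y_1\sqcup\dotsb\sqcup Y_k$, and $\chi\lambda=\sigma_{Y_1}+\dotsb+\sigma_{Y_k}$.

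To show $k=1$, apply the same analysis with $\bar\lambda\in\mathrm{Ch}(\mathsf{S}/N)$ in place of $\lambda$ and each $\sigma_{Y_i}$ in place of $\chi$: each $\bar\lambda Y_i$ is a nonempty union of parts. Combining with $\bar\lambda\lambda=\mathds{1}$ yields $X_0=\bar\lambda Y_1\sqcup\dotsb\sqcup\bar\lambda Y_k$, realizing the single part $X_0$ as a disjoint union of nonempty unions of parts. This forces $k=1$, so $\lambda X_0\in\mathcal{X}_{\mathsf{S}}$ and $\chi\lambda=\sigma_{\lambda X_0}\in\mathrm{Ch}(\mathsf{S})$. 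The group action claim is then immediate from the known group structure on $\mathrm{Ch}(\mathsf{S}/N)=\mathrm{Irr}(G/N)$ and the associativity of pointwise multiplication. The critical step is the middle paragraph, which translates the coarse fact that $\chi\lambda$ is an $\mathsf{S}$-class function into the combinatorial constraint that $\lambda X_0$ is a union of parts of $\mathcal{X}_{\mathsf{S}}$; it is this well-definedness of $a_Y$ on each part, upgraded to a single-part statement via the inverse $\bar\lambda$, that does the real work.
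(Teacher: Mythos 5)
Your proof is correct and follows essentially the same route as the paper's: both arguments first observe that $\chi\lambda$ is a character constant on $\mathsf{S}$-classes and hence a sum of distinct $\mathsf{S}$-characters, and then use multiplication by $\lambda^{-1}$ to force that sum to consist of a single term. You have simply made explicit the coefficient computation and the disjoint-union bookkeeping that the paper leaves implicit.
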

\begin{proof}
Let $\chi\in\mathrm{Ch}(\mathsf{S})$, and let $\lambda\in\mathrm{Ch}(\mathsf{S}/N)$. Since $\lambda$ is linear, $\chi\lambda$ is a character of $G$, which is constant on the $\mathsf{S}$-classes of $G$. So $\chi\lambda$ is a sum of $\mathsf{S}$-characters. Let $\xi\in\mathrm{Ch}(\mathsf{S})$ such that $\mathrm{Irr}(\xi)\subseteq\mathrm{Irr}(\chi\lambda)$. Then $\mathrm{Irr}(\xi\lambda^{-1})\subseteq\mathrm{Irr}(\chi)$, and since $\xi\lambda^{-1}$ is also a sum of $\mathsf{S}$-characters, it follows that $\xi=\chi\lambda$.
\end{proof}

\begin{prop}\label{actiondivide}
Let $N$ be an $\mathsf{S}$-normal subgroup satisfying $[G,\mathsf{S}]\le N$. Let $\chi\in\mathrm{Ch}(\mathsf{S}\mid N)$ and suppose that $\psi\in\mathrm{Ch}(\mathsf{S}_N)$ satisfies $\inner{\mathrm{Res}^G_N(\chi),\psi}>0$. Then $\chi(1)/\psi(1)$ divides $\norm{G:N}$. 
\end{prop}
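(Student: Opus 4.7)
The plan is to set $a = \chi(1)/\psi(1)$, which is already known to be a positive integer by Theorem~\ref{degreediv}(1), and to identify $a$ explicitly with the size of the stabilizer of $\chi$ under the natural action of $\mathrm{Irr}(G/N)$ on $\mathrm{Ch}(\mathsf{S})$ afforded by Lemma~\ref{linearaction}. Since $[G,G]\le[G,\mathsf{S}]\le N$, the quotient $G/N$ is abelian, so $\mathrm{Irr}(G/N)$ is a group of linear characters of order $\norm{G:N}$ acting on $\mathrm{Ch}(\mathsf{S})$ by multiplication. If $T$ denotes the stabilizer of $\chi$ and $O$ its orbit, then $\norm{T}\cdot\norm{O}=\norm{G:N}$, so it suffices to show $a=\norm{T}$.

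The mechanism that links $a$ to $\norm{T}$ is the projection formula $\mathrm{Ind}^G_N(\mathrm{Res}^G_N(\chi))=\chi\cdot\mathrm{Ind}^G_N(\mathds{1}_N)$. On one hand, because $\mathrm{Res}^G_N(\chi)=a\psi$ and by Lemma~\ref{cliffres},
\[\mathrm{Ind}^G_N(\mathrm{Res}^G_N(\chi))=a\,\mathrm{Ind}^G_N(\psi)=a\,\sigma_{\mathrm{Irr}(G\mid\psi)}.\]
On the other hand, since $G/N$ is abelian, $\mathrm{Ind}^G_N(\mathds{1}_N)=\sum_{\lambda\in\mathrm{Irr}(G/N)}\lambda$, and grouping the sum $\sum_{\lambda}\chi\lambda$ by the orbit yields
\[\chi\cdot\mathrm{Ind}^G_N(\mathds{1}_N)=\sum_{\lambda\in\mathrm{Irr}(G/N)}\chi\lambda=\norm{T}\sum_{\xi\in O}\xi.\]

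Equating the two expressions gives $\norm{T}\sum_{\xi\in O}\xi=a\,\sigma_{\mathrm{Irr}(G\mid\psi)}$. To extract $a=\norm{T}$ from this, I would pick any $\xi_0\in O$ and any irreducible constituent $\eta\in\mathrm{Irr}(\xi_0)$. Since $\mathrm{Res}^G_N(\xi_0)=a\psi$ (the same computation as for $\chi$, because restriction kills the linear twist), every constituent of $\mathrm{Res}^G_N(\eta)$ is a constituent of $\psi$, so $\eta\in\mathrm{Irr}(G\mid\psi)$. The $\mathrm{Irr}(\xi)$ for $\xi\in\mathrm{Ch}(\mathsf{S})$ partition $\mathrm{Irr}(G)$, so $\eta$ lies in exactly one $\mathrm{Irr}(\xi)$ with $\xi\in O$; comparing the coefficient of $\eta$ on both sides yields $\norm{T}\eta(1)=a\,\eta(1)$, hence $a=\norm{T}$.

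The main subtlety, and the place I would be most careful, is the orbit--stabilizer bookkeeping at the end: ensuring that each $\eta\in\mathrm{Irr}(\xi_0)$ does indeed sit in $\mathrm{Irr}(G\mid\psi)$ and in no other $\mathrm{Irr}(\xi)$ for $\xi\in O$, so that the coefficient comparison is clean. Once this is in place, $\chi(1)/\psi(1)=\norm{T}$ is a divisor of $\norm{\mathrm{Irr}(G/N)}=\norm{G:N}$, and the proposition follows.
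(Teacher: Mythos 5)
Your proof is correct and follows essentially the same route as the paper's: both identify $\chi(1)/\psi(1)$ with $\norm{\mathrm{Stab}_{\Lambda}(\chi)}$ for the action of $\Lambda=\mathrm{Irr}(G/N)$ from Lemma~\ref{linearaction}, by computing $\mathrm{Ind}^G_N(\psi)$ (equivalently $\sigma_C$ for $C=\mathrm{Irr}(G\mid\psi)$) in two ways and invoking orbit--stabilizer. The only difference is cosmetic: the paper compares degrees at the identity, while you compare the full character identities via the projection formula and extract the coefficient of a single irreducible constituent --- both reductions land on the same equation $\chi(1)=\norm{\mathrm{Stab}_{\Lambda}(\chi)}\,\psi(1)$.
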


\begin{proof}
Since $[G,\mathsf{S}]\le N$, $\Lambda=\mathrm{Ch}(\mathsf{S}/N)$ acts on $\mathrm{Ch}(\mathsf{S})$ by Lemma~\ref{linearaction}. Consider the set $C=\{\psi\lambda:\ \psi\in X,\ \lambda\in\Lambda\}$, where $X=\mathrm{Irr}(G;\chi)$. On the one hand, $C$ is exactly the set of constituents of $\mathrm{Ind}_N^G(\psi)$. By \cite[Lemma 3.4]{AH08}, we conclude that
\[\sigma_C(1)=\mathrm{Ind}_N^G(\psi)(1)=\norm{G:N}\psi(1).\]
On the other hand, we have $C=\bigcup_{\lambda\in \Lambda}X^\lambda$. Since $\mathrm{Irr}(G;\chi^\lambda)\cap \mathrm{Irr}(G;\chi)=\varnothing$ whenever $\chi^\lambda\neq \chi$ and $\chi^\lambda(1)=\chi(1)$ for each $\lambda\in\Lambda$, we have
\[\sigma_C(1)=\norm{\mathrm{orb}_{\Lambda}(\chi)}\chi(1).\]
Thus, we have
\[\chi(1)=\frac{\norm{G:N}\psi(1)}{\norm{\mathrm{orb}_{\Lambda}(\chi)}}=\norm{\mathrm{Stab}_{\Lambda}(\chi)}\psi(1).\]
The result follows as $\norm{\mathrm{Stab}_{\Lambda}(\chi)}$ divides $\norm{G:N}$.
\end{proof}
 
We conclude this section with the following result, which shows how the subgroups $\mathbf{Z}(\mathsf{S})$ and $[G,\mathsf{S}]$ behave with the lattice structure of $\mathrm{SCT}(G)$.

\begin{lem}\label{basicproperties}Let $\mathsf{S}$ and $\mathsf{T}$ be supercharacter theories of $G$. The following hold:

\begin{enumerate}[label={\bf(\arabic*)}]\openup5pt
\item if $\mathsf{S}\preccurlyeq\mathsf{T}$, then $\mathbf{Z}(\mathsf{T})\le\mathbf{Z}(\mathsf{S})$ and $[G,\mathsf{S}]\le[G,\mathsf{T}]$;

\item$\mathbf{Z}(\mathsf{S})\mspace{2mu}\mathbf{Z}(\mathsf{T})\le\mathbf{Z}(\mathsf{S}\wedge\mathsf{T})$ and $[G,\mathsf{S}\wedge\mathsf{T}]\le[G,\mathsf{S}]\cap[G,\mathsf{T}]$;

\item$\mathbf{Z}(\mathsf{S}\vee\mathsf{T})=\mathbf{Z}(\mathsf{S})\cap\mathbf{Z}(\mathsf{T})$ and $[G,\mathsf{S}\vee\mathsf{T}]=[G,\mathsf{S}][G,\mathsf{T}]$.
\end{enumerate}
\end{lem}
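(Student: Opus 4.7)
The plan is to reduce everything to part (1) and the defining relations of $\wedge$ and $\vee$ in the lattice $\mathrm{SCT}(G)$; the only place where more than formal manipulation is needed is a chain-of-classes argument for the reverse inclusions in (3). I would first reformulate the hypothesis of (1) as $\mathrm{cl}_{\mathsf{S}}(g)\subseteq\mathrm{cl}_{\mathsf{T}}(g)$ for every $g\in G$, which is immediate from the definition of $\preccurlyeq$ on partitions. Both halves of (1) then fall out: if $g\in\mathbf{Z}(\mathsf{T})$ then $\mathrm{cl}_{\mathsf{T}}(g)=\{g\}$ forces $\mathrm{cl}_{\mathsf{S}}(g)=\{g\}$; and every generator $g^{-1}k$ of $[G,\mathsf{S}]$, coming from a pair with $k\in\mathrm{cl}_{\mathsf{S}}(g)\subseteq\mathrm{cl}_{\mathsf{T}}(g)$, is also a generator of $[G,\mathsf{T}]$.

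Part (2) is then a corollary: applying (1) to $\mathsf{S}\wedge\mathsf{T}\preccurlyeq\mathsf{S}$ and $\mathsf{S}\wedge\mathsf{T}\preccurlyeq\mathsf{T}$ yields both $\mathbf{Z}(\mathsf{S}),\mathbf{Z}(\mathsf{T})\le\mathbf{Z}(\mathsf{S}\wedge\mathsf{T})$ and $[G,\mathsf{S}\wedge\mathsf{T}]\le[G,\mathsf{S}]\cap[G,\mathsf{T}]$. The set-product $\mathbf{Z}(\mathsf{S})\mathbf{Z}(\mathsf{T})$ is a subgroup because each factor lies in $\mathbf{Z}(G)$ (any singleton $\mathsf{S}$-class is a singleton conjugacy class), so the first assertion of (2) holds. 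By the same token, applying (1) to $\mathsf{S},\mathsf{T}\preccurlyeq\mathsf{S}\vee\mathsf{T}$ gives the $\le$-direction of each equality in (3): $\mathbf{Z}(\mathsf{S}\vee\mathsf{T})\le\mathbf{Z}(\mathsf{S})\cap\mathbf{Z}(\mathsf{T})$ and $[G,\mathsf{S}][G,\mathsf{T}]\le[G,\mathsf{S}\vee\mathsf{T}]$, where I use that $[G,\mathsf{S}]$ and $[G,\mathsf{T}]$ are both normal in $G$ (from the proof of Proposition~\ref{sclosure}), so their product is a subgroup.

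What remains are the reverse inclusions in (3), and these require the explicit description of $\mathsf{S}\vee\mathsf{T}$-classes as the equivalence classes of the transitive closure of ``lying in a common $\mathsf{S}$- or $\mathsf{T}$-class'': namely, $k\in\mathrm{cl}_{\mathsf{S}\vee\mathsf{T}}(g)$ iff there is a chain $g=g_0,g_1,\dotsc,g_n=k$ with each consecutive pair in a common $\mathsf{S}$- or $\mathsf{T}$-class. If $g\in\mathbf{Z}(\mathsf{S})\cap\mathbf{Z}(\mathsf{T})$, then $\mathrm{cl}_{\mathsf{S}}(g)=\mathrm{cl}_{\mathsf{T}}(g)=\{g\}$, so no chain from $g$ reaches beyond $g$ and hence $\mathrm{cl}_{\mathsf{S}\vee\mathsf{T}}(g)=\{g\}$, giving $g\in\mathbf{Z}(\mathsf{S}\vee\mathsf{T})$. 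For the commutator, given a generator $g^{-1}k$ of $[G,\mathsf{S}\vee\mathsf{T}]$ and an associated chain, the telescoping identity $g^{-1}k=\prod_{i=0}^{n-1}(g_i^{-1}g_{i+1})$ writes it as a product of generators of $[G,\mathsf{S}]$ and $[G,\mathsf{T}]$, placing it in $[G,\mathsf{S}][G,\mathsf{T}]$. The one genuine obstacle is this chain characterization of the join, which is just the standard description of the partition-lattice join; once it is in hand, everything else is routine.
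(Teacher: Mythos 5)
Your proof is correct and takes essentially the same route as the paper's: part \textbf{(1)} from the containment $\mathrm{cl}_{\mathsf{S}}(g)\subseteq\mathrm{cl}_{\mathsf{T}}(g)$, part \textbf{(2)} as a formal corollary, and the reverse inclusions in \textbf{(3)} via the chain description of the partition join together with a telescoping product of generators (the paper writes out exactly this chain, alternating between $\mathsf{S}$-classes $K_{i_s}$ and $\mathsf{T}$-classes $L_{j_s}$, and likewise relies on normality of $[G,\mathsf{S}]$ and $[G,\mathsf{T}]$ to land in the product subgroup). No gaps.
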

\begin{proof}
If $\mathsf{S}\preccurlyeq\mathsf{T}$, then every class of $\mathsf{S}$ is contained in a class of $\mathsf{T}$. Thus any element of $\mathbf{Z}(\mathsf{T})$ must also be an element of $\mathbf{Z}(\mathsf{S})$, just by size considerations. Also, note that the set $\{g^{-1}k:g\in G,\ k\in\mathrm{cl}_{\mathsf{S}}(g)\}$ is a subset of $\{g^{-1}k:g\in G,\ k\in\mathrm{cl}_{\mathsf{T}}(g)\}$. 
It follows that {\bf(1)} holds.

Statement {\bf(2)} follows immediately from {\bf(1)}. 

By {\bf(1)}, we have $\mathbf{Z}(\mathsf{S}\vee\mathsf{T})\le\mathbf{Z}(\mathsf{S})\cap\mathbf{Z}(\mathsf{T})$. To obtain the other containment, let $g\in \mathbf{Z}(\mathsf{S})\cap\mathbf{Z}(\mathsf{T})$. Then $\{g\}\in\mathrm{Cl}(\mathsf{S})\cap\mathrm{Cl}(\mathsf{T})$, which is a subset of $\mathrm{Cl}(\mathsf{S}\vee\mathsf{T})$; the first statement of {\bf(3)} follows.

Finally, we prove that $[G,\mathsf{S}\vee\mathsf{T}]=[G,\mathsf{S}][G,\mathsf{T}]$. By {\bf(1)}, $[G,\mathsf{S}\vee\mathsf{T}]\le [G,\mathsf{S}][G,\mathsf{T}]$. To see the other inclusion, let $g\in G$, and let $k\in C=\mathrm{cl}_{\mathsf{S}\vee\mathsf{T}}(g)$. Recall that $\mathrm{Cl}(\mathsf{S}\vee\mathsf{T})=\mathrm{Cl}(\mathsf{S})\vee\mathrm{Cl}(\mathsf{T})$, so there exists $\{K_i\}_{i=1}^m\subseteq\mathrm{Cl}(\mathsf{S})$ and $\{L_i\}_{i=1}^\ell\subseteq\mathrm{Cl}(\mathsf{T})$ for which
\[C=\bigcup_{i=1}^mK_i=\bigcup_{i=1}^\ell L_i.\]
Suppose that $g\in K_{a}\cap L_c$ and $k\in K_{b}\cap L_d$. Then we may choose multisets $\{a=i_1,i_2,\dotsc,i_f=b\}\subseteq\{1,2,\dotsc,m\}$ and $\{c=j_1,j_2,\dotsc j_{f-1}=d\}\subseteq\{1,2,\dotsc,\ell\}$ so that 
\[K_{i_s}\cap L_{j_s}\neq\varnothing,\ \,\text{and}\ \,K_{i_{s+1}}\cap L_{j_s}\neq\varnothing,\]
for each $1\le s\le f-1$. Choose 
\[k_1\in K_{i_2}\cap L_{j_1},\ k_2\in K_{i_2}\cap L_{j_2},\ k_3\in K_{i_3}\cap L_{j_2},\ \dotsc,\ k_{2f-4}\in K_{i_{f-1}}\cap L_{i_{f-1}}.\] Then if $k_0=g$, which is in $K_{i_1}\cap J_{j_1}$, and $k_{2f-3}=k$, which is in $K_{i_f}\cap L_{i_{f-1}}$, then $k_i^{-1}k_{i+1}\in[G,\mathsf{T}]$ for each even $i$, and $k_i^{-1}k_{i+1}\in[G,\mathsf{S}]$ for each odd $i$. It follows that
\[g^{-1}k=(g^{-1}k_1)(k_1^{-1}k_2)\dotsb(k_{2f-3}^{-1}k_{2f-4})(k_{2f-4}^{-1}k)\in[G,\mathsf{S}][G,\mathsf{T}].\]
Since $[G,\mathsf{S}\vee\mathsf{T}]$ is generated by elements of the form $g^{-1}k$ for $k\in\mathrm{cl}_{\mathsf{S}\vee\mathsf{T}}(g)$, the result follows. This completes the proof of {\bf(3)}.
\end{proof}
\section{$\mathsf{S}$-chief series and $\mathsf{S}$-central series}
In this section, we introduce certain $\mathsf{S}$-normal series, i.e., normal series consisting of $\mathsf{S}$-normal subgroups, where $\mathsf{S}$ is a supercharacter theory of $G$. In particular, we discuss $\mathsf{S}$-central series, and give several characterizations of when $G$ has an $\mathsf{S}$-central series. First though, we discuss an analog of chief series.

We will say that a supercharacter theory $\mathsf{S}$ of $G$ is {\bf simple} if $G$ has no nontrivial proper $\mathsf{S}$-normal subgroups. Let $N_1$ be a maximal $\mathsf{S}$-normal subgroup of $G$. Then the induced supercharacter theory $\mathsf{S}_{G/N_1}$ is simple by Lemma~\ref{latticeisothm}. Continuing this way, we can construct an $\mathsf{S}$-normal series
\[G\ge N_1\ge\dotsb\ge N_s=1,\]
where $\mathsf{S}_{N_i/N_{i+1}}$ is simple for each $i$. We call such a series an $\mathsf{S}$-{\bf chief} series.

Note that the set $\mathrm{Norm}(\mathsf{S})$ of $\mathsf{S}$-normal subgroups of $G$ forms a (modular) sublattice of the lattice normal subgroups of $G$ by Lemma~\ref{lattice}. It is well-known that the Jordan--H\"{o}lder Theorem applies in this situation.

\begin{thm}[Jordan--H\"{o}lder]\label{JH}
Let $G\ge N_1\ge\dotsb\ge N_s=1$ and $G\ge H_1\ge\dotsb\ge H_\ell=1$ be two $\mathsf{S}$-chief series for $G$. Then $s=\ell$, and there is $\sigma\in\mathrm{Sym}(\{1,2,\dotsc,s-1\})$ such that $N_i/N_{i+1}\simeq H_{\sigma(i)}/H_{\sigma(i)+1}$ for each $1\le i\le s-1$.
\end{thm}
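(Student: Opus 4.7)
The plan is to mimic the classical proof of the Jordan--H\"{o}lder theorem by routing through Schreier's refinement theorem, replacing ``normal'' with ``$\mathsf{S}$-normal'' everywhere. The crucial bit of supercharacter-theoretic infrastructure is Lemma~\ref{lattice}, which tells us that $\mathrm{Norm}(\mathsf{S})$ is closed under intersections and products, so every subgroup constructed in the classical argument remains $\mathsf{S}$-normal. Since the theorem only asserts a \emph{group} isomorphism between the chief factors (not an isomorphism of induced supercharacter theories), no additional data needs to be tracked through the proof.

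First, I would establish an $\mathsf{S}$-normal version of Zassenhaus' butterfly lemma: for $\mathsf{S}$-normal subgroups $A \le B$ and $C \le D$ of $G$, I would show that $A(B \cap C)$, $A(B \cap D)$, $C(A \cap D)$ and $C(B \cap D)$ are $\mathsf{S}$-normal (all by Lemma~\ref{lattice}), and invoke the classical isomorphism
\[
A(B \cap D)/A(B \cap C) \;\simeq\; (B \cap D)/(A \cap D)(B \cap C) \;\simeq\; C(B \cap D)/C(A \cap D),
\]
whose proof rests only on Dedekind's modular law and the second isomorphism theorem, neither of which requires any modification.

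Next, I would deduce an $\mathsf{S}$-normal Schreier refinement theorem. Given two $\mathsf{S}$-normal series
\[
G = N_0 \ge N_1 \ge \dotsb \ge N_s = 1, \qquad G = H_0 \ge H_1 \ge \dotsb \ge H_\ell = 1,
\]
between $N_i$ and $N_{i+1}$ I would insert the chain $N_{i+1}(N_i \cap H_j)$ for $j = 0, 1, \dotsc, \ell$, and symmetrically refine the second series. Each inserted term is $\mathsf{S}$-normal by Lemma~\ref{lattice}, and the $\mathsf{S}$-normal Zassenhaus lemma from the previous step pairs up the successive factors of the two refinements into isomorphic pairs.

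Finally, I would apply the refinement theorem to the two given $\mathsf{S}$-chief series. The simplicity of $\mathsf{S}_{N_i/N_{i+1}}$, combined with Lemma~\ref{latticeisothm}, means that no $\mathsf{S}$-normal subgroup of $G$ strictly separates $N_{i+1}$ from $N_i$; hence no $\mathsf{S}$-chief series can be properly refined in $\mathrm{Norm}(\mathsf{S})$. Therefore the two equivalent refinements produced by Schreier's theorem must differ from the original chief series only by repeated terms (i.e.\ trivial factors). Collapsing those trivial factors yields a bijection $\sigma$ together with the required isomorphisms $N_i/N_{i+1} \simeq H_{\sigma(i)}/H_{\sigma(i)+1}$. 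The main obstacle is purely bookkeeping in the Schreier--Zassenhaus framework; the genuinely supercharacter-theoretic content reduces to the two cited lemmas.
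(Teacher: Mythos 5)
Your proof is correct and is essentially the paper's own approach: the paper simply observes that $\mathrm{Norm}(\mathsf{S})$ is a modular sublattice of the lattice of normal subgroups (via Lemma~\ref{lattice}) and cites the Jordan--H\"{o}lder theorem as well known in that setting, and your Schreier--Zassenhaus argument is exactly the standard proof of that well-known fact, with Lemma~\ref{lattice} supplying the closure of $\mathrm{Norm}(\mathsf{S})$ under the intersections and products that the refinement requires. The only detail worth making explicit is that simplicity of $\mathsf{S}_{N_i/N_{i+1}}$ rules out $\mathsf{S}$-normal subgroups of $G$ strictly between $N_{i+1}$ and $N_i$ because the $\mathsf{S}_{N_i/N_{i+1}}$-classes are the images of the $\mathsf{S}$-classes contained in $N_i$, which is a one-line consequence of the definitions together with Lemma~\ref{latticeisothm}.
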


If $N$ and $H$ are $\mathsf{S}$-normal in $G$, and $H/N$ is simple, we will call $H/N$ an $\mathsf{S}$-chief factor of $G$.  As a corollary of Theorem~\ref{JH}, we have that the set of isomorphism classes of $\mathsf{S}$-chief factors is independent of the choice of $\mathsf{S}$-chief series.

A finite nilpotent group has many equivalent characterizations, and several of these can be stated in terms of central series and chief series. Before giving various analogous characterizations of $\mathsf{S}$-nilpotence via $\mathsf{S}$-normal series, we make the following observation. By Proposition~\ref{gsproperty}, we have 
\[\gamma_{i}(\mathsf{S})/\gamma_{i+1}(\mathsf{S})=\mathbf{Z}(\mathsf{S}_{\gamma_{i}(\mathsf{S})/\gamma_{i+1}(\mathsf{S})})\le\mathbf{Z}(\mathsf{S}_{G/\gamma_{i+1}(\mathsf{S})})\le\mathbf{Z}(G/\gamma_{i+1}(\mathsf{S}))\]
for each $i\ge 1$; it follows that the lower $\mathsf{S}$-central series is indeed a central series if it is an $\mathsf{S}$-central series.

\begin{prop}\label{nilclassification}
Let $\mathsf{S}$ be a supercharacter theory of $G$. The following are equivalent:
\begin{enumerate}[label = {\bf(\arabic*)}]\openup 5pt
\item the group $G$ has an $\mathsf{S}$-central series;
\item the lower $\mathsf{S}$-central series terminates at $1$, and has minimal length;
\item the upper $\mathsf{S}$-central series terminates at $G$, and has minimal length;
\item every $\mathsf{S}$-chief factor $N/H$ has prime order, and $\mathsf{S}_{N/H}=\mathsf{m}(N/H)$;
\item every $\mathsf{S}$-chief series is also an $\mathsf{S}$-central series. 
\end{enumerate}
\end{prop}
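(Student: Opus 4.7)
The plan is to handle the free implications first and then focus on the substantive steps. Implications $(2)\Rightarrow(1)$ and $(3)\Rightarrow(1)$ are immediate: the lower (resp.\ upper) $\mathsf{S}$-central series is itself an $\mathsf{S}$-central series once it terminates correctly, since $\gamma_i/\gamma_{i+1}=\gamma_i/[\gamma_i,\mathsf{S}]\le\mathbf{Z}(\mathsf{S}^{G/\gamma_{i+1}})$ directly from the definition of $\gamma_{i+1}$, and $\zeta_i/\zeta_{i-1}=\mathbf{Z}(\mathsf{S}^{G/\zeta_{i-1}})$ by construction. Similarly $(5)\Rightarrow(1)$ is immediate, since Lemma~\ref{latticeisothm} guarantees that $\mathsf{S}$-chief series exist. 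The real content lies in $(1)\Rightarrow(2)$, $(1)\Rightarrow(3)$, $(1)\Rightarrow(5)$, and $(5)\Leftrightarrow(4)$.

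For $(1)\Rightarrow(2)$ and $(1)\Rightarrow(3)$, I would fix an $\mathsf{S}$-central series $G=M_1\ge M_2\ge\dotsb\ge M_{r+1}=1$ and prove by induction the two containments $\gamma_i(\mathsf{S})\le M_i$ and $M_{r+1-i}\le\zeta_i(\mathsf{S})$. For the first: if $\gamma_i\le M_i$, each generator $g^{-1}k$ of $\gamma_{i+1}=[\gamma_i,\mathsf{S}]$ has $g\in M_i$ and $k\in\mathrm{cl}_\mathsf{S}(g)$, and $M_i/M_{i+1}\le\mathbf{Z}(\mathsf{S}^{G/M_{i+1}})$ forces $g^{-1}k\in M_{i+1}$. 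For the second: if $M_{r+1-i}\le\zeta_i$, the projection $G/M_{r+1-i}\twoheadrightarrow G/\zeta_i$ sends singleton $\mathsf{S}^{G/M_{r+1-i}}$-classes to singleton $\mathsf{S}^{G/\zeta_i}$-classes, so the image of $M_{r-i}/M_{r+1-i}\le\mathbf{Z}(\mathsf{S}^{G/M_{r+1-i}})$ lands in $\mathbf{Z}(\mathsf{S}^{G/\zeta_i})=\zeta_{i+1}/\zeta_i$, giving $M_{r-i}\le\zeta_{i+1}$. Both inductions yield termination within $r$ steps, and minimality follows by applying the same containments to the terminated lower or upper series regarded as an $\mathsf{S}$-central series of length at most $r$.

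The crux is $(1)\Rightarrow(5)$. I would extract the key lemma: in an $\mathsf{S}$-nilpotent group, every nontrivial $\mathsf{S}$-normal subgroup $N$ satisfies $[N,\mathsf{S}]<N$. Let $i$ be largest with $N\le M_i$; then $NM_{i+1}/M_{i+1}$ is a nontrivial subgroup of $\mathbf{Z}(\mathsf{S}^{G/M_{i+1}})$, so each generator $g^{-1}k$ of $[N,\mathsf{S}]$ lies in $M_{i+1}$, and hence in the proper subgroup $N\cap M_{i+1}$ of $N$. Now let $N_i/N_{i+1}$ be any $\mathsf{S}$-chief factor: the quotient $G/N_{i+1}$ is $\mathsf{S}^{G/N_{i+1}}$-nilpotent (the images of the $M_j$ form an $\mathsf{S}^{G/N_{i+1}}$-central series), so the lemma applied to the nontrivial $\mathsf{S}^{G/N_{i+1}}$-normal subgroup $N_i/N_{i+1}$ yields $[N_i/N_{i+1},\mathsf{S}^{G/N_{i+1}}]<N_i/N_{i+1}$. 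This commutator is itself $\mathsf{S}^{G/N_{i+1}}$-normal by Corollary~\ref{snormalcommutator}, so simplicity of $\mathsf{S}_{N_i/N_{i+1}}$ forces it to be trivial; that is, $N_i/N_{i+1}\le\mathbf{Z}(\mathsf{S}^{G/N_{i+1}})$.

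For $(5)\Leftrightarrow(4)$, I would use that $N/H\le\mathbf{Z}(\mathsf{S}^{G/H})$ is equivalent to every $\mathsf{S}_{N/H}$-class being a singleton. Combined with simplicity, singleton classes make every subgroup of $N/H$ an $\mathsf{S}_{N/H}$-normal subgroup, forcing $N/H$ to have no proper nontrivial subgroup and hence prime order; then $N/H$ is abelian, so $\mathsf{m}(N/H)$ also has singleton classes and equals $\mathsf{S}_{N/H}$. The converse reverses the same observation. The main obstacle I expect is the lemma $[N,\mathsf{S}]<N$ used in $(1)\Rightarrow(5)$, which is the $\mathsf{S}$-analog of the classical fact that proper subgroups of a nilpotent group lie strictly inside their normalizers; the remainder is bookkeeping within the central-series formalism developed in Sections~\ref{snormality} and~\ref{centralcomm}.
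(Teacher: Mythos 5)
Your proof is correct, but your handling of conditions {\bf(4)} and {\bf(5)} takes a genuinely different route from the paper's. For the equivalence of {\bf(1)}, {\bf(2)}, {\bf(3)} you run the same two inductions ($\gamma_i(\mathsf{S})\le M_i$ and $M_{r+1-i}\le\zeta_i(\mathsf{S})$) as the paper. The divergence is in the chief-series conditions: the paper proves {\bf(3)}$\Rightarrow${\bf(4)} by producing chief factors $N/H$ sandwiched between consecutive terms $\zeta_{i-1}(\mathsf{S})\le H<N\le\zeta_i(\mathsf{S})$ of the upper central series and computing $\mathsf{S}_{N/H}=\mathsf{m}(\zeta_i(\mathsf{S})/\zeta_{i-1}(\mathsf{S}))_{N/H}=\mathsf{m}(N/H)$, so it reaches an \emph{arbitrary} chief factor only through the Jordan--H\"older invariance of chief factors, and then reads {\bf(5)} off from {\bf(4)}. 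You instead go directly at {\bf(1)}$\Rightarrow${\bf(5)} by isolating the lemma that a nontrivial $\mathsf{S}$-normal subgroup $N$ of an $\mathsf{S}$-nilpotent group satisfies $[N,\mathsf{S}]<N$ (the analog of ``a nontrivial normal subgroup meets the center''), applying it in $G/N_{i+1}$, and letting simplicity force the proper $\mathsf{S}^{G/N_{i+1}}$-normal subgroup $[N_i/N_{i+1},\mathsf{S}^{G/N_{i+1}}]$ to be trivial; you then recover {\bf(4)} from {\bf(5)} via the singleton-class characterization of $\mathbf{Z}(\mathsf{S}^{G/H})$. Your route handles an arbitrary chief series head-on and avoids any appeal to Jordan--H\"older, at the cost of two auxiliary facts you should make explicit: that the images of the $M_j$ in $G/N_{i+1}$ really do form an $\mathsf{S}^{G/N_{i+1}}$-central series (true, and provable directly from $[M_j,\mathsf{S}]\le M_{j+1}$ together with Lemmas~\ref{latticeisothm} and~\ref{lattice}, but appearing in the paper only later via Lemma~\ref{quotientcom} and Proposition~\ref{baer}), and that an arbitrary chief factor $N/H$ embeds in some chief series (take a maximal chain through $H<N$ in the modular lattice $\mathrm{Norm}(\mathsf{S})$), which your argument for {\bf(5)}$\Rightarrow${\bf(4)} tacitly uses.
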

\begin{proof}
The proof that {\bf(1)}, {\bf(2)} and {\bf(3)} are equivalent is essentially identical to the proof of the analogous group theory result. We include this here only for completeness. Assume that $G$ has an $\mathsf{S}$-central series, and let $G=N_1\ge N_2\ge\dotsb\ge N_{c+1}=1$ be an $\mathsf{S}$-central series of minimal length, $c$. We prove that 
\begin{equation}\label{central1}N_{c+1-i}(\mathsf{S})\le\zeta_i(\mathsf{S})\end{equation}
for each $i\ge0$ by induction on $i$. Since we have $N_{c+1}(\mathsf{S})=1=\zeta_0(\mathsf{S})$, \eqref{central1} holds for $i=0$. Suppose then \eqref{central1} holds for some $i\ge0$. Since we have
\[N_{c-i}/N_{c-i+1}\le\mathbf{Z}(G/N_{r-i+1}),\]
and $N_{c+1-i}\le\zeta_i(\mathsf{S})$, it follows that
\[N_{c-i}\zeta_i(\mathsf{S})/\zeta_i(\mathsf{S})\le\mathbf{Z}(\mathsf{S}_{G/\zeta_i(\mathsf{S})})=\zeta_{i+1}(\mathsf{S})/\zeta_i(\mathsf{S}).\]
Hence $N_{c-i}$ is a subgroup of $\zeta_{i+1}(\mathsf{S})$, which proves \eqref{central1}. 

Since $N_1=G$, it follows from \eqref{central1} that $\zeta_{c}(\mathsf{S})=G$. Therefore the upper $\mathsf{S}$-central series is an $\mathsf{S}$-central series. This series must have length $c$ by minimality.

 A similar argument shows that
\begin{equation}\label{central2}\gamma_i(\mathsf{S})\le N_i\end{equation}
for each $i$; the inductive step follows from the fact that
\[\gamma_{i+1}(\mathsf{S})=[\gamma_i(\mathsf{S}),\mathsf{S})]\le[N_i,\mathsf{S}]\le N_{i+1}\]
whenever $\gamma_i(\mathsf{S})\le N_i$, since $N_i/N_{i+1}\le\mathbf{Z}(\mathsf{S}_{G/N_{i+1}})$. We deduce from \eqref{central2} that  the lower $\mathsf{S}$-central series is an $\mathsf{S}$-central series of minimal length $c$. It follows that {\bf(1)}, {\bf(2)}, and {\bf(3)} are equivalent.

Next we assume {\bf(3)} and prove {\bf(4)}. Let $1\le i\le c$, and let $H\lhd_{\mathsf{S}}G$ with $\zeta_{i-1}(\mathsf{S})\le H<\zeta_i(\mathsf{S})$. Let $N$ be an $\mathsf{S}$-normal cover of $H$ with $N\le \zeta_i(\mathsf{S})$, so that $N/H$ is a $\mathsf{S}$-chief factor. Since $N/H$ is isomorphic to $(N/\zeta_{i-1}(\mathsf{S}))/(H/\zeta_{i-1}(\mathsf{S}))$, it follows that
\[\mathsf{S}_{N/H}=\left(\mathsf{S}_{\zeta_i(\mathsf{S})/\zeta_{i-1}(\mathsf{S})}\right)_{N/H}=\mathsf{m}(\zeta_i(\mathsf{S})/\zeta_{i-1}(\mathsf{S}))_{N/H}=\mathsf{m}(N/H),\]
so $N/H\le\mathbf{Z}(\mathsf{S}_{G/H})$. Moreover, this means that $N/H$ is abelian and $M\lhd_{\mathsf{S}}G$ for each $H\le M\le N$, so $N/H$ must have prime order. Thus {\bf(4)} is proved.

Statement {\bf(5)} is immediate from {\bf(4)}. Also {\bf(1)} follows immediately from {\bf(5)} since $G$ has at least one $\mathsf{S}$-chief series.
\end{proof}

\section{$\mathsf{S}$-nilpotence}\label{nilpsection}
In this section, we outline the basic properties of $\mathsf{S}$-nilpotence. Recall that a group $G$ is called $\mathsf{S}$-nilpotent for a supercharacter theory $\mathsf{S}$ of $G$ if $G$ has an $\mathsf{S}$-central series. The equivalent conditions of Proposition~\ref{nilclassification} are therefore all equivalent to $G$ being $\mathsf{S}$-nilpotent.

Note that if $G$ is $\mathsf{S}$-nilpotent, then $G$ has a central series and must therefore be nilpotent. Actually a little more can be said. If $\mathsf{T}\preccurlyeq\mathsf{S}$ and $G$ is $\mathsf{S}$-nilpotent, then $[G,\mathsf{T}]\le[G,\mathsf{S}]$ and so it follows from Lemma~\ref{basicproperties} that $G$ is also $\mathsf{T}$-nilpotent. 

We now explore how the concept of $\mathsf{S}$-nilpotence behaves with respect to $\mathsf{S}$-normal subgroups.
\begin{prop}\label{firstfitting}
Let $H$ and $N$ be $\mathsf{S}$-normal. If $H$ is $\mathsf{S}_H$-nilpotent, and $N$ is $\mathsf{S}_N$-nilpotent, then $HN$ is $\mathsf{S}_{HN}$-nilpotent. 
\end{prop}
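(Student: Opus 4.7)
The plan is to induct on $\lvert HN\rvert$, exploiting the elementary identity $\mathbf{Z}(\mathsf{S}_K) = K \cap \mathbf{Z}(\mathsf{S})$ valid for every $\mathsf{S}$-normal subgroup $K$ of $G$. This identity is immediate: since $K$ is a union of $\mathsf{S}$-classes, the $\mathsf{S}_K$-class of any $g\in K$ coincides with its $\mathsf{S}$-class, so $\lvert\mathrm{cl}_{\mathsf{S}_K}(g)\rvert = 1$ if and only if $g \in \mathbf{Z}(\mathsf{S})$.

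If $H$ or $N$ is trivial, the conclusion is immediate. Otherwise, I would use that $H$ is nontrivial and $\mathsf{S}_H$-nilpotent to conclude via Proposition~\ref{nilclassification} that the upper $\mathsf{S}_H$-central series of $H$ terminates at $H$ after a positive number of steps, so $\mathbf{Z}(\mathsf{S}_H) \ne 1$. Setting $Z := H \cap \mathbf{Z}(\mathsf{S})$, this gives a nontrivial $\mathsf{S}$-normal subgroup of $G$ that sits inside $HN \cap \mathbf{Z}(\mathsf{S}) = \mathbf{Z}(\mathsf{S}_{HN})$, and in particular $\lvert HN/Z\rvert < \lvert HN\rvert$.

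Then I would pass to $HN/Z$, which is the product of the $\mathsf{S}_{HN/Z}$-normal subgroups $H/Z$ and $NZ/Z$. Since quotients of $\mathsf{S}$-nilpotent groups by $\mathsf{S}$-normal subgroups are $\mathsf{S}$-nilpotent (just project the upper central series), both $H/Z$ and $N/(N\cap Z) \cong NZ/Z$ are $\mathsf{S}$-nilpotent with respect to their induced supercharacter theories. Using the compatibility $(\mathsf{S}_N)^{H/N} = (\mathsf{S}^{G/N})_{H/N}$ noted at the start of Section~\ref{snormality}, these induced theories coincide with the restrictions of $\mathsf{S}_{HN/Z}$ to $H/Z$ and $NZ/Z$, so the inductive hypothesis yields that $HN/Z$ is $\mathsf{S}_{HN/Z}$-nilpotent. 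Combining this with $Z \le \mathbf{Z}(\mathsf{S}_{HN})$---by lifting an $\mathsf{S}_{HN/Z}$-central series of $HN/Z$ to $HN$ and appending $Z \ge 1$ at the bottom---produces an $\mathsf{S}_{HN}$-central series for $HN$.

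The main obstacle I expect is purely bookkeeping: chasing the various induced supercharacter theories to verify that the theory on $NZ/Z$ viewed as an $\mathsf{S}_{HN/Z}$-normal subgroup really is the same as the theory on $N/(N\cap Z)$ obtained by restricting $\mathsf{S}$ to $N$ and deflating by $N\cap Z$. Concretely, one uses Lemma~\ref{classaction} to see that every $\mathsf{S}$-class contained in $NZ$ has the form $zK$ with $z \in Z$ and $K$ an $\mathsf{S}$-class of $N$, and then checks that both constructions partition $NZ/Z \cong N/(N\cap Z)$ into the same subsets. Once this routine check is made, the inductive machinery goes through without further difficulty.
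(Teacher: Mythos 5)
Your proof is correct, but it takes a genuinely different route from the paper's. The paper first establishes Lemma~\ref{comsplit}, the identity $[HN,\mathsf{S}]=[H,\mathsf{S}][N,\mathsf{S}]$ (proved by a superclass-multiplication argument: every $k\in\mathrm{cl}_{\mathsf{S}}(hn)$ factors as $k_hk_n$ with $k_h\in\mathrm{cl}_{\mathsf{S}}(h)$, $k_n\in\mathrm{cl}_{\mathsf{S}}(n)$), and then deduces by induction on $i$ that $\gamma_i(\mathsf{S}_{HN})=\gamma_i(\mathsf{S}_H)\gamma_i(\mathsf{S}_N)$, so the lower $\mathsf{S}_{HN}$-central series terminates; this is a one-step argument that moreover shows the $\mathsf{S}_{HN}$-nilpotence class of $HN$ is at most the maximum of the classes of $H$ and $N$. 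You instead work from the top of the upper central series, inducting on $\lvert HN\rvert$ by peeling off $Z=\mathbf{Z}(\mathsf{S}_H)=H\cap\mathbf{Z}(\mathsf{S})$, which is nontrivial, $\mathsf{S}$-normal by Lemma~\ref{lattice}, and contained in $\mathbf{Z}(\mathsf{S}_{HN})$. All the steps check out: the identity $\mathbf{Z}(\mathsf{S}_K)=K\cap\mathbf{Z}(\mathsf{S})$ is immediate from the definition of $\mathrm{Cl}(\mathsf{S}_K)$; the quotient direction of Proposition~\ref{baer} that you invoke is justified independently by projecting a central series, so there is no circularity with the paper's ordering of results; and the bookkeeping you flag---that the theory induced on $NZ/Z$ as an $\mathsf{S}_{G/Z}$-normal subgroup agrees with $\mathsf{S}_{N/(N\cap Z)}$ under the natural isomorphism---does go through via Lemma~\ref{classaction}, since $Z\le\mathbf{Z}(\mathsf{S})\le\mathbf{Z}(G)$ forces every $\mathsf{S}$-class in $NZ$ to be a translate $zK$ of a class $K\subseteq N$. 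What your approach buys is independence from Lemma~\ref{comsplit} and its class-product computation; what it costs is the extra induction, the identification of induced theories across an isomorphism, and the quantitative control on the length of the resulting central series that the paper's formula gives for free.
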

The proof of this result follows readily from the following lemma.
\begin{lem}\label{comsplit}
Let $\mathsf{S}\in\mathrm{SCT}(G)$. If $H$ and $N$ are $\mathsf{S}$-normal, then $[HN,\mathsf{S}]=[H,\mathsf{S}][N,\mathsf{S}]$. 
\end{lem}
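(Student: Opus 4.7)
The plan is to prove the two inclusions separately, with the bulk of the work going into the non-trivial direction. For the easy containment $[H,\mathsf{S}][N,\mathsf{S}]\le[HN,\mathsf{S}]$, I would first note that both $[H,\mathsf{S}]$ and $[N,\mathsf{S}]$ are normal in $G$ by Proposition~\ref{sclosure}, so their product is a subgroup. Each factor is visibly contained in $[HN,\mathsf{S}]$, since every generator $h^{-1}k$ with $h\in H$ and $k\in\mathrm{cl}_{\mathsf{S}}(h)$ also qualifies as a generator of $[HN,\mathsf{S}]$ (and likewise for $N$).

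For the reverse containment, I would take a typical generator $g^{-1}k$ of $[HN,\mathsf{S}]$, with $g\in HN$ and $k\in\mathrm{cl}_{\mathsf{S}}(g)$, and write $g=hn$ with $h\in H$ and $n\in N$. The key structural input is that the product of two $\mathsf{S}$-classes is a union of $\mathsf{S}$-classes, as already used in the proof of Proposition~\ref{commnormal}. Applying this to $\mathrm{cl}_{\mathsf{S}}(h)\,\mathrm{cl}_{\mathsf{S}}(n)$, which certainly contains $hn=g$, I conclude that it also contains all of $\mathrm{cl}_{\mathsf{S}}(g)$. Hence $k=h'n'$ for some $h'\in\mathrm{cl}_{\mathsf{S}}(h)$ and $n'\in\mathrm{cl}_{\mathsf{S}}(n)$. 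This is the step where the $\mathsf{S}$-normality hypotheses on $H$ and $N$ are essential: they force $h'\in H$ and $n'\in N$, so that $h^{-1}h'\in[H,\mathsf{S}]$ and $n^{-1}n'\in[N,\mathsf{S}]$.

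To finish, I would perform the short rewriting
\[
g^{-1}k \;=\; n^{-1}h^{-1}h'n' \;=\; \bigl(n^{-1}(h^{-1}h')n\bigr)(n^{-1}n').
\]
The first factor is a $G$-conjugate of $h^{-1}h'\in[H,\mathsf{S}]$ and so lies in $[H,\mathsf{S}]$ by normality (Proposition~\ref{sclosure}); the second factor lies in $[N,\mathsf{S}]$. Thus $g^{-1}k\in[H,\mathsf{S}][N,\mathsf{S}]$, and since such elements generate $[HN,\mathsf{S}]$, the desired inclusion follows.

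The main obstacle is the factorization step $k=h'n'$ with $h'\in\mathrm{cl}_{\mathsf{S}}(h)$ and $n'\in\mathrm{cl}_{\mathsf{S}}(n)$; once that is in hand, the remainder is a one-line normal-subgroup manipulation, and the $\mathsf{S}$-normality hypotheses plug in cleanly to keep the two factors inside their designated commutator subgroups.
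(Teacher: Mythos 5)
Your proof is correct and follows essentially the same route as the paper's: both establish the easy containment from the generators, then use the fact that $\mathrm{cl}_{\mathsf{S}}(h)\,\mathrm{cl}_{\mathsf{S}}(n)$ is a union of $\mathsf{S}$-classes containing $hn$ to factor $k=h'n'$, and finish with the same conjugation rewrite of $(hn)^{-1}k$. The only (harmless) difference is bookkeeping: the paper keeps the conjugating element inside and uses $\mathsf{S}$-normality of $H$ to see $(n^{-1}hn)^{-1}(n^{-1}h'n)$ as a generator of $[H,\mathsf{S}]$, whereas you invoke normality of $[H,\mathsf{S}]$ in $G$; note also that $h^{-1}h'\in[H,\mathsf{S}]$ already follows from $h\in H$ and $h'\in\mathrm{cl}_{\mathsf{S}}(h)$ by definition, without needing $h'\in H$.
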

\begin{proof}Since $H\lhd_{\mathsf{S}}G$, $\mathrm{cl}_{\mathsf{S}}(h)\subseteq H$ for all $h\in H$, which implies $[H,\mathsf{S}]\le[HN,\mathsf{S}]$. Similarly we have $[N,\mathsf{S}]\le[HN,\mathsf{S}]$;  it follows that $[H,\mathsf{S}][N,\mathsf{S}]\subseteq[HN,\mathsf{S}]$. 
We now show the reverse containment. Let $h\in H$ and $n\in N$. Then for each  $L\in\mathrm{Cl}(\mathsf{S})$, there exists a nonnegative integer $a_{h,n,L}$ such that
\[\widehat{\mathrm{cl}_{\mathsf{S}}(h)}\widehat{\mathrm{cl}_{\mathsf{S}}(n)}=\sum_{L\in\mathrm{Cl}(\mathsf{S})}a_{h,n,L} \widehat{L}.\]
Since $hn\in \mathrm{cl}_{\mathsf{S}}(h) \mathrm{cl}_{\mathsf{S}}(n)$, it follows that $a_{h,n,\mathrm{cl}_{\mathsf{S}}(hn)}\neq0$. This means that every element $k$ of $\mathrm{cl}_{\mathsf{S}}(hn)$ has the form $k_hk_n$ for some $k_h\in \mathrm{cl}_{\mathsf{S}}(h)$ and $k_n\in \mathrm{cl}_{\mathsf{S}}(n)$. Therefore, for each $k\in\ \mathrm{cl}_{\mathsf{S}}(hn)$ we have 
\[(hn)^{-1}k=n^{-1}h^{-1}k_hk_n=(n^{-1}h^{-1}n)(n^{-1}k_hn)(n^{-1}k_n).\]
Since $H$ is $\mathsf{S}$-normal, it follows that $\mathrm{cl}_{\mathsf{S}}(n^{-1}hn)=\mathrm{cl}_{\mathsf{S}}(h)=\mathrm{cl}_{\mathsf{S}}(k_h)=\mathrm{cl}_{\mathsf{S}}(n^{-1}k_hn)$.
Therefore, we have 
\[(hn)^{-1}k\in [H,\mathsf{S}][N,\mathsf{S}]\]
and the result follows.
\end{proof}
\begin{proof}[Proof of Proposition~\ref{firstfitting}]
By Lemma~\ref{comsplit}, we have
\[\gamma_2(\mathsf{S}_{HN})=[HN,\mathsf{S}_{HN}]=[HN,\mathsf{S}]=[H,\mathsf{S}][N,\mathsf{S}]=\gamma_2(\mathsf{S}_H)\gamma_2(\mathsf{S}_N),\]
so it follows that
\[\gamma_3(\mathsf{S}_{HN})=[\gamma_2(\mathsf{S}_H)\gamma_2(\mathsf{S}_N),\mathsf{S}]=[\gamma_2(\mathsf{S}_H),\mathsf{S}][\gamma_2(\mathsf{S}_N),\mathsf{S}]=\gamma_3(\mathsf{S}_H)\gamma_3(\mathsf{S}_N).\]
Continuing by induction on $i$, we have that
\[\gamma_i(\mathsf{S}_{HN})=\gamma_i(\mathsf{S}_H)\gamma_i(\mathsf{S}_N)\]
for each $i\ge 1$. Since $H$ is $\mathsf{S}_H$-nilpotent, and $N$ is $\mathsf{S}_N$-nilpotent, $\gamma_i(\mathsf{S}_H)\gamma_i(\mathsf{S}_N)$ is trivial for sufficiently large $i$, as required.
\end{proof}

We remark that in the case that $\mathsf{S}=\mathsf{m}(G)$ and $N\lhd G$, then the property of $N$ being $\mathsf{S}_N$-nilpotent is stronger than the property of $N$ just being nilpotent. Indeed, $N$ is $\mathsf{S}_N$-nilpotent means that $[N,G,\dotsc,G]$ is eventually trivial. So although Proposition~\ref{firstfitting} is reminiscent of Fitting's Theorem, it is not an analog.

\begin{lem}\label{quotientcom}If $N$ is $\mathsf{S}$-normal, then $\gamma_{i}(\mathsf{S}_{G/N})=\gamma_i(\mathsf{S})N/N$ for each $i\ge 1$.\end{lem}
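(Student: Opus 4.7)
The plan is to prove this by induction on $i$. The base case $i=1$ is immediate since $\gamma_1(\mathsf{S}_{G/N})=G/N=\gamma_1(\mathsf{S})N/N$.

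For the inductive step, assume $\gamma_i(\mathsf{S}_{G/N})=\gamma_i(\mathsf{S})N/N$. The key auxiliary fact I would establish first is the following: for any $\mathsf{S}$-normal subgroup $H$ of $G$ containing $N$,
\[
[H/N,\mathsf{S}_{G/N}]=[H,\mathsf{S}]N/N.
\]
To see this, recall that $\mathrm{cl}_{\mathsf{S}_{G/N}}(gN)=\pi(\mathrm{cl}_{\mathsf{S}}(g))$, where $\pi:G\to G/N$ is the canonical projection. So the generators of $[H/N,\mathsf{S}_{G/N}]$ are cosets $(g^{-1}k)N$ with $g\in H$ and $k\in\mathrm{cl}_{\mathsf{S}}(g)N$. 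Writing $k=k'n$ with $k'\in\mathrm{cl}_{\mathsf{S}}(g)$ and $n\in N$, and using that $k'\in H$ because $H$ is $\mathsf{S}$-normal, we get $(g^{-1}k)N=(g^{-1}k')N\in [H,\mathsf{S}]N/N$. The reverse containment is immediate from the definitions.

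Now I apply this with $H=\gamma_i(\mathsf{S})N$, which is $\mathsf{S}$-normal by Lemma~\ref{lattice} (since $\gamma_i(\mathsf{S})$ is $\mathsf{S}$-normal by Corollary~\ref{snormalcommutator}). By the inductive hypothesis and the auxiliary fact,
\[
\gamma_{i+1}(\mathsf{S}_{G/N})=[\gamma_i(\mathsf{S})N/N,\mathsf{S}_{G/N}]=[\gamma_i(\mathsf{S})N,\mathsf{S}]N/N.
\]
By Lemma~\ref{comsplit}, $[\gamma_i(\mathsf{S})N,\mathsf{S}]=[\gamma_i(\mathsf{S}),\mathsf{S}][N,\mathsf{S}]=\gamma_{i+1}(\mathsf{S})[N,\mathsf{S}]$. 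Since $N$ is $\mathsf{S}$-normal we have $[N,\mathsf{S}]\le N$, and hence $[\gamma_i(\mathsf{S})N,\mathsf{S}]N=\gamma_{i+1}(\mathsf{S})N$. This gives $\gamma_{i+1}(\mathsf{S}_{G/N})=\gamma_{i+1}(\mathsf{S})N/N$, completing the induction.

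The main (and really only) technical point is the commutation of the bracket $[\,\cdot\,,\mathsf{S}_{G/N}]$ with passage to the quotient, i.e., the auxiliary identity above. Everything else is a direct invocation of Lemma~\ref{comsplit} together with the fact that $[N,\mathsf{S}]\le N$.
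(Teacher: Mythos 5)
Your proposal is correct and follows essentially the same route as the paper: both hinge on the identity $[M/N,\mathsf{S}_{G/N}]=[M,\mathsf{S}]N/N$ (proved via $\mathrm{cl}_{\mathsf{S}_{G/N}}(gN)=\pi(\mathrm{cl}_{\mathsf{S}}(g))$) and then induct, applying Lemma~\ref{comsplit} to split $[\gamma_i(\mathsf{S})N,\mathsf{S}]$ and absorbing $[N,\mathsf{S}]$ into $N$. The only cosmetic difference is that you spell out the $\mathsf{S}$-normality bookkeeping for $\gamma_i(\mathsf{S})N$ explicitly, which the paper leaves implicit.
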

\begin{proof}
Let $\pi:G\to G/N$ be the canonical projection. Let $g\in G$ and note that $\mathrm{cl}_{\mathsf{S}_{G/N}}(gN)=\pi(\mathrm{cl}_{\mathsf{S}}(g))$. Thus for any $M\lhd_{\mathsf{S}}G$ such that $N\le M$, we have
\begin{align*}
[M/N,\mathsf{S}_{G/N}]&=\inner{(g^{-1}N)(kN):gN\in M/N,\ \,kN\in \mathrm{cl}_{\mathsf{S}_{G/N}}(gN)}\\
&=\inner{\pi(g^{-1}k):g\in M,\ \,k\in \mathrm{cl}_{\mathsf{S}}(g)}\\
&=\pi([M,\mathsf{S}])=[M,\mathsf{S}]N/N.
\end{align*}
Proceeding by induction on $i$, we have for any $i\ge 1$
\begin{align*}\gamma_{i}(\mathsf{S}_{G/N})&=[\gamma_{i-1}(\mathsf{S}_{G/N}),\mathsf{S}_{G/N}]\\
&=[\gamma_{i-1}(\mathsf{S})N/N,\mathsf{S}_{G/N}]\\
&=[\gamma_{i-1}(\mathsf{S})N,\mathsf{S}]N/N\\
&=[\gamma_{i-1}(\mathsf{S}),\mathsf{S}][N,\mathsf{S}]N/N\ \text{(by Lemma~\ref{comsplit})}\\
&=[\gamma_{i-1}(\mathsf{S}),\mathsf{S}]N/N\\
&=\gamma_i(\mathsf{S})N/N.
\end{align*}
\end{proof}

\begin{prop}\label{baer}
Let $\mathsf{S}\in\mathrm{SCT}(G)$, and let $N$ be $\mathsf{S}$-normal. Then $G$ is $\mathsf{S}$-nilpotent if and only if both $N$ is $\mathsf{S}_N$-nilpotent and $G/N$ is $\mathsf{S}_{G/N}$-nilpotent. 
\end{prop}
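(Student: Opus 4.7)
The plan is to attack both implications using the lower $\mathsf{S}$-central series, keying on the simple observation that $\mathrm{cl}_{\mathsf{S}_N}(g) = \mathrm{cl}_{\mathsf{S}}(g)$ for every $g \in N$ (immediate from the definition of $\mathrm{Cl}(\mathsf{S}_N)$ together with the fact that $N$ is a union of $\mathsf{S}$-classes). This lets us freely convert between generators of $[\,\cdot\,,\mathsf{S}]$ and $[\,\cdot\,,\mathsf{S}_N]$ whenever the first argument lies in $N$.

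For the forward direction, suppose $G$ is $\mathsf{S}$-nilpotent, so $\gamma_{c+1}(\mathsf{S}) = 1$ for some $c$. That $G/N$ is $\mathsf{S}_{G/N}$-nilpotent is immediate from Lemma~\ref{quotientcom}: $\gamma_{c+1}(\mathsf{S}_{G/N}) = \gamma_{c+1}(\mathsf{S})N/N = N/N$. For $N$, I would prove by induction on $i$ that $\gamma_i(\mathsf{S}_N) \le \gamma_i(\mathsf{S})$. The base case $\gamma_1(\mathsf{S}_N) = N \le G = \gamma_1(\mathsf{S})$ is trivial, and for the step a generator $g^{-1}k$ of $\gamma_i(\mathsf{S}_N) = [\gamma_{i-1}(\mathsf{S}_N),\mathsf{S}_N]$ has $g \in \gamma_{i-1}(\mathsf{S}_N) \subseteq N$ and $k \in \mathrm{cl}_{\mathsf{S}_N}(g) = \mathrm{cl}_{\mathsf{S}}(g)$, hence it is a generator of $[\gamma_{i-1}(\mathsf{S}),\mathsf{S}] = \gamma_i(\mathsf{S})$ by the inductive hypothesis. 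Evaluating at $i = c+1$ gives $\gamma_{c+1}(\mathsf{S}_N) = 1$.

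For the backward direction, assume $\gamma_{c+1}(\mathsf{S}_N) = 1$ and $\gamma_{d+1}(\mathsf{S}_{G/N}) = 1$. Again by Lemma~\ref{quotientcom}, $\gamma_{d+1}(\mathsf{S})N/N = 1$, so $\gamma_{d+1}(\mathsf{S}) \le N$. I would then show by induction on $i \ge 0$ that
\[
\gamma_{d+1+i}(\mathsf{S}) \le \gamma_{1+i}(\mathsf{S}_N).
\]
The case $i=0$ is what we just established. For the step, assuming $\gamma_{d+1+i}(\mathsf{S}) \le \gamma_{1+i}(\mathsf{S}_N) \le N$, any generator $g^{-1}k$ of $\gamma_{d+2+i}(\mathsf{S}) = [\gamma_{d+1+i}(\mathsf{S}),\mathsf{S}]$ has $g \in N$, so $\mathrm{cl}_{\mathsf{S}}(g) = \mathrm{cl}_{\mathsf{S}_N}(g)$, placing $g^{-1}k$ among the generators of $[\gamma_{1+i}(\mathsf{S}_N),\mathsf{S}_N] = \gamma_{2+i}(\mathsf{S}_N)$. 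Setting $i = c$ gives $\gamma_{d+c+1}(\mathsf{S}) \le \gamma_{c+1}(\mathsf{S}_N) = 1$, so $G$ is $\mathsf{S}$-nilpotent.

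There is no real obstacle: the entire argument reduces to bookkeeping on generating sets once one notices that $\mathsf{S}$-normality of $N$ forces $\mathsf{S}$- and $\mathsf{S}_N$-classes to agree on elements of $N$. The only mild care needed is in the backward direction, where we must observe that once an $\mathsf{S}$-term of the lower series lands inside $N$, all subsequent $\mathsf{S}$-terms remain inside $N$ (since the generators live in $N$), which is exactly what permits iterating the $\mathsf{S}$/$\mathsf{S}_N$ identification.
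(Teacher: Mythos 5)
Your proof is correct and follows essentially the same route as the paper: both directions rest on Lemma~\ref{quotientcom} for the quotient and on the identification $\mathrm{cl}_{\mathsf{S}_N}(g)=\mathrm{cl}_{\mathsf{S}}(g)$ for $g\in N$ to compare the lower $\mathsf{S}$-central and $\mathsf{S}_N$-central series. You merely spell out the inductions ($\gamma_i(\mathsf{S}_N)\le\gamma_i(\mathsf{S})$ and $\gamma_{d+1+i}(\mathsf{S})\le\gamma_{1+i}(\mathsf{S}_N)$) that the paper states without detail.
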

\begin{proof}
Since $[N,\mathsf{S}_N]=[N,\mathsf{S}]\le[G,\mathsf{S}]$, it follows that $N$ is $\mathsf{S}_N$-nilpotent. The fact that $G/N$ is $\mathsf{S}_{G/N}$-nilpotent follows from Lemma~\ref{quotientcom}.

Suppose then $N$ is $\mathsf{S}_N$-nilpotent and $G/N$ is $\mathsf{S}_{G/N}$-nilpotent. By Lemma~\ref{quotientcom}, we have $\gamma_i(\mathsf{S})\le N$ for some $i$. So $\gamma_{i+j-1}(\mathsf{S})\le\gamma_{j}(\mathsf{S}_N)$ for each $j\ge 1$. Since $N$ is $\mathsf{S}_N$-nilpotent, $\gamma_{j}(\mathsf{S}_N)$ is eventually trivial, whence so is $\gamma_i(\mathsf{S})$.
\end{proof}

One characterization of finite nilpotent groups is that these are the groups in which every Sylow subgroup is normal. However, this result does not generalize to $\mathsf{S}$-nilpotent groups. Specifically, a Sylow subgroup of the $\mathsf{S}$-nilpotent group $G$ need not be $\mathsf{S}$-normal, as this next example shows.
\begin{exam}
Let $G=C_{pq}$ (the cyclic group of order $pq$), where $p$ and $q$ are distinct primes, and let $x$ be a generator of $G$. Then $P=\inner{x^q}$ is the unique Sylow $p$-subgroup of $G$, and $Q=\inner{x^p}$ is the unique Sylow $q$-subgroup of $G$. It is easy to check that $\mathsf{S}$ is a supercharacter theory of $G$ where
\[\mathrm{Ch}(\mathsf{S})=\bigl\{\mathrm{Ind}_P^G(\lambda):\mathds{1}\neq\lambda\in\mathrm{Irr}(P)\bigr\}\cup\mathrm{Irr}(G/P)\]
and
\[\mathrm{Cl}(\mathsf{S})=\bigl\{\{g\}:g \in P\bigr\}\cup\bigl\{gP:g \in Q\setminus\{1\}\bigr\}.\]
Note that $G>P>1$ is an $\mathsf{S}$-chief series, so every $\mathsf{S}$-chief factor has prime order. Moreover, we have $\mathsf{S}_{G/P}=\mathsf{m}(G/P)$ and $\mathsf{S}_P=\mathsf{m}(P)$, so $\mathsf{S}\in\mathrm{SCT}_{\mathsf{nil}}(G)$ by Proposition~\ref{nilclassification}. However, since every element of $Q$ lies in a different $\mathsf{S}$-class, $Q$ is not $\mathsf{S}$-normal.
\end{exam}
\begin{rem}
The supercharacter theory $\mathsf{S}$ is an example of a $\ast$-{\it product} (see \cite{AH12} for details). Specifically, $\mathsf{S}=\mathsf{m}(P)\ast\mathsf{m}(G/P)$.
\end{rem}
Since the Sylow subgroups of the $\mathsf{S}$-nilpotent group $G$ are not necessarily $\mathsf{S}$-normal, one cannot detect $\mathsf{S}$-nilpotence from the $\mathsf{S}$-table as easily as in the case $\mathsf{S}=\mathsf{m}(G)$. However  $\mathsf{S}$-nilpotence can still be detected from the $\mathsf{S}$-character table.
\begin{prop}\label{nilchartable}
The lower $\mathsf{S}$-central series can be found from the $\mathsf{S}$-character table.
\end{prop}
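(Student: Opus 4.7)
The plan is an induction on $i$. Since $\gamma_1(\mathsf{S}) = G$ is trivially read off the table, it suffices to show that for any $\mathsf{S}$-normal subgroup $N$ already identified as a union of superclasses, the commutator $[N,\mathsf{S}]$ is likewise identifiable; applying this to $N = \gamma_i(\mathsf{S})$ then recovers $\gamma_{i+1}(\mathsf{S})$, and iteration produces the whole series.

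First I would record what is directly visible from the $\mathsf{S}$-character table. The kernels $\ker(\chi) = \{K \in \mathrm{Cl}(\mathsf{S}) : \chi(K) = \chi(1)\}$ and the character centers $\mathbf{Z}(\chi) = \{g \in G : \norm{\chi(g)} = \chi(1)\}$ can be read off directly, and the superclass sizes $\norm{\mathrm{cl}_\mathsf{S}(g)}$ are recovered via column orthogonality (Theorem~\ref{column}). By \cite[Proposition 2.1]{EM11}, the $\mathsf{S}$-normal subgroups are precisely the intersections of families of $\mathsf{S}$-character kernels, so the entire lattice $\mathrm{Norm}(\mathsf{S})$ can be enumerated. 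For each $M \in \mathrm{Norm}(\mathsf{S})$, the characters of $\mathsf{S}^{G/M}$ are those $\chi \in \mathrm{Ch}(\mathsf{S})$ with $M \le \ker(\chi)$, and Proposition~\ref{centercharacter}(3) applied to $\mathsf{S}^{G/M}$ then identifies $\mathbf{Z}(\mathsf{S}^{G/M})$ as a computable union of cosets of $M$.

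The crux, which I expect to be the main technical step, is the following characterization: for $N \lhd_{\mathsf{S}} G$, $[N,\mathsf{S}]$ is the smallest $\mathsf{S}$-normal subgroup $M \le N$ satisfying $N/M \le \mathbf{Z}(\mathsf{S}^{G/M})$. To prove it I would mirror the proof of Proposition~\ref{gsproperty}: the containment $[N,\mathsf{S}] \le M$ holds if and only if $\mathrm{cl}_\mathsf{S}(g) \subseteq gM$ for every $g \in N$, if and only if each coset of $M$ lying inside $N$ is a single $\mathsf{S}^{G/M}$-class, i.e.\ $N/M \le \mathbf{Z}(\mathsf{S}^{G/M})$. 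That the minimum is attained is guaranteed by Corollary~\ref{snormalcommutator}, which ensures $[N,\mathsf{S}] \lhd_{\mathsf{S}} G$. Combining this characterization with the visibility observations above, $[N,\mathsf{S}]$ is then located by scanning the finite lattice $\mathrm{Norm}(\mathsf{S})$ for the smallest $M \le N$ meeting both conditions, completing the inductive step and hence the proposition.
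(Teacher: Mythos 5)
Your argument is correct, but it locates $[N,\mathsf{S}]$ by a different mechanism than the paper. The paper works internally: at each step it restricts the $\mathsf{S}$-characters to $N=\gamma_{i-1}(\mathsf{S})$, detects which restrictions are multiples of linear characters of $N$ via the condition $\mathbf{Z}(\mathrm{Res}^G_N(\chi))=N$, and reads off $\gamma_i(\mathsf{S})=[N,\mathsf{S}]$ as the intersection of the kernels of exactly those restrictions --- an explicit closed formula obtained from the corollary to Proposition~\ref{gsproperty} applied to the induced theory $\mathsf{S}_N$. You instead work externally, through quotients: your crux is the relative version of Proposition~\ref{gsproperty}, namely that for $M\lhd_{\mathsf{S}}G$ with $M\le N$ one has $[N,\mathsf{S}]\le M$ if and only if $N/M\le\mathbf{Z}(\mathsf{S}^{G/M})$, which together with Corollary~\ref{snormalcommutator} (and the fact that $[N,\mathsf{S}]\le N$ because $N$ is $\mathsf{S}$-normal) exhibits $[N,\mathsf{S}]$ as the unique minimum of a family of $\mathsf{S}$-normal subgroups, each testable from the table via Proposition~\ref{centercharacter}(3) applied to the quotient theories. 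Both routes rest on the same duality between supercharacter centers and superclass sizes; the paper's version buys a direct formula (one intersection of kernels per step of the series), whereas yours requires enumerating the lattice $\mathrm{Norm}(\mathsf{S})$ and computing $\mathbf{Z}(\mathsf{S}^{G/M})$ for each candidate $M$, but in exchange avoids any discussion of restricted supercharacters and isolates a clean structural characterization of $[N,\mathsf{S}]$ that generalizes Proposition~\ref{gsproperty} and is of independent interest.
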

\begin{proof}
Note that a character $\chi$ is a multiple of a linear character if and only if $\mathbf{Z}(\chi)=G$. Therefore, even if the supercharacters are scaled, one may determine $[G,\mathsf{S}]$ from the $\mathsf{S}$-character table, since
\[[G,\mathsf{S}]=\bigcap_{\substack{\chi\in\mathrm{Ch}(\mathsf{S})\\\chi(1)=1}}\ker(\chi)=\bigcap_{\substack{\chi\in\mathrm{Ch}(\mathsf{S})\\\mathbf{Z}(\chi)=G}}\ker(\chi).\]
For any $i\ge 2$, one obtains the restricted theory $\mathsf{S}_{\gamma_{i}(\mathsf{S})}$ by restricting the $\mathsf{S}$-characters and one may determine which restricted $\mathsf{S}$-characters are multiples of linear characters, we actually have for each $i\ge 2$ that
\[\gamma_i(\mathsf{S})=\bigcap_{\psi\in X_i}\ker(\chi),\]
where
\[X_i=\bigl\{\mathrm{Res}^G_{\gamma_{i-1}(\mathsf{S})}(\chi):\chi\in\mathrm{Ch}(\mathsf{S})\  \,\text{and}\ \,\mathbf{Z}(\mathrm{Res}^G_{\gamma_{i-1}(\mathsf{S})}(\chi))=\gamma_{i-1}(\mathsf{S})\bigr\},\]
and this can be read from the $\mathsf{S}$-character table.
\end{proof}
Although $\mathsf{S}$-character degrees and $\mathsf{S}$-class sizes do not typically divide $\norm{G}$, this does happen in the case that $G$ is $\mathsf{S}$-nilpotent.

\begin{prop}\label{snildiv}
Let $G$ be $\mathsf{S}$-nilpotent. Then $\chi(1)$ divides $\norm{G}$ for each $\chi\in\mathrm{Ch}(\mathsf{S})$, and $\norm{\mathrm{cl}_{\mathsf{S}}(g)}$ divides $\norm{G}$ for each $g\in G$.
\end{prop}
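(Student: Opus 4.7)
The plan is to prove both divisibilities by strong induction on $\norm{G}$, with the trivial base case $G=1$. The two claims admit slightly different reductions, so I would handle them separately.

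For the $\mathsf{S}$-class size claim, I would pick a minimal nontrivial $\mathsf{S}$-normal subgroup $N$ of $G$. Because $G$ is $\mathsf{S}$-nilpotent, Proposition~\ref{nilclassification}\textbf{(4)} forces $\norm{N}$ to be prime and $\mathsf{S}_N=\mathsf{m}(N)$; in particular every element of $N$ is a singleton $\mathsf{S}$-class, so $N\le\mathbf{Z}(\mathsf{S})$. Theorem~\ref{degreediv}\textbf{(2)} gives that $\norm{\mathrm{cl}_{\mathsf{S}_{G/N}}(gN)}$ divides $\norm{\mathrm{cl}_{\mathsf{S}}(g)}$, and since $N\le\mathbf{Z}(\mathsf{S})$, Proposition~\ref{otherclassdivide} tells us the quotient divides $\norm{N}$. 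By Proposition~\ref{baer}, $G/N$ is $\mathsf{S}_{G/N}$-nilpotent, so the inductive hypothesis applied to $(G/N,\mathsf{S}_{G/N})$ yields $\norm{\mathrm{cl}_{\mathsf{S}_{G/N}}(gN)}$ divides $\norm{G/N}$. Multiplying the two divisibilities gives $\norm{\mathrm{cl}_{\mathsf{S}}(g)}$ divides $\norm{G}$.

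For the supercharacter degree claim, I would instead pass to $M=[G,\mathsf{S}]$, which is $\mathsf{S}$-normal by Proposition~\ref{commnormal} and a proper subgroup of $G$, since $\mathsf{S}$-nilpotence of nontrivial $G$ forces $\gamma_2(\mathsf{S})<G$. Fix $\chi\in\mathrm{Ch}(\mathsf{S})$. If $M\le\ker(\chi)$, then the corollary to Proposition~\ref{gsproperty} shows that $\chi$ is linear, so $\chi(1)=1$ divides $\norm{G}$. Otherwise $\chi\in\mathrm{Ch}(\mathsf{S}\mid M)$, and any constituent $\psi\in\mathrm{Ch}(\mathsf{S}_M)$ of $\mathrm{Res}^G_M(\chi)$ satisfies $\chi(1)/\psi(1)$ divides $\norm{G:M}$ by Proposition~\ref{actiondivide}. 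Proposition~\ref{baer} gives that $M$ is $\mathsf{S}_M$-nilpotent, so the inductive hypothesis applied to $(M,\mathsf{S}_M)$, whose underlying group is smaller, yields $\psi(1)$ divides $\norm{M}$. Therefore $\chi(1)=\bigl(\chi(1)/\psi(1)\bigr)\psi(1)$ divides $\norm{G:M}\cdot\norm{M}=\norm{G}$.

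The main obstacle is the faithful case of the degree claim: when $\ker(\chi)$ is too small, no reduction via the quotient $G/\ker(\chi)$ is available. The key structural input is that the \emph{commutator-type} subgroup $M=[G,\mathsf{S}]$ is automatically proper for nontrivial $\mathsf{S}$-nilpotent $G$, and Proposition~\ref{actiondivide} is tailor-made to supply the divisibility $\chi(1)/\psi(1)\mid\norm{G:M}$ that pairs cleanly with the inductive bound $\psi(1)\mid\norm{M}$ on the $\mathsf{S}_M$-nilpotent subgroup $M$.
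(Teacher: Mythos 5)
Your proof is correct and follows essentially the same route as the paper's: induction on $\norm{G}$, reducing the degree claim to $M=[G,\mathsf{S}]<G$ via Proposition~\ref{actiondivide} and the class-size claim to a central $\mathsf{S}$-normal subgroup via Proposition~\ref{otherclassdivide}. The only cosmetic difference is that for class sizes the paper quotients by all of $\mathbf{Z}(\mathsf{S})$ (nontrivial by $\mathsf{S}$-nilpotence) rather than by a minimal $\mathsf{S}$-normal subgroup, which spares the detour through the chief-factor characterization of Proposition~\ref{nilclassification}.
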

\begin{proof}
Assume that the result does not hold in general; let $G$ be a group of minimal order for which the result fails, and let $\mathsf{S}\in\mathrm{SCT}_{\mathsf{nil}}(G)$ witness this failure. 

Since $G$ is $\mathsf{S}$-nilpotent, it follows that $H=[G,\mathsf{S}]<\norm{G}$. Note that $H$ must be nontrivial by the choice of $G$. Let $\chi\in\mathrm{Ch}(\mathsf{S})$, and let $\psi\in\mathrm{Ch}(\mathsf{S}_H)$ be such that $\inner{\psi,\mathrm{Res}^G_H(\chi)}>0$. Since $H$ is $\mathsf{S}_H$-nilpotent, $\psi(1)$ must divide $\norm{H}$ by minimality. Moreover, $\chi(1)/\psi(1)$ divides $\norm{G:H}$ by Proposition~\ref{actiondivide}. But this means that $\chi(1)$ divides $\norm{G}$.

Also $Z=\mathbf{Z}(\mathsf{S})>1$ since $G$ is $\mathsf{S}$-nilpotent, and $Z<G$ by the choice of $G$. Let $g\in G$, and note that $\norm{\mathrm{cl}_{\mathsf{S}}(g)}/\norm{\mathrm{cl}_{\mathsf{S}_{G/Z}}(gZ)}$ divides $\norm{Z}$ by Proposition~\ref{otherclassdivide}. Now, since $G/Z$ is $\mathsf{S}_{G/Z}$-nilpotent, it follows that $/\norm{\mathrm{cl}_{\mathsf{S}_{G/Z}}(gZ)}$ divides $\norm{G/Z}$ by the minimality of $G$. Hence $/\norm{\mathrm{cl}_{\mathsf{S}}(g)}$ divides $\norm{G}$ as well. But this violates the choice of $G$. Hence no such $G$ exists.
\end{proof}

As a corollary to Proposition~\ref{snildiv}, we have Theorem~\ref{cc}.
\begin{proof}[Proof of Theorem~\ref{cc}]
By Proposition~\ref{snildiv}, we need only show that $\chi(1)$ divides $\norm{G:\ker(\chi)}$ for each $\chi\in\mathrm{Ch}(\mathsf{S})$. To that end, let $\chi\in\mathrm{Ch}(\mathsf{S})$. Since $G$ is $\mathsf{S}$-nilpotent and $\ker(\chi)\lhd_{\mathsf{S}}G$, we have that $G/\ker(\chi)$ is $\mathsf{S}_{G/\ker(\chi)}$-nilpotent by Proposition~\ref{baer}. Since $\chi$ may be considered an element of $\mathrm{Ch}(\mathsf{S}_{G/\ker(\chi)})$, the result follows by Proposition~\ref{snildiv}.
\end{proof}
\begin{rem}\label{snildivrem}
The contrapositive of Theorem~\ref{cc} could be used to determine if $G$ is $\mathsf{S}$-nilpotent from its $\mathsf{S}$-character table. This also implies that if $G$ is $\mathsf{S}$-nilpotent and $\mathsf{T}\preccurlyeq\mathsf{S}$, then $\chi(1)$ and $\norm{K}$ are divisors of $\norm{G}$ for all $\chi\in\mathrm{Ch}(\mathsf{T})$ and $K\in\mathrm{Cl}(\mathsf{T})$.
\end{rem}

If $G$ is nilpotent, then it is straightforward to show that $\chi(1)^2$ divides $\norm{G:\ker(\chi)}$ for each $\chi\in\mathrm{Irr}(G)$. Remarkably, the converse of this statement also holds, as was proved by Gagola and Lewis in \cite{GL99}.  All computational evidence seems to suggest the following generalization.
\begin{conj}\label{kerconj}
Let $\mathsf{S}$ be a supercharacter theory of $G$ and suppose that $\chi(1)$ divides $\norm{G:\ker(\chi)}$ for all $\chi\in\mathrm{Ch}(\mathsf{S})$. Then $G$ is $\mathsf{S}$-nilpotent. In particular, $G$ is nilpotent.
\end{conj}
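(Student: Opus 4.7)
The plan is strong induction on $\norm{G}$. Let $Z=\mathbf{Z}(\mathsf{S})$, and assume the conjecture for all smaller groups. If $Z>1$, I would pass to $(G/Z,\mathsf{S}_{G/Z})$: since $Z\le\mathbf{Z}(\chi)$ for every $\chi\in\mathrm{Ch}(\mathsf{S})$ by Proposition~\ref{centercharacter}{\bf(3)}, any $\chi$ with $Z\nleq\ker(\chi)$ deflates to $0$, so the $\mathsf{S}^{G/Z}$-characters are exactly the $\mathrm{Def}^G_{G/Z}(\chi)$ for those $\chi\in\mathrm{Ch}(\mathsf{S})$ with $Z\le\ker(\chi)$; each such deflation has degree $\chi(1)$ and kernel $\ker(\chi)/Z$, so the divisibility $\chi(1)\divides\norm{G/Z:\ker(\chi)/Z}=\norm{G:\ker(\chi)}$ descends. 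Induction would then give that $G/Z$ is $\mathsf{S}_{G/Z}$-nilpotent. Because $Z\le\mathbf{Z}(G)$, the induced theory $\mathsf{S}_Z$ has only singleton classes, so $Z$ is trivially $\mathsf{S}_Z$-nilpotent, and Proposition~\ref{baer} assembles $\mathsf{S}$-nilpotence of $G$.

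The entire conjecture thus reduces to proving that the divisibility hypothesis forces $\mathbf{Z}(\mathsf{S})>1$. A useful first observation is $\bigcap_{\chi\in\mathrm{Ch}(\mathsf{S})}\ker(\chi)\subseteq\mathbf{Z}(\mathsf{S})$: if $\chi(g)=\chi(1)$ for every supercharacter $\chi$, then Theorem~\ref{column} applied at $g$ and at $1$ yields
\[
\frac{\norm{G}}{\norm{\mathrm{cl}_{\mathsf{S}}(g)}}=\sum_{\chi\in\mathrm{Ch}(\mathsf{S})}\frac{\norm{\chi(g)}^2}{\chi(1)}=\sum_{\chi\in\mathrm{Ch}(\mathsf{S})}\chi(1)=\norm{G},
\]
so $g\in\mathbf{Z}(\mathsf{S})$. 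This narrows the remaining task to the \emph{faithful} case $\bigcap\ker(\chi)=1$, where the $\mathsf{S}$-characters collectively separate the identity.

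The main obstacle is precisely this faithful case, which is the direct supercharacter analogue of the hard direction of Gagola--Lewis~\cite{GL99}. The plan is to promote the supercharacter hypothesis to the classical inequality $\vartheta(1)^2\divides\norm{G:\ker(\vartheta)}$ for every irreducible constituent $\vartheta$ of every $\chi\in\mathrm{Ch}(\mathsf{S})$. The decomposition $\chi=\sum_{\vartheta}\vartheta(1)\vartheta$, together with the linear supercharacter action on $\mathrm{Ch}(\mathsf{S})$ (Lemma~\ref{linearaction}, Proposition~\ref{actiondivide}) and the Galois action on $\mathrm{Irr}(\chi)$, should allow one to trade factors of the orbit size $\norm{\mathrm{Irr}(\chi)}$ against the index of the appropriate stabilizer and so distribute $\chi(1)\divides\norm{G:\ker(\chi)}$ across the orbits of irreducible constituents. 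Once the Gagola--Lewis inequality is secured for each $\vartheta$, $G$ is nilpotent by~\cite{GL99}, and one would then try to combine Theorem~\ref{intropgp} on the Sylow factors with the $\ast$-product machinery of~\cite{AH12} to assemble an $\mathsf{S}$-central series. The genuinely hard step is the upgrade: constituents of a single supercharacter can carry unequal degrees and unequal kernels, so the divisibility of $\chi(1)$ by $\norm{G:\ker(\chi)}$ does not obviously refine to each $\vartheta$ individually. Overcoming this disparity — or finding a way to obtain $\mathbf{Z}(\mathsf{S})>1$ without routing through the classical theorem — is precisely what keeps the statement at the level of a conjecture.
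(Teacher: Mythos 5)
This statement is labeled as a conjecture in the paper and is explicitly left open there (the text says only that computational evidence supports it, and that it holds for $p$-groups via Theorem~\ref{intropgp}), so there is no proof of the paper's to compare against; the relevant question is whether your proposal closes the gap, and it does not --- as you yourself acknowledge. Your reduction is sound and is essentially the same induction the paper runs in Proposition~\ref{snildiv}: if $\mathbf{Z}(\mathsf{S})>1$, the divisibility hypothesis passes to $(G/\mathbf{Z}(\mathsf{S}),\mathsf{S}_{G/\mathbf{Z}(\mathsf{S})})$ because $\mathrm{Irr}(G/N)$ is a union of parts of $\mathcal{X}_{\mathsf{S}}$ for any $\mathsf{S}$-normal $N$, and Proposition~\ref{baer} then assembles $\mathsf{S}$-nilpotence. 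So the conjecture is indeed equivalent to: the hypothesis forces $\mathbf{Z}(\mathsf{S})>1$. One small correction: your ``useful first observation'' is vacuous rather than a genuine reduction, since $\sum_{\chi\in\mathrm{Ch}(\mathsf{S})}\chi=\rho_G$ implies $\bigcap_{\chi\in\mathrm{Ch}(\mathsf{S})}\ker(\chi)=1$ for \emph{every} supercharacter theory; there is no non-faithful case to discard.

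The genuine gap is the step you flag yourself, and it is worth being precise about why the proposed route is unlikely to work as sketched. First, the upgrade from $\chi(1)\divides\norm{G:\ker(\chi)}$ to $\vartheta(1)^2\divides\norm{G:\ker(\vartheta)}$ for each $\vartheta\in\mathrm{Irr}(\chi)$ is unsupported: $\chi(1)=\sum_{\vartheta\in\mathrm{Irr}(\chi)}\vartheta(1)^2$, the constituents can have different degrees and different kernels with $\ker(\chi)=\bigcap_\vartheta\ker(\vartheta)$ strictly smaller than each $\ker(\vartheta)$, and neither Lemma~\ref{linearaction} nor Proposition~\ref{actiondivide} gives any handle on individual constituents (both operate at the level of whole supercharacters, and Proposition~\ref{actiondivide} already requires $[G,\mathsf{S}]\le N$, which is close to what one is trying to prove). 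Second, even granting nilpotence of $G$ via Gagola--Lewis, the final assembly is not routine: for a nilpotent group that is not a $p$-group, $\mathsf{S}$ need not decompose as a $\ast$-product over Sylow subgroups, and the Sylow subgroups need not be $\mathsf{S}$-normal (the paper's own $C_{pq}$ example), so Theorem~\ref{intropgp} cannot simply be applied factor by factor. Your write-up is a reasonable and honest research plan, but it is not a proof, and the statement should remain a conjecture.
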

We remark that the condition $\chi(1)^2$ divides $\norm{G}$ for all $\chi\in\mathrm{Irr}(G)$ is not enough to obtain the Gagola--Lewis result. However, it has been proved (using the classification theorem) that $G$ cannot be simple in this case --- in fact $\mathbf{F}(G)>1$ \cite{SGJr05}. 

Although we do not know if Conjecture~\ref{simple} holds in general, it does hold in the case that $G$ is a $p$-group.

\begin{proof}[Proof of Theorem~\ref{intropgp}]
We have already seen in Theorem~\ref{cc} that {\bf(1)} implies both {\bf(2)} and {\bf(3)}. 

We will now show that {\bf(2)} implies {\bf(1)}. By Theorem~\ref{degreediv}, it suffices to show that $[G,\mathsf{S}]<G$. Since $p\divides\chi(1)$ for each $\chi\in\mathrm{Ch}(\mathsf{S})$ and
\[\norm{G}=\sum_{\chi\in\mathrm{Ch}(\mathsf{S})}\chi(1)=\norm{G:[G,\mathsf{S}]}+\sum_{\chi\in\mathrm{Ch}(\mathsf{S}\mid[G,\mathsf{S}])}\chi(1),\]
it follows that $p$ divides $\norm{G:[G,\mathsf{S}]}$. In particular, we have $[G,\mathsf{S}]<G$. 


Next we show that {\bf(3)} implies {\bf(1)}, which will complete the proof. By Theorem~\ref{otherclassdivide}, it suffices to show that 
$\mathbf{Z}(\mathsf{S})>1$. Since $p\divides\norm{K}$ for each $K\in\mathrm{Cl}(\mathsf{S})$, $p$ must divide $\norm{\mathbf{Z}(\mathsf{S})}$ since
\[\norm{G}=\sum_{K\in\mathrm{Cl}(\mathsf{S})}\norm{K}=\norm{\mathbf{Z}(\mathsf{S})}+\sum_{\substack{K\in\mathrm{Cl}(\mathsf{S})\\\norm{K}>1}}\norm{K}.\]
Therefore $\mathbf{Z}(\mathsf{S})$ is nontrivial. 

\end{proof}

We obtain the following corollary that guarantees the existence of an $\mathsf{S}$-normal version of the $p$-core $\mathbf{O}_p(G)$.
\begin{cor}
Let $\mathsf{S}$ be a supercharacter theory of $G$, and let $p$ be a prime divisor of $\norm{G}$. Then there exists a unique largest $\mathsf{S}$-normal $p$-subgroup $P$ of $G$ for which $P$ is $\mathsf{S}_P$-nilpotent. In particular, this is the unique largest $\mathsf{S}$-normal $p$-subgroup $P$ for which every $\mathsf{S}_P$-class is a $p$-power.
\end{cor}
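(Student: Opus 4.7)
The plan is to construct $P$ as the product of all $\mathsf{S}$-normal $p$-subgroups of $G$ that are $\mathsf{S}$-nilpotent in their induced theories, and to verify that this product inherits the required properties. Let $\mathcal{P}$ denote the collection of $\mathsf{S}$-normal $p$-subgroups $H$ of $G$ satisfying that $H$ is $\mathsf{S}_H$-nilpotent. Then $\mathcal{P}$ is nonempty, as the trivial subgroup belongs to it, and $\mathcal{P}$ is finite because $G$ is.

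The first step is to show that $\mathcal{P}$ is closed under products. If $H_1, H_2 \in \mathcal{P}$, then $H_1 H_2$ is $\mathsf{S}$-normal by Lemma~\ref{lattice}, is a $p$-subgroup since both $H_i$ are normal $p$-subgroups, and is $\mathsf{S}_{H_1 H_2}$-nilpotent by Proposition~\ref{firstfitting}. Iterating this over the finitely many elements of $\mathcal{P}$ shows that
\[P \coloneqq \prod_{H \in \mathcal{P}} H\]
itself lies in $\mathcal{P}$. By construction, $P$ contains every element of $\mathcal{P}$, which establishes both the maximality and uniqueness claimed in the first sentence.

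For the "in particular" claim, I would apply Theorem~\ref{intropgp} to the $p$-group $P$ equipped with $\mathsf{S}_P$: $P$ is $\mathsf{S}_P$-nilpotent if and only if every $\mathsf{S}_P$-class has size dividing $\norm{P}$. Since $\norm{P}$ is a $p$-power, its divisors are exactly the $p$-powers bounded above by $\norm{P}$; and any $\mathsf{S}_P$-class, being a subset of $P$, has size at most $\norm{P}$. Hence "every $\mathsf{S}_P$-class size divides $\norm{P}$" is equivalent to "every $\mathsf{S}_P$-class has $p$-power size", so the two formulations pick out the same largest subgroup.

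The principal step that must be gotten right is the closure of $\mathcal{P}$ under products, which rests entirely on Proposition~\ref{firstfitting}; without it, one could only produce a maximal (not maximum) $\mathsf{S}$-normal $\mathsf{S}$-nilpotent $p$-subgroup. With that proposition in hand, the corollary follows essentially formally, and the resulting subgroup $P$ serves as a supercharacter-theoretic analog of the classical $p$-core $\mathbf{O}_p(G)$.
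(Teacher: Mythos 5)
Your proposal is correct and matches the argument the paper intends: the corollary is stated without explicit proof precisely because it follows from Proposition~\ref{firstfitting} (closure of $\mathsf{S}$-normal $\mathsf{S}_H$-nilpotent subgroups under products, together with Lemma~\ref{lattice}) and Theorem~\ref{intropgp} (the class-size characterization for $p$-groups), which is exactly the route you take. No gaps.
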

\begin{rem}\label{pgrpscts}
In Remark~\ref{snildivrem}, we observed that if $G$ is $\mathsf{S}$-nilpotent and $\mathsf{T}\preccurlyeq\mathsf{S}$, then $\chi(1)$ and $\norm{K}$ are divisors of $\norm{G}$ for each $\chi\in\mathrm{Ch}(\mathsf{T})$ and $K\in\mathrm{Cl}(\mathsf{T})$. For $p$-groups, we get a similar but stronger result as a consequence of Theorem~\ref{intropgp}. Let $G$ is a $p$-group, let $\mathsf{S}\in\mathrm{SCT}(G)$ satisfy {\bf(2)} (equivalently {\bf(3)}) of Theorem~\ref{intropgp}, and let $\mathsf{T}\preccurlyeq\mathsf{S}$. Then $G$ is $\mathsf{S}$-nilpotent; hence $G$ is also $\mathsf{T}$-nilpotent. So $\mathsf{T}$ satisfies {\bf(2)} and {\bf(3)} of Theorem~\ref{intropgp}. In particular, this means that if $\chi(1)$ is a divisor of $\norm{G}$ for each $\chi\in\mathrm{Ch}(\mathsf{S})$ and $\mathsf{T}\preccurlyeq \mathsf{S}$, then $\chi(1)$ is a divisor of $\norm{G}$ for each $\chi\in\mathrm{Ch}(\mathsf{T})$. The same statement also holds for superclass sizes. Therefore, to construct all supercharacter theories of a $p$-group efficiently using a computer program, it could be advantageous to find coarse supercharacter theories $\mathsf{S}$ for which $G$ is $\mathsf{S}$-nilpotent.
\end{rem}

We conclude this section with another consequence to and Proposition~\ref{baer} and Theorem~\ref{intropgp}; this is a purely group theoretic fact that we may prove easily using our results about $\mathsf{S}$-nilpotence. This fact also follows immediately from a theorem of Baer, so we present a group theoretic proof afterward.

\begin{prop}
Let $G$ be a finite group, and $N\lhd G$. Assume that $N$ and $G/N$ are nilpotent. Additionally, assume that for every prime divisor $p$ of $G$, and for every element $g\in N$ of $p$-power order, $\mathrm{cl}_G(g)$ has $p$-power size. Then $G$ is nilpotent. 
\end{prop}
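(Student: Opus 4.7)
The plan is to apply the supercharacter-theoretic framework to the finest supercharacter theory $\mathsf{m}(G)$. Since the $\mathsf{m}(G)$-classes are just the ordinary $G$-conjugacy classes, one has $\mathbf{Z}(\mathsf{m}(G))=\mathbf{Z}(G)$ and $[G,\mathsf{m}(G)]=[G,G]$, and therefore the upper and lower $\mathsf{m}(G)$-central series of $G$ coincide with the usual upper and lower central series. In particular, $G$ is nilpotent if and only if $G$ is $\mathsf{m}(G)$-nilpotent, so it suffices to establish the latter.

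By Proposition~\ref{baer} applied with $\mathsf{S}=\mathsf{m}(G)$ and the given $\mathsf{m}(G)$-normal subgroup $N$, I only need to show that $N$ is $\mathsf{m}(G)_N$-nilpotent and that $G/N$ is $\mathsf{m}(G)_{G/N}$-nilpotent. For the quotient side, note that the image of a $G$-conjugacy class under the canonical projection $\pi\colon G\to G/N$ is precisely the $G/N$-conjugacy class of $\pi(g)$, so $\mathsf{m}(G)_{G/N}=\mathsf{m}(G/N)$; the hypothesis that $G/N$ is nilpotent is then exactly $\mathsf{m}(G/N)$-nilpotence.

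For the subgroup $N$, I would use that $N$ is nilpotent, so that $N=\prod_p N_p$ decomposes as the internal direct product of its Sylow subgroups. Each $N_p$ is characteristic in $N$ and hence normal in $G$, and so is in particular a union of $G$-conjugacy classes, i.e.\ $\mathsf{m}(G)$-normal. For each prime $p$ dividing $|N|$, I apply Theorem~\ref{intropgp} to the $p$-group $N_p$ equipped with the induced supercharacter theory $\mathsf{m}(G)_{N_p}$, whose classes are exactly the $G$-conjugacy classes of elements of $N_p$. By hypothesis, every such class has $p$-power size, and since each such class is contained in $N_p$, its cardinality is bounded above by $|N_p|$; two $p$-powers related this way automatically divide one another, so condition {\bf(3)} of Theorem~\ref{intropgp} holds for $(N_p,\mathsf{m}(G)_{N_p})$, giving that $N_p$ is $\mathsf{m}(G)_{N_p}$-nilpotent. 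A finite induction using Proposition~\ref{firstfitting} (first to a pair $N_p,N_q$, then successively extending over the primes dividing $|N|$) then yields that $N=\prod_p N_p$ is $\mathsf{m}(G)_N$-nilpotent, which closes the argument via Proposition~\ref{baer}.

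The argument is essentially bookkeeping once the right framework is in place. The only points that demand care are the explicit identifications $\mathsf{m}(G)_{G/N}=\mathsf{m}(G/N)$ and $\mathsf{m}(G)_{N_p}=(\mathsf{m}(G)_N)_{N_p}$, together with the simple observation that a $p$-power class size contained inside a $p$-group must divide the order of that $p$-group; beyond these verifications, Theorem~\ref{intropgp} and Proposition~\ref{firstfitting} do all the work, so I do not anticipate any serious obstacle.
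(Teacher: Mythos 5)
Your proposal is correct and follows essentially the same route as the paper's supercharacter-theoretic proof: reduce via Proposition~\ref{baer} to showing $N$ is nilpotent for the induced theory, decompose $N$ into its Sylow subgroups (each characteristic in $N$, hence $\mathsf{m}(G)$-normal), apply Theorem~\ref{intropgp} to each using the $p$-power class-size hypothesis, and reassemble with Proposition~\ref{firstfitting}. Your extra care with the identifications $\mathsf{m}(G)_{G/N}=\mathsf{m}(G/N)$ and the divisibility of $p$-power class sizes inside $N_p$ only makes explicit what the paper leaves implicit.
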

\begin{proof}[Proof via supercharacter theory]
To prove this, it suffices to show that $N$ is $\mathsf{m}_G(N)$-nilpotent. Let $p_i$, $1\le i\le \ell$ be the distinct prime divisors of $\norm{N}$, and let $P_i$ be the unique Sylow $p_i$-subgroup of $N$ for each $i$. Since $P_i\Char N$ for each $i$, it follows that $P_i\lhd G$ for each $i$. In particular, this means that each $P_i$ is $\mathsf{m}_G(N)$-normal. Since $\mathrm{cl}_G(g)$ has $p_i$-power size for each $i$, and for each element $g\in N$ of $p_i$-power size, it follows that $P_i$ is $\mathsf{m}_G(P_i)$-nilpotent for each $i$ by Theorem~\ref{intropgp}. Hence $N=\prod_i P_i$ is $\mathsf{m}_G(N)$-nilpotent, by Proposition~\ref{firstfitting}.
\end{proof}

\begin{proof}[Proof via group theory]
Let $\zeta_\infty(G)$ denote the hypercenter of $G$. By a result of Baer \cite[Theorem 3]{RB53}, the condition that all $p$-elements of $N$ belong to $G$-classes of $p$-power size is equivalent to $N$ containing $\zeta_\infty(G)$. This means that $G/\zeta_\infty(G)$ is a quotient of $G/N$, so must be nilpotent as $G/N$ is nilpotent. It follows that $\zeta_\infty(G)=G$, and $G$ is nilpotent.
\end{proof}

\section{Algebra groups}
We will now shift our focus to a particular subset of $p$-groups called algebra groups. Let $A$ be a finite-dimensional, unital, associative $\mathbb{F}$-algebra and let $p=\Char(\mathbb{F})$. Set $J={\bf J}(A)$, the Jacobson radical of $A$. Then $J$ is a nilpotent $\mathbb{F}$-algebra, and $G=\{1+x:x\in J\}$ is a subgroup of the group of units of $A$. A group that arises in this way is called an $\mathbb{F}$-algebra group. In this section, we assume that $\mathbb{F}$ is the field $\mathbb{F}_q$, where $q$ is a power of the prime $p$, and write algebra group to mean $\mathbb{F}_q$-algebra group. In \cite{ID07}, Diaconis and Isaacs construct a supercharacter theory $\mathsf{S}$ for any algebra group $G$ and it turns out that this supercharacter theory always lies in $\mathrm{SCT}_{\mathsf{nil}}(G)$. In this section, we will find a ring-theoretic description of the lower and upper $\mathsf{S}$-central series.

The group $G$ acts on $J$ on the left and the right, and these compatible actions give rise to the two-sided orbits
\[GrG=\{xry:x,y\in G\},\ r\in J\]
which partition $J$. So the collection of sets 
\[K_g=\{1+x(g-1)y:x,y\in G\}\ \,\text{for $g\in G$}\]
partition $G$. It is proven in \cite{ID07} that these sets give the superclasses of a supercharacter theory of $G$. We will call this the {\bf double orbit} supercharacter theory of $G$. 

The supercharacters of the double orbit supercharacter theory are defined dually. Recall that the dual space $J^\ast$ of $J$  is the space of linear functionals $J\to\mathbb{F}_q$. Since $G$ acts on $J$, $G$ also acts on $J^\ast$ from the left and the right by $(g.\mu)(x)=\mu(xg)$ and $(\mu.g)(x)=\mu(gx)$ for every $g\in G$ and $\mu\in J^\ast$. Fix a nontrivial homomorphism $\vartheta:\mathbb{F}_q^+\to\mathbb{C}^\times$. The supercharacter $\chi_\lambda$ associated to the two sided orbit $G\lambda G$ of $\lambda\in J^\ast$ is defined by
\[\chi_\lambda(g)=\frac{\norm{G\lambda}}{\norm{G\lambda G}}\sum_{\mu\in G\lambda G}(\vartheta\circ\mu)(g-1)\]
for each $g\in G$. However, since we have
\[\chi_\lambda=\frac{\norm{G\lambda}}{\norm{G\lambda G}}\sigma_{\mathrm{Irr}(\chi_\lambda)}\]
by \cite[Equation 3.8]{CA08}, we instead define the supercharacters of this theory by
\[\{\norm{G\lambda G}/\norm{G\lambda}\chi_\lambda:\lambda\in J^\ast\}\]
to be consistent with our definition of supercharacter.

Before discussing the central series associated to the double orbit supercharacter theory of an algebra group $G$, we need the following result. We remark that statement {\bf(2)} below is due to Isaacs \cite[Lemma 3.2]{MI95}. We include the proof though, as it sheds light on what the isomorphism is.

\begin{lem}\label{algebragps}Let $J$ be a finite dimensional, associative, nilpotent $\mathbb{F}_q$-algebra. Let $I\subseteq J$ be an ideal of $J$ that is an $\mathbb{F}_q$-subspace. If $G=1+J$, and $\mathsf{S}$ is the double orbit supercharacter theory of $G$, then 
\begin{enumerate}[label = {\bf(\arabic*)}]\openup 5pt
\item the subgroup $N=1+I$ is an algebra group and $N\lhd_{\mathsf{S}}G$;
\item the quotient group $G/N$ is an algebra group, and $G/N$ is naturally isomorphic to $1+J/I$;
\item the double orbit supercharacter theory of $G/N$ is $\mathsf{S}_{G/N}$.
\end{enumerate}
\end{lem}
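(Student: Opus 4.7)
The plan is to handle the three parts in order, with parts (1) and (2) setting up the algebraic framework and (3) reducing to a direct comparison of superclass partitions under the natural isomorphism.

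For (1), the first step is to observe that $I$, being a two-sided ideal of the nilpotent algebra $J$, is itself a nilpotent associative $\mathbb{F}_q$-algebra, so $N = 1 + I$ is an algebra group. To see that $N \lhd_{\mathsf{S}} G$, I would take an arbitrary $g = 1 + r \in N$ (so $r \in I$) and examine its superclass $K_g = \{1 + x(g-1)y : x, y \in G\} = \{1 + xry : x,y \in G\}$. Writing $x = 1 + a$ and $y = 1 + b$ with $a, b \in J$, the element $xry = (1+a)r(1+b) = r + ar + rb + arb$ lies in $I$ because $I$ is a two-sided ideal of $J$. Hence $K_g \subseteq N$, showing $N$ is a union of $\mathsf{S}$-classes.

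For (2), I would define the map $\varphi : G \to 1 + J/I$ by $\varphi(1+r) = 1 + (r + I)$. A short check shows $\varphi$ is a surjective group homomorphism with kernel exactly $N = 1 + I$, yielding the isomorphism $G/N \cong 1 + J/I$. Since $J/I$ is a nilpotent associative $\mathbb{F}_q$-algebra (the quotient of the nilpotent algebra $J$ by an ideal), the group $1 + J/I$ is an algebra group.

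For (3), the goal is to verify that under the isomorphism $\varphi$, the $\mathsf{S}_{G/N}$-classes coincide with the superclasses of the double orbit supercharacter theory $\mathsf{T}$ of $1 + J/I$. By definition, $\mathsf{S}_{G/N}$-classes are images $\varphi(K_g)$ of $\mathsf{S}$-classes. Taking $g = 1 + r$, we have
\[\varphi(K_g) = \{1 + (xry + I) : x, y \in G\}.\]
On the other hand, the $\mathsf{T}$-superclass of $\varphi(g)$ is $\{1 + \bar{x}(r + I)\bar{y} : \bar{x}, \bar{y} \in 1 + J/I\}$. The surjectivity of $\varphi$ means every $\bar{x}$ has the form $\varphi(x)$ for some $x \in G$, so expanding $(1 + a + I)(r + I)(1 + b + I)$ for $a, b \in J$ recovers exactly $\{(1+a)r(1+b) + I : a, b \in J\} = \{xry + I : x, y \in G\}$. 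The two partitions therefore coincide, and since a supercharacter theory is determined by its class partition, $\mathsf{S}_{G/N} = \mathsf{T}$.

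I do not expect a serious obstacle: the main subtlety is bookkeeping in (3), making sure that applying $\varphi$ to the double orbit $\{1 + xry : x, y \in G\}$ gives the same set as the double orbit of $\varphi(g)$ taken intrinsically inside $1 + J/I$. This hinges on the fact that $\varphi$ maps $G$ onto $1 + J/I$ surjectively, so the two families of ``twisting'' elements match up.
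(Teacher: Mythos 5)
Your proof is correct and follows essentially the same route as the paper: direct verification that $I$ being a two-sided ideal forces $K_g\subseteq N$ for $g\in N$, the restriction of the canonical projection $A\to A/I$ to $G$ as the isomorphism $G/N\cong 1+J/I$, and the observation that this homomorphism carries double orbits to double orbits. If anything, your expansion $xry=r+ar+rb+arb\in I$ in part (1) is cleaner than the paper's displayed manipulation.
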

\begin{proof}
By the definition of $N$, $N$ is an algebra group. Since for any $g\in N$, and $x,y\in G$, we have
\begin{align*}
x(g-1)y&=(1+x-1)(g-1)(1+y-1)\\
&=(g-1)[1+(y-1)+(x-1)+(x-1)(y-1)]\in I,
\end{align*}
it follows that $N\lhd_{\mathsf{S}}G$.

Since $I$ is an ideal of $J$ and is an $\mathbb{F}_q$-subspace, it follows that $I$ is an ideal of $A=\mathbb{F}_q+J$. Therefore $J/I=\mathbf{J}(A/I)$, so $1+J/I$ is an algebra group. Let $\upsilon:A\to A/I$ be the canonical projection. Then $\upsilon\vert_G$ is a group homomorphism with kernel $1+I=N$, which proves {\bf(2)}.

Write $\pi=\upsilon\vert_G:G\to G/N$. Then every $\mathsf{S}_{G/N}$-class has the form $\pi(K_g)$ for some $g\in G$. Since $\pi$ is a homomorphism, it follows that $\pi(K_g)$ is the double orbit superclass of $(g-1)I$, and statement {\bf(3)} holds. 
\end{proof}

Let $J$ be a finite-dimensional, associative, nilpotent algebra. For each $i\ge0$, $J^{i-1}/J^i\le\mathrm{Ann}(J/J^i)$, where $\mathrm{Ann}(I)$ denotes the two-sided annihilator of $I$. Moreover, since $J$ is nilpotent, $J^c=0$ for some integer $c$. A series of ideals $J\supseteq I_1\supseteq I_2\supseteq\dotsb\supseteq I_r=0$ satisfying $I_i/I_{i+1}\subseteq\mathrm{Ann}(J/I_{i+1})$ is known as an annihilator series of $J$. The series $J\supseteq J^2\supseteq J^3\supseteq\dotsb$ is called the lower annihilator series of $J$. Also $J$ is nilpotent if and only if $J^k=0$ for some integer $c$.

We define another sequence of ideals $\mathrm{Ann}_i(J)$ by $\mathrm{Ann}_0(J)=0$, and \[\mathrm{Ann}_{i+1}(J)/\mathrm{Ann}_i(J)=\mathrm{Ann}(J/\mathrm{Ann}_i(J)).\] This is another example of an annihilator series, called the upper annihilator series of $J$. Let $c$ be the maximal integer for which $J^c>0$. Since $J^c\subseteq\mathrm{Ann}(J)$, it follows that $J^c\mathrm{Ann}(J)\subseteq\mathrm{Ann}(J)$. Therefore we have 
\[J^{c-1}\mathrm{Ann}(J)/\mathrm{Ann}(J)\subseteq\mathrm{Ann}(J/\mathrm{Ann}(J))=\mathrm{Ann}_2(J)/\mathrm{Ann}(J),\]
and it follows that $J^{c-1}\le\mathrm{Ann}_2(J)$. Continuing this argument inductively, we have $J^{c+1-i}\subseteq\mathrm{Ann}_i(J)$, and so $J\subseteq\mathrm{Ann}_c(J)$, and the upper annihilator series terminates in $J$ in at most $c$ steps. 

If $r$ is the length of the upper annihilator series, then $J^2=J\mathrm{Ann}_r(J)\subset\mathrm{Ann}_{r-1}(J)$, since $J/\mathrm{Ann}_{r-1}(J)=\mathrm{Ann}(J/\mathrm{Ann}_{r-1}(J))$. Therefore, we must have $J^2\subseteq\mathrm{Ann}_{r-1}(J)$. Again, this inductive argument implies that $J^i\subseteq\mathrm{Ann}_{r-i+1}(J)$ for each $i\ge1$. Then $J^{r+1}=0$, so we must have $r=c$. In particular, this means that $J$ is nilpotent if and only if the upper annihilator series terminates in $J$, and this happens in the same number of steps as it takes $J^i$ to get to $0$.

These arguments are nearly identical to the ones for groups with central series. The analogy here is that group commutation is replaced by ring product and centralization is replaced by annihilation. Although the lower and upper central series of an algebra group do not match up perfectly with this analogy, they do if the notion of center and commutator subgroup are replaced with their supercharacter theoretic versions. 

\begin{thm}\label{upperloweralgebra}
Let $G=1+J$ be an algebra group, and let $\mathsf{S}$ be the double orbit supercharacter theory of $G$. Then the following hold.
\begin{enumerate}[label = {\bf(\arabic*)}]\openup 5pt
\item For each $i\ge1$, $\gamma_i(\mathsf{S})=1+J^i$.
\item For each $i\ge0$, $\zeta_i(\mathsf{S})= 1+\mathrm{Ann}_i(J)$.
\end{enumerate}
\end{thm}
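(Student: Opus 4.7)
The plan is to prove (1) and (2) separately by induction on $i$, leveraging Lemma~\ref{algebragps} to identify quotients of $G$ by ideal subgroups with smaller algebra groups carrying the double-orbit supercharacter theory.

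For (1), I would set up a forward induction on $i\ge 1$ with trivial base $\gamma_1(\mathsf{S})=G=1+J^1$. The inductive step reduces to showing $[1+J^i,\mathsf{S}]=1+J^{i+1}$. The easy direction $[1+J^i,\mathsf{S}]\subseteq 1+J^{i+1}$ follows from a direct expansion: for $g=1+x$ with $x\in J^i$ and $k=1+(1+a)x(1+b)$ with $1+a,1+b\in G$, one has $(1+a)x(1+b)-x=ax+xb+axb\in J^{i+1}$, and multiplying on the left by $(1+x)^{-1}\in 1+J$ preserves this.

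The reverse containment is the main obstacle. I plan to handle it via a separate downward induction on $i$, starting from $i$ large enough that $J^{i+1}=0$ (where the claim is trivial). Granting the downward hypothesis $[1+J^{i+1},\mathsf{S}]=1+J^{i+2}$, the key step is to show $1+ax\in[1+J^i,\mathsf{S}]$ for every $a\in J$ and $x\in J^i$. Taking $u=1+a$ and $v=1$ gives
\[
g^{-1}k \;=\; (1+x)^{-1}(1+x+ax) \;=\; 1+ax+z,
\]
where $z=\bigl((1+x)^{-1}-1\bigr)\cdot ax\in J\cdot J^{i+1}=J^{i+2}$. Since $1+z\in 1+J^{i+2}=[1+J^{i+1},\mathsf{S}]\subseteq[1+J^i,\mathsf{S}]$ by the hypothesis, writing $g^{-1}k=(1+ax)(1+w)$ with $w\in J^{i+2}$ and inverting $1+w$ places $1+ax$ in $[1+J^i,\mathsf{S}]$. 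A symmetric choice shows $1+xb\in[1+J^i,\mathsf{S}]$ for every $b\in J$. Because $(1+J^{i+1})/(1+J^{i+2})$ is abelian and additively isomorphic to $J^{i+1}/J^{i+2}$, which is spanned by classes of elements $ax$ and $xb$ (since $J^{i+1}=J\cdot J^i=J^i\cdot J$), products of these generators together with $1+J^{i+2}\subseteq[1+J^i,\mathsf{S}]$ exhaust $1+J^{i+1}$, completing this direction.

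For (2), I would induct forward on $i\ge 0$. The base $\zeta_0(\mathsf{S})=1=1+\mathrm{Ann}_0(J)$ is definitional. Given $\zeta_i(\mathsf{S})=1+\mathrm{Ann}_i(J)$, Lemma~\ref{algebragps} identifies $G/\zeta_i(\mathsf{S})$ with the algebra group $1+\bar J$ for $\bar J=J/\mathrm{Ann}_i(J)$, endowed with its own double-orbit supercharacter theory. The $\mathsf{S}$-class of $1+\bar x$ in this quotient is $\{1+(1+\bar a)\bar x(1+\bar b):\bar a,\bar b\in\bar J\}$; expanding shows it reduces to the singleton $\{1+\bar x\}$ exactly when $\bar a\bar x=\bar x\bar b=0$ for all $\bar a,\bar b\in\bar J$, that is, when $\bar x\in\mathrm{Ann}(\bar J)$. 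Hence $\mathbf{Z}(\mathsf{S}^{G/\zeta_i(\mathsf{S})})=(1+\mathrm{Ann}_{i+1}(J))/(1+\mathrm{Ann}_i(J))$, and pulling back via the defining recursion $\zeta_{i+1}(\mathsf{S})/\zeta_i(\mathsf{S})=\mathbf{Z}(\mathsf{S}^{G/\zeta_i(\mathsf{S})})$ gives $\zeta_{i+1}(\mathsf{S})=1+\mathrm{Ann}_{i+1}(J)$. This half is essentially mechanical once Lemma~\ref{algebragps} is in hand; the substantive work is the downward induction in (1).
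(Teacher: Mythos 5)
Your proposal is correct, and part \textbf{(2)} together with the forward containment $[1+J^i,\mathsf{S}]\subseteq 1+J^{i+1}$ in part \textbf{(1)} match the paper's argument essentially verbatim (same use of Lemma~\ref{algebragps}, same expansion $(1+\bar a)\bar x(1+\bar b)-\bar x=\bar a\bar x+\bar x\bar b+\bar a\bar x\bar b$ to identify the singleton classes with $\mathrm{Ann}(\bar J)$). Where you genuinely diverge is the reverse containment $1+J^{i+1}\le\gamma_{i+1}(\mathsf{S})$. The paper gets this in one line by citing Marberg's structural result (\cite[Proposition 3.2]{EM11}) that every $\mathsf{S}$-normal subgroup of an algebra group has the form $1+M$ with $M$ a multiplicatively closed $\mathbb{F}_p$-subspace of $J$; since $\gamma_{i+1}(\mathsf{S})$ is $\mathsf{S}$-normal and the subspace $\{g-1:g\in\gamma_{i+1}(\mathsf{S})\}$ contains all products $ab$ with $a\in J$, $b\in J^i$, it contains their additive span $J^{i+1}$. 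You instead run a self-contained downward induction: the identity $g^{-1}k=(1+ax)(1+w)$ with $w\in J^{i+2}$, combined with the hypothesis $1+J^{i+2}=[1+J^{i+1},\mathsf{S}]\subseteq[1+J^i,\mathsf{S}]$, puts each generator $1+ax$ into $[1+J^i,\mathsf{S}]$, and the abelian quotient $(1+J^{i+1})/(1+J^{i+2})\simeq J^{i+1}/J^{i+2}$ converts group products into additive spans. Both arguments are sound; the paper's is shorter given the external citation, while yours is more elementary and makes explicit (rather than outsourcing to Marberg) why closure under group multiplication recovers the full $\mathbb{F}_q$-span. The only cosmetic remark is that your downward induction proves the statement $[1+J^i,\mathsf{S}]=1+J^{i+1}$ for all $i$ independently of the identification $\gamma_i(\mathsf{S})=1+J^i$, which is fine since the commutator construction applies to any subgroup; just make sure this logical separation from the forward induction is stated cleanly.
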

\begin{proof}
First, we prove {\bf(1)} by induction on $i$. Since $1+J=G=\gamma_1(\mathsf{S})$, the base case holds. Now assume that $\gamma_i(\mathsf{S})=1+J^i$ for some $i\ge 1$. Note that $\gamma_{i+1}(\mathsf{S})$ is generated by the set $1+\mathcal{G}_i$, where
\begin{align*}\mathcal{G}_i&=\{g^{-1}-1+g^{-1}x(g-1)y:g\in\gamma_{i+1}(\mathsf{S}),\ x,y\in G\}\\
&=\{g-1-x(g-1)y:g\in\gamma_{i+1}(\mathsf{S}),\ x,y\in G\}\\
&=\{-b+(1+a)b(1+c):b\in J^i,\ b,c\in J\}\\
&=\{ab+ac+abc:b\in J^i,\ a,c\in J\}\subseteq J^{i+1}.
\end{align*}
Therefore, we have $\gamma_{i+1}(\mathsf{S})\le 1+J^{i+1}$.  Since $\gamma_i(\mathsf{S})$ is $\mathsf{S}$-normal, \cite[Proposition 3.2]{EM11} guarantees that the set $\mathcal{H}=\{g-1:g\in\gamma_i(\mathsf{S})\}$ is an $\mathbb{F}_p$-algebra. Therefore, since $J^{i+1}$ is generated by elements of the form $ab$, where $a\in J$ and $b\in J^i$, and the set of these elements is contained in $\mathcal{G}_i\subseteq\mathcal{H}$, we must also have $1+J^{i+1}\le\gamma_{i+1}(\mathsf{S})$.

We next prove {\bf(2)} by induction on $i$. Again, the base case is trivial, so assume that for some $i\ge0$, $\zeta_i(\mathsf{S})= 1+\mathrm{Ann}_i(J)$. Write $Z=\zeta_i(\mathsf{S})$. Note that $G/Z$ is naturally isomorphic to $1+J/\mathrm{Ann}_i(J)$, and $\mathsf{S}_{G/Z}$ is the double orbit supercharcter theory of $G/Z$, by Lemma~\ref{algebragps}. Let $\varphi:G/Z\to1+J/\mathrm{Ann}_i(J)$ be the isomorphism $gZ\mapsto 1+(g-1)\mathrm{Ann}_i(J)$. Then $\varphi(\mathbf{Z}(\mathsf{S}_{G/Z}))$ is exactly the set
\[
1+\{g\in J/\mathrm{Ann}_i(J): (1+a)g(1+b)=g\ \,\text{for all $a,b\in J/\mathrm{Ann}_i(J)$}\},
\]
which is $1+\mathrm{Ann}(J/\mathrm{Ann}_i(J))$.
Therefore \[\zeta_{i+1}(\mathsf{S})/\zeta_i(\mathsf{S})=\varphi^{-1}\bigl(1+\mathrm{Ann}(J/\mathrm{Ann}_i(J))\bigr)=(1+\mathrm{Ann}_{i+1}(J))/(1+\mathrm{Ann}_i(J)),
\]
and it follows that $\zeta_{i+1}(\mathsf{S})=1+\mathrm{Ann}_{i+1}(J)$, as desired.
\end{proof}
The following is an immediate corollary to Theorem~\ref{upperloweralgebra}.
\begin{cor}\label{doubleorbitsize}The degrees of all of the supercharacters and the sizes of all of the superclasses of the double orbit supercharacter theory of an $\mathbb{F}_q$-algebra are powers of the prime $p$, where $p$ is the characteristic of $p$. 
\end{cor}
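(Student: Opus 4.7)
The plan is to combine Theorem~\ref{upperloweralgebra} with the general divisibility results for $\mathsf{S}$-nilpotent groups (Theorem~\ref{cc} / Proposition~\ref{snildiv}) and then exploit the fact that an $\mathbb{F}_q$-algebra group has order a power of $p$.

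More specifically, I would first observe that since $J$ is nilpotent, there exists a positive integer $c$ with $J^c = 0$. By part {\bf(1)} of Theorem~\ref{upperloweralgebra}, this gives $\gamma_c(\mathsf{S}) = 1 + J^c = 1$, so the lower $\mathsf{S}$-central series of $G$ terminates at the trivial subgroup. Hence $G$ has an $\mathsf{S}$-central series and is therefore $\mathsf{S}$-nilpotent. (One could equivalently use part {\bf(2)} to note that $\zeta_c(\mathsf{S}) = 1 + \mathrm{Ann}_c(J) = G$.)

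Next, since $G = 1 + J$ is an $\mathbb{F}_q$-algebra group with $\dim_{\mathbb{F}_q} J = n$, we have $\norm{G} = q^n$, a power of $p$. Applying Theorem~\ref{cc} (whose content is precisely Proposition~\ref{snildiv}), we conclude that $\chi(1)$ divides $\norm{G}$ for each $\chi \in \mathrm{Ch}(\mathsf{S})$ and $\norm{\mathrm{cl}_{\mathsf{S}}(g)}$ divides $\norm{G}$ for each $g \in G$. Since every divisor of $q^n$ is a power of $p$, the result follows immediately.

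There is essentially no obstacle here: the entire content of the corollary has already been proved, and the argument is purely a bookkeeping exercise invoking Theorem~\ref{upperloweralgebra} to get $\mathsf{S}$-nilpotence and then feeding this into the existing divisibility theorem for $\mathsf{S}$-nilpotent groups. The only small step requiring attention is making explicit the (standard) fact that $\norm{1+J} = \norm{J} = q^{\dim_{\mathbb{F}_q} J}$, which is why $G$ is a $p$-group and hence why divisors of $\norm{G}$ are $p$-powers.
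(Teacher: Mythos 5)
Your proposal is correct and follows essentially the same route the paper intends: the paper states the corollary is immediate from Theorem~\ref{upperloweralgebra}, precisely because $J^c=0$ forces the lower $\mathsf{S}$-central series to terminate, giving $\mathsf{S}$-nilpotence, after which Theorem~\ref{cc} yields divisibility of degrees and superclass sizes into $\norm{G}=q^n$. Your write-up just makes the bookkeeping explicit, and is fine.
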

We remark that a statement stronger than Corollary~\ref{doubleorbitsize} is already known to be true. Specifically, if $\chi=\norm{G\lambda G}/\norm{G\lambda}\chi_\lambda$, then $\chi(1)=\norm{G\lambda G}$, which is a $q$-power by \cite[Lemma 4.3]{ID07}. Also the size of each superclass is a $q$-power, by \cite[Corollary 3.2]{ID07}.

If $G=1+J$ is an algebra group, an algebra subgroup of $G$ is a subgroup of the form $1+I$, where $I$ is a multiplicatively closed $\mathbb{F}_q$-subspace of $J$. In case $I$ is an ideal of $A=\mathbb{F}_q+J$, $1+I$ is called an ideal subgroup of $G$. In \cite[Proposition 3.2]{EM11}, Marberg shows that any $\mathsf{S}$-normal subgroup of $G$ has the form $1+M$, where $M$ is a multiplicatively closed $\mathbb{F}_p$-subspace of $J$ and $\mathsf{S}$ is the double orbit supercharacter theory of $G$. Since every $\mathsf{S}$-chief factor has order $p$, it follows that $G$ has an $\mathsf{S}$-normal subgroup $H$ of order $d$ with this form for every divisor $d$ of $\norm{G}$. It turns out that there are ideal subgroups among these for every $q$-power dividing $\norm{G}$. This is Theorem~\ref{idealsubs}. 

\begin{proof}[Proof of Theorem~\ref{idealsubs}]
Let $1\le m\le n$. Since $\gamma_0(\mathsf{S})=1$ and $\gamma_c(\mathsf{S})=G$ for some integer $c$, we may choose $i$ such that $\norm{\zeta_i(\mathsf{S})}\le q^m < \norm{\zeta_{i+1}(\mathsf{S})}$. If $q^m=\norm{\zeta_i(\mathsf{S})}$, then we are done, so assume that $\norm{\zeta_i(\mathsf{S})}< q^m < \norm{\zeta_{i+1}(\mathsf{S})}$, and recall that $\zeta_j(\mathsf{S})=1+\mathrm{Ann}_j(J)$ for all $j$, and $\zeta_{i+1}(\mathsf{S})/\zeta_i(\mathsf{S})\simeq \mathrm{Ann}_{i+1}(J)/\mathrm{Ann}_i(J)$. Let $M/\mathrm{Ann}_i(J)$ be an $\mathbb{F}_q$-subspace of $\mathrm{Ann}_{i+1}(J)/\mathrm{Ann}_i(J)$ of order $q^m$. Since $\mathrm{Ann}_{i+1}(J)/\mathrm{Ann}_i(J)\subseteq\mathrm{Ann}(J/\mathrm{Ann}_i(J))$, it follows that $M/\mathrm{Ann}_i(J)$ is an ideal of $J/\mathrm{Ann}_i(J)$. Therefore, $M$ is an ideal of $J$ that is an $\mathbb{F}_q$-subspace, so $1+M\lhd_{\mathsf{S}}G$ by Lemma~\ref{algebragps}.
\end{proof}
\bibliographystyle{alpha}
\bibliography{bio}
\end{document}